   \theoremstyle{plain}%default
   \newtheorem{thm}{Theorem}[section]
   \newtheorem{prop}[thm]{Proposition}
   \newtheorem{lemma}[thm]{Lemma}  
   \newtheorem{cor}[thm]{Corollary}
   \theoremstyle{defn}
    \newtheorem{defn}[thm]{Definition}
   \theoremstyle{remark}
   \newtheorem{remark}[thm]{Remark}
   \newtheorem{example}[thm]{Example}
\newtheorem{ack}[thm]{Acknowledgement}
   \numberwithin{equation}{section}
\author{Johannes Christensen}
\address{Institut for Matematik, Aarhus University,  Denmark}
\title{KMS states on the Toeplitz algebras of higher-rank graphs}
\begin{document}

\begin{abstract}
The Toeplitz algebra $\mathcal{T}C^{*}(\Lambda)$ for a finite $k$-graph $\Lambda$ is equipped with a continuous one-parameter group $\alpha^{r}$ for each $ r\in \mathbb{R}^{k}$, obtained by composing the map $\mathbb{R} \ni t \to (e^{itr_{1}}, \dots , e^{itr_{k}}) \in \mathbb{T}^{k}$ with the gauge action on $\mathcal{T}C^{*}(\Lambda)$.  In this paper we give a complete description of the $\beta$-KMS states for the $C^{*}$-dynamical system $(\mathcal{T}C^{*}(\Lambda), \alpha^{r})$ for all finite $k$-graphs $\Lambda$ and all values of $\beta \in \mathbb{R}$ and $r\in \mathbb{R}^{k}$. 
\end{abstract}

\maketitle

\section{Introduction}
%Introduction to the subject
The structure of KMS-states for the gauge action or a generalised gauge action on the $C^{*}$-algebra of a directed graph has revealed a complex structure, even for finite graphs \cite{AA1, CT}. The same is true for the Toeplitz $C^{*}$-algebra of a finite directed graph \cite{AA1, AA2}, and it is therefore natural to seek for similar results for the canonical actions on the $C^{*}$-algebra and the Toeplitz $C^{*}$-algebra of a finite higher-rank graph \cite{KP}.

%review of known results
For the Toeplitz algebra of a finite strongly connected $k$-graph $\Lambda$ without sources and sinks the simplex of KMS states was described in \cite{aHLRS} for a specific dynamics defined using the vertex matrices of $\Lambda$ (this dynamics is called "preferred" in \cite{aHLRS}). Since then there has been contributions from a handful of papers where the objective has been to describe the KMS states for more general graphs and more general continuous one-parameter groups. The most recent contribution is \cite{FaHR}, where the authors describe an algorithm for determining the $\beta$-KMS simplex on the Toeplitz algebra of a finite $k$-graph $\Lambda$ and for a continuous one-parameter group defined using a vector $r\in \mathbb{R}^{k}$ subject to the conditions:
\begin{enumerate}
\item $\Lambda$ has no sinks and no sources.
\item $\beta >0$, $r \in (0, \infty)^{k}$ and $r$ has rationally independent coordinates.
\item There are no trivial strongly connected components and no isolated subgraphs in $\Lambda$.
\item For all components $C$ in $\Lambda$ the graph restricted to $C$, $\Lambda_{C}$, is coordinatewise irreducible and each vertex matrix for $\Lambda$ restricted to a component $C$ has spectral radius greater than $1$.
\item If two components $C$ and $D$ are connected by an edge of any color in the skeleton of $\Lambda$, then they are connected by edges of all $k$ colors.
\end{enumerate}

%Purpose of the article
The aim of this paper is to remove all of these conditions. We will describe the simplex of $\beta$-KMS states on the Toeplitz algebra $\mathcal{T}C^{*}(\Lambda)$ for the continuous one-parameter group $\alpha^{r}$ for all values of $\beta\in \mathbb{R}$, $r\in \mathbb{R}^{k}$ and all finite $k$-graphs $\Lambda$. Our results reveal that some of the above restrictions imposed in \cite{FaHR} greatly reduce the size and complexity of the simplex of $\beta$-KMS states, for example they imply that the simplex is finite-dimensional, while our more general approach reveals the existence of KMS-simplexes with uncountably many extreme points.  Furthermore our description does not involve any repeated algorithm, and we believe that this makes it much easier to use in concrete calculations.

%Outline
To describe the KMS states on $\mathcal{T}C^{*}(\Lambda)$ for a finite $k$-graph $\Lambda$ we proceed as follows: In section \ref{background} we present the theory on higher-rank graphs, groupoids and $C^{*}$-dynamical systems that we will need in the paper. Section \ref{linearalgebra} is devoted to a linear algebraic result concerning vectors that are sub-invariant under a family of commuting matrices. In section \ref{gaugeinvariant} we use the general result from section \ref{linearalgebra} to describe a bijection between certain vectors over $\Lambda^{0}$ and gauge-invariant KMS states on the Toeplitz algebra. We then proceed in section \ref{decompofKMS} to describe a decomposition of the gauge-invariant KMS states, which in section \ref{nongauge} allows us to use the theory developed in \cite{C} to describe all KMS states. To illustrate our results we use section \ref{ex} to present a few examples and compare our results with the literature.

%Techniques as in C
The techniques and approach in this paper are similar to the ones used in \cite{C} to describe the KMS states on the Cuntz-Kriger algebras of finite higher-rank graphs without sources, and especially the analysis in section \ref{nongauge} that describes the non gauge-invariant KMS states is heavily inspired by ideas in \cite{C}. The description of the gauge-invariant KMS states uses many ideas and techniques already described in the literature on the subject (e.g. in \cite{aHKR}, \cite{aHLRS1} and \cite{aHLRS}). We do however find the new insight obtained regarding gauge-invariant KMS states both interesting and non-trivial, and we consider this the main contribution of this paper.

\section{Background}\label{background}

\subsection*{Higher-rank graphs and their Toeplitz algebra}
We will in the following summarise our notation and conventions on higher-rank graphs. For an in-depth treatment we refer the reader to \cite{R, KP}. Throughout $\mathbb{N}$ denotes the natural numbers including zero. For $k \in \mathbb{N}$ with $k\geq 1$ we write $\{e_{1}, \dots , e_{k}\}$ for the standard generators for $\mathbb{N}^{k}$ and for $n,m \in \mathbb{N}^{k}$ we write $n \vee m$ for the pointwise maximum of $n$ and $m$. A higher-rank graph $(\Lambda, d)$ of rank $k \in \mathbb{N}$ with $k\geq 1$ is a pair consisting of a countable small category $\Lambda$ and a functor $d:\Lambda \to \mathbb{N}^{k}$ that has the factorisation property, i.e. if $d(\lambda)=n+m$ for some $\lambda \in \Lambda$ and $n,m \in \mathbb{N}^{k}$, then there exists unique $\mu,\eta \in \Lambda$ with $d(\mu)=n$, $d(\eta)=m$ and $\lambda=\mu \eta$. We define $\Lambda^{n}:=d^{-1}(\{n\})$ for each $n\in \mathbb{N}^{k}$. The factorisation property guarantees that we can identify the objects of the category $\Lambda$ with $\Lambda^{0}$, and we call them vertexes. Likewise we think of elements $\lambda$ of $\Lambda$ as paths in a graph with degree $d(\lambda)$, and we use the range and the source maps $r,s : \Lambda \to \Lambda^{0}$ to make sense of the start $s(\lambda)$ and the end $r(\lambda)$ of our path. Some times we will write $\Lambda$ instead of $(\Lambda, d)$ and simply call it a $k$-graph, in which case it is implicit that $k\geq 1$. Whenever $X,Y \subseteq \Lambda$ we let $XY$ denote the set of composed paths, and we use the usual conventions for defining sets of paths, e.g. $v\Lambda w :=\{w\}\Lambda\{w\}$ for $v,w \in \Lambda^{0}$. For $I\subseteq \{1, \dots , k \}$ we set:
\begin{equation*}
\Lambda_{I}:=\{ \lambda \in \Lambda \ : \ d(\lambda)_{j}=0 \text{ for all } j \in \{1, \dots , k \} \setminus I \} 
\end{equation*}
When $I \neq \emptyset$ $\Lambda_{I}$ can then be considered as a $\lvert I \rvert$-graph by defining a $d':\Lambda_{I} \to \mathbb{N}^{\lvert I \rvert}$ in the obvious way, but to make the notation more fluid we will let the degree functor be the restriction of $d$ to $\Lambda_{I}$, i.e. we identify $\mathbb{N}^{\lvert I \rvert}$ with 
\begin{equation*}
\mathbb{N}^{I}:=\{ n \in \mathbb{N}^{k} \ : \ n_{j}=0 \text{ for } j \notin I\}
\end{equation*}
Keeping in line with this notation, we will identify $\mathbb{N}^{k}$ with $\mathbb{N}^{I}\oplus \mathbb{N}^{J}$ whenever we have a partition $I\sqcup J=\{1, \dots , k\}$, and write $d(x)=(d(x)_{I}, d(x)_{J})$. For each subset $I \subseteq \{1, \dots , k \}$ we can define a relation $\leq_{I}$ on $\Lambda^{0}$ by letting $v\leq_{I} w$ if $v\Lambda_{I} w :=\{w\}\Lambda_{I}\{w\}\neq \emptyset$, and we can then define an equivalence relation $\sim_{I}$ on $\Lambda^{0}$ by defining $v \sim_{I} w$ when $v\leq_{I} w$ and $w\leq_{I} v$. We write $\sim$ instead of $\sim_{\{1, \dots , k\}}$, and when there can be no confusion about which relation $\sim_{I}$ we refer to we call the equivalence classes  \emph{components}. When a graph only has one component in $\sim$ we call it \emph{strongly connected}. Our $k$-graph $\Lambda$ is \emph{finite} when $\Lambda^{n}$ is a finite set for each $n \in \mathbb{N}^{k}$, and without sources when for each $v \in \Lambda^{0}$ and $n\in \mathbb{N}^{k}$ there is a $\lambda\in \Lambda^{n}$ with $r(\lambda)=v$, i.e $v\Lambda^{n} \neq \emptyset$. If $\Lambda$ is a finite $k$-graph, then $\Lambda_{I}$ is a finite $\lvert I \rvert$-graph for each $I\subseteq \{1, \dots , k \}$ with $I \neq \emptyset $. For $I \subseteq \{1, \dots , k\}$ and $V \subseteq \Lambda^{0}$ we define the \emph{$I$-closure} $\overline{V}^{I}$ of $V$ and the \emph{hereditary $I$-closure} $\widehat{V}^{I}$ as:
\begin{equation*}
\overline{V}^{I}:= \{ w\in \Lambda^{0} \ :\exists v \in V, \ w \leq_{I}v \} \quad , \quad 
\widehat{V}^{I}:= \{ w\in \Lambda^{0} \ :\exists v \in V, \ v \leq_{I}w \}
\end{equation*}
We write $\overline{V}:=\overline{V}^{\{1, \dots , k\}}$ and $\widehat{V}:=\widehat{V}^{\{1, \dots , k\}}$ and call it the closure and the hereditary closure of $V$. Letting $M_{S}(\mathbb{F})$ be the set of matrices over the finite set $S$ with entries in $\mathbb{F}$, the \emph{vertex matrices} $A_{1}, \dots , A_{k} \in M_{\Lambda^{0}}(\mathbb{N})$ for a finite $k$-graph $\Lambda$ are the matrices with entries $A_{i}(v,w)=\lvert v\Lambda^{e_{i}}w\rvert$. They commute pairwise, and setting $A^{n}:=\prod_{i=1}^{k} A_{i}^{n_{i}}$ for $n\in \mathbb{N}^{k}$ it follows that $A^{n}(v,w)=\lvert v \Lambda^{n}w\rvert$.

For a finite $k$-graph $\Lambda$, a \emph{Toeplitz-Cuntz-Krieger $\Lambda$-family} consists of partial isometries $\{S_{\lambda}  : \ \lambda \in \Lambda\} $ subject to the conditions:
\begin{enumerate}
\item $\{p_{v}:=S_{v} \ : v\in \Lambda^{0}\}$ are mutually orthogonal projections.
\item When $\lambda , \mu \in \Lambda$ with $s(\lambda)=r(\mu)$ we have $S_{\lambda \mu}=S_{\lambda}S_{\mu}$.
\item $S_{\lambda}^{*} S_{\lambda} =p_{s(\lambda)}$ for every $\lambda \in \Lambda$.
\item  $p_{v} \geq \sum_{\lambda \in v\Lambda^{n}}S_{\lambda}S_{\lambda}^{*}$ for all $v \in \Lambda^{0}$ and $n \in \mathbb{N}^{k}$.
\item $S_{\mu}^{*}S_{\lambda}=\sum_{(\kappa, \eta) \in \Lambda^{\text{min}} (\mu, \lambda)} S_{\kappa}S_{\eta}^{*}$ for all $\mu , \lambda \in \Lambda$.
\end{enumerate}
where $\Lambda^{\text{min}} (\mu, \lambda):=\{ (\kappa, \eta)\in \Lambda \times \Lambda \ : \mu \kappa = \lambda \eta, \ d(\mu \kappa)=d(\mu)\vee d(\lambda) \}$, see e.g. \cite{Cynthia, AA4, RS}. The Toeplitz algebra $\mathcal{T}C^{*}(\Lambda)$ of $\Lambda$ is then the $C^{*}$-algebra generated by a universal Toeplitz-Cuntz-Krieger $\Lambda$-family. It follows from the definition of $\mathcal{T}C^{*}(\Lambda)$ that $\mathcal{T}C^{*}(\Lambda)=\overline{\text{span}}\{S_{\lambda}S_{\mu}^{*}\ : \ \lambda, \mu \in \Lambda\}$ and that we have a strongly continuous action $\gamma: \mathbb{T}^{k} \to \text{Aut}(\mathcal{T}C^{*}(\Lambda))$ with:
\begin{equation*}
\gamma_{z}(S_{\lambda})=z^{d(\lambda)} S_{\lambda} \qquad \text{ for all } z \in \mathbb{T}^{k} \text{ and } \lambda \in \Lambda
\end{equation*}
where $z^{d(\lambda)}:=\prod_{i=1}^{k} z_{i}^{d(\lambda)_{i}}$.

\subsection*{$C^{*}$-dynamical systems and KMS states}
In this paper a $C^{*}$-dynamical system is a pair $(\mathcal{A}, \alpha)$ consisting of a $C^{*}$-algebra $\mathcal{A}$ and a continuous one-parameter group $\alpha$, i.e. a strongly continuous representation of $\mathbb{R}$ in $\text{Aut}(\mathcal{A})$. An element $a\in \mathcal{A}$ is analytic for $\alpha$ when there is an analytic extension of the map $\mathbb{R}\ni t \to \alpha_{t}(a)\in \mathcal{A}$ to the entire complex plane $\mathbb{C}$, and we then denote the value of this map at $z \in \mathbb{C}$ as $\alpha_{z}(a)$. A $\beta$-KMS state for the $C^{*}$-dynamical system $(\mathcal{A}, \alpha)$ is a state $\omega$ on $\mathcal{A}$ satisfying:
\begin{equation*} 
\omega(xy)=\omega(y \alpha_{i \beta}(x))
\end{equation*}
for all elements $x,y$ in a norm dense, $\alpha$-invariant $*$-algebra of $\mathcal{A}$ consisting of analytic elements for $\alpha$.

For any $r\in \mathbb{R}^{k}$ we can compose the map $\mathbb{R}\ni t \to e^{itr}:=(e^{itr_{j}})_{j=1}^{k} \in\mathbb{T}^{k}$ with the gauge action $\gamma$ on $\mathcal{T}C^{*}(\Lambda)$ to obtain a continuous one-parameter group $\alpha^{r}$. For all $\lambda, \mu \in \Lambda$ the map:
\begin{equation*}
\mathbb{R} \ni t\to\alpha^{r}_{t}(S_{\lambda} S_{\mu}^{*})=e^{it r\cdot (d(\lambda)-d(\mu))}S_{\lambda} S_{\mu}^{*}
\end{equation*} 
has an analytic extension to $\mathbb{C}$, and hence $S_{\lambda} S_{\mu}^{*}$ is an analytic element for $(\mathcal{T}C^{*}(\Lambda), \alpha^{r})$.

\subsection*{Realising $\mathcal{T}C^{*}(\Lambda)$ as a groupoid $C^{*}$-algebra} 
We follow \cite{aHKR} when introducing the groupoid of the Toeplitz algebra, and for a more rigorous treatment we refer the reader to \cite{Cynthia, yeend}. We write  $n \leq m$ for elements $n,m \in (\mathbb{R}\cup \{\infty\})^{k}$ when $n_{i}\leq m_{i}$ for all $i \in \{1, \dots , k\}$ and $n\lneq m$ when $n\leq m$ and $n\neq m$, and we use the same notation for the relation restricted to the subsets $(\mathbb{N}\cup \{\infty\})^{k}$ and $\mathbb{N}^{k}$. $\Omega_{k}=\{ (p,q)\in \mathbb{N}^{k} \times \mathbb{N}^{k} \ : p\leq q\} $ is the standard example of a $k$-graph $\Lambda$ without sources, and for $n\in (\mathbb{N}\cup \{\infty\})^{k}$ we set $\Omega_{k,n}$ equal to the subgraph $\{ (p,q)\in \Omega_{k} \ : q\leq n\}$. For a finite $k$-graph $\Lambda$ and each $n\in (\mathbb{N}\cup \{\infty\})^{k}$ we let $\Lambda^{n}$ denote the set of degree preserving functors $x:\Omega_{k,n} \to \Lambda$ and set $d(x):=n$ and $r(x):=x(0,0)$. When $n$ has finite entries this set can be identified with $d^{-1}(\{n\})$, so the notation does not collide with the one already introduced. Let:
\begin{equation*}
\Lambda^{*}:= \bigcup_{n \in (\mathbb{N}\cup \{\infty \})^{k}} \Lambda^{n}
\end{equation*}
and define for each $\lambda \in \Lambda$ the cylinder set $Z(\lambda):=\{x\in \Lambda^{*} \ : x(0, d(\lambda))=\lambda \}$. For each finite set $F\subseteq s(\lambda)\Lambda$ set 
\begin{equation*}
Z(\lambda \setminus F):=Z(\lambda)\setminus\left( \bigcup_{\mu \in F}Z(\lambda \mu) \right).
\end{equation*}
The sets $Z(\lambda \setminus F)$ then form a basis of compact open sets for a second countable locally compact Hausdorff topology on $\Lambda^{*}$, and since $\Lambda^{*}=\bigcup_{v\in \Lambda^{0}}Z(v)$  it follows that $\Lambda^{*}$ is compact. Whenever we have a partition $I\sqcup J=\{1, \dots , k\}$ we write $\infty_{I}:=(\infty)_{i\in I}\in (\mathbb{N}\cup\{\infty\})^{I}$ for the element with $(\infty_{I})_{i}=\infty$ for all $i\in \mathbb{I}$ and for $m\in \mathbb{N}^{J}$ we set:
\begin{equation*}
\Lambda^{\infty_{I}, m} :=\{x\in \Lambda^{*} \ : \ d(x)=(\infty_{I}, m) \}
\end{equation*}
which is a Borel set by Proposition 3.2 in \cite{aHKR}, and we set $\partial^{I}\Lambda:=\bigcup_{m\in \mathbb{N}^{J}}\Lambda^{\infty_{I}, m}$. When $I =\emptyset$ then $\Lambda^{\infty_{I}, m}=\Lambda^{m}$ and $\partial^{I}\Lambda=\Lambda$. When $I=\{1, \dots , k\}$ then $\Lambda^{\infty}:=\Lambda^{\infty_{I}, 0}$ is the \emph{infinite path space} of $\Lambda$. It follows that we have a Borel partition of $\Lambda^{*}$, i.e.:
\begin{equation*}
\Lambda^{*}=\bigsqcup_{I \subseteq \{1, \dots , k\}} \partial^{I}\Lambda
\end{equation*}
For each $n \in \mathbb{N}^{k}$ the formula $\sigma^{n}(x)(p,q)=x(p+n, q+n)$ defines a map $\sigma^{n}$ on $\{ x\in \Lambda^{*}\ : d(x)\geq n\}$ which we call the \emph{shift map}. We can then define a groupoid $\mathcal{G}_{\Lambda}$ as:
\begin{equation*}
\mathcal{G}_{\Lambda}: = \{ (x, p-q,y) \in \Lambda^{*} \times \mathbb{Z}^{k} \times \Lambda^{*} \ : \ p\leq d(x), \ q\leq d(y) , \ \sigma^{p}(x) = \sigma^{q}(y) \}
\end{equation*} 
with the usual composition and inverse. We equip $\mathcal{G}_{\Lambda}$ with a topology such that it becomes a locally compact second countable Hausdorff \'etale groupoid, satisfying that the full groupoid $C^{*}$-algebra $C^{*}(\mathcal{G}_{\Lambda})$ is isomorphic to $\mathcal{T}C^{*}(\Lambda)$, that the unitspace $\mathcal{G}_{\Lambda}^{(0)}$ is isomorphic to $\Lambda^{*}$ with the topology generated by the sets $Z(\lambda \setminus F)$, and that $C(\Lambda^{*})\simeq \overline{\text{span}}\{ S_{\lambda}S_{\lambda}^{*} \ : \lambda \in \Lambda \}$ under an isomorphism that maps $1_{Z(\lambda)}\to S_{\lambda}S_{\lambda}^{*}$ for each $\lambda \in \Lambda$. Furthermore the continuous one-parameter group $\alpha^{r}$ corresponds to the one arising from the groupoid homomorphism $c_{r}(x,n,y)=r\cdot n$ in the groupoid picture of $\mathcal{T}C^{*}(\Lambda)$, and the topology on $\mathcal{G}_{\Lambda}$ makes the map $\Phi: \mathcal{G}_{\Lambda} \to \mathbb{Z}^{k}$ given by $\Phi(x,n,y):=n$ continuous. Since the definition of the topology on $\mathcal{G}_{\Lambda}$ is not crucial for our exposition, we refer the reader to Appendix B in \cite{aHKR} or \cite{yeend} for the details.

When considering the groupoid picture of $\mathcal{T}C^{*}(\Lambda)$ every $\beta$-KMS state $\omega$ on $\mathcal{T}C^{*}(\Lambda)$ for $\alpha^{r}$ gives rise to a Borel probability measure $m$ on $\Lambda^{*}$ by using the Riesz Representatiom Theorem on $\omega$ restricted to $C(\Lambda^{*})$. We say that this measure is \emph{the measure associated to $\omega$}, and by Theorem 1.3 in \cite{N} such measure are exactly the probability measures that are quasi-invariant with Radon-Nikodym cocycle $e^{-\beta c_{r}}$. Since any such measure restricted to an invariant Borel subset of $\Lambda^{*}$, i.e. a Borel set $B$ with $s(r^{-1}(B))=B$, is again a quasi-invariant measure with Radon-Nikodym cocycle $e^{-\beta c_{r}}$, it follows that the extremal quasi-invariant probability measure with Radon-Nikodym cocycle $e^{-\beta c_{r}}$ maps invariant sets of $\Lambda^{*}$ into $\{0,1\}$.

\section{Decomposition of sub-invariant vectors} \label{linearalgebra}
To decompose our KMS states it is necessary to decompose certain vectors over the set of vertexes, and since our solution to this problem is purely linear algebraic and works for very general sets and vectors, we have devoted this section to present it in its full generality. Regarding notation $\mathbb{R}_{+}=\{r\in \mathbb{R} \ : \ r\geq 0\}$ and we write $\prod_{i=1}^{l}H_{i} x$ for matrices $H_{i}$ and an expression $x$ to mean $H_{1}\cdots H_{l} x$. For any finite set $S$ we let $1_{S}\in M_{S}(\mathbb{R})$ denote the identity matrix.

\begin{defn}
Let $S$ be a finite set and $B_{1}, \dots , B_{k} \in M_{S}(\mathbb{R}_{+})$ be pairwise commuting, i.e. $B_{i}B_{j}=B_{j}B_{i}$ for all $i,j$. We say a vector $\psi\in [0, \infty[^{S}$ is sub-invariant for the family $\{B_{i} \}_{i=1}^{k}$ if:
\begin{equation} \label{eaiv}
\prod_{i \in I} (1_{S}-B_{i}) \psi \geq 0 \qquad \text{ for each subset } I \subseteq \{1, \dots , k\}.
\end{equation} 
\end{defn}

\begin{prop} \label{p32}
Let $S$ be a finite set, $B_{1}, \dots , B_{k} \in M_{S}(\mathbb{R}_{+})$ be pairwise commuting and $\psi$ be a sub-invariant vector for the family $\{B_{i}\}_{i=1}^{k}$. For each subset $I \subseteq \{1,2, \dots , k\}$ there exists a vector $h^{I}$ that is sub-invariant for the family $\{B_{i}\}_{i=1}^{k}$ such that:
\begin{enumerate}
\item\label{inv1} $B_{i}h^{I}=h^{I}$ for all $i \in I$.
\item \label{inv2}$\lim_{n \to \infty} B_{j}^{n}h^{I} =0$ for $j \in \{1, \dots , k\}\setminus I$.
\item \label{inv3}$\psi = \sum_{I \subseteq \{1,\dots , k\}} h^{I}$.
\end{enumerate}
Furthermore this decomposition is unique in the sense that there is only one family of sub-invariant vectors satisfying \ref{inv1}-\ref{inv3}.
\end{prop}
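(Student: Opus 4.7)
The idea is to build the $h^I$ via limits. For any sub-invariant $v$, set $P_i v := \lim_{n\to\infty} B_i^n v$; this limit exists coordinatewise because $(1_S - B_i)v \ge 0$ (sub-invariance with $J = \{i\}$) forces $B_i^n v$ to be a decreasing sequence of non-negative vectors. The candidate decomposition is then
\[
h^I := \prod_{i\in I} P_i \prod_{j\notin I}(1_S - P_j)\psi,
\]
where the order of the pairwise commuting factors is irrelevant; the iteration makes sense once we justify that each partial product is again sub-invariant.

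The engine of the proof is a preservation lemma: if $v$ is sub-invariant for $\{B_1,\dots,B_k\}$, then so are $P_i v$ and $v - P_i v$. Fix $J \subseteq \{1,\dots,k\}$. If $i \in J$, the identity $(1_S - B_i)P_i v = 0$ and commutativity give $\prod_{j\in J}(1_S - B_j)P_i v = 0$, and $\prod_{j\in J}(1_S - B_j)(v - P_i v) = \prod_{j\in J}(1_S - B_j)v \ge 0$ by sub-invariance of $v$. If $i \notin J$, set $w := \prod_{j\in J}(1_S - B_j)v \ge 0$; then $(1_S - B_i)w = \prod_{j\in J \cup \{i\}}(1_S - B_j)v \ge 0$, so $B_i^n w$ decreases to a non-negative limit $P_i w$, yielding $\prod_{j\in J}(1_S - B_j)P_i v = P_i w \ge 0$ and $\prod_{j\in J}(1_S - B_j)(v - P_i v) = w - P_i w \ge 0$.

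Iterating this lemma along the indices $1,\dots,k$ shows that each $h^I$ is sub-invariant, so in particular each lies in $[0,\infty)^S$. Summing the $2^k$ terms gives $\psi$ by the telescoping identity $\prod_i(P_i + (1_S - P_i)) = 1_S$, proving \ref{inv3}. Property \ref{inv1} is immediate from $B_i P_i = P_i$. For \ref{inv2} with $j_0 \notin I$, isolate that factor: collect the remaining commuting pieces into the sub-invariant vector $w := \prod_{i\in I} P_i \prod_{j\notin I,\, j\ne j_0}(1_S - P_j)\psi$, so that $h^I = (1_S - P_{j_0})w$ and
\[
B_{j_0}^n h^I \;=\; B_{j_0}^n w - B_{j_0}^n P_{j_0} w \;=\; B_{j_0}^n w - P_{j_0} w \;\xrightarrow{n\to\infty}\; 0,
\]
by the definition of $P_{j_0}$ applied to the sub-invariant $w$.

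For uniqueness, let $\{\tilde h^I\}$ be any family of sub-invariant vectors satisfying \ref{inv1}-\ref{inv3}. Conditions \ref{inv1} and \ref{inv2} force $P_j \tilde h^I = \tilde h^I$ when $j \in I$ and $P_j \tilde h^I = 0$ when $j \notin I$, so for every $J$,
\[
\prod_{j\in J} P_j\, \psi \;=\; \sum_I \prod_{j\in J} P_j\, \tilde h^I \;=\; \sum_{I \supseteq J} \tilde h^I.
\]
Möbius inversion on the subset lattice then yields $\tilde h^I = \sum_{J\supseteq I}(-1)^{\lvert J\setminus I \rvert}\prod_{j\in J} P_j\psi$, an expression depending only on $\psi$, so $\tilde h^I = h^I$. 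I expect the main obstacle to be the preservation lemma of paragraph two; once sub-invariance is known to propagate through both $P_i$ and $1_S - P_i$, the rest is bookkeeping with commuting limits.
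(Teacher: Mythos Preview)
Your proof is correct and follows essentially the same approach as the paper: your preservation lemma is precisely the paper's ``observation regarding the Riesz decomposition'' (with $P_i v$ and $v-P_iv$ playing the roles of $\psi_1$ and $\psi_2$), and your iterated application of $P_i$ and $1_S-P_i$ is the unrolled version of the paper's induction on $k$ with strengthened hypothesis $(I)$. The only cosmetic differences are that you invoke commutativity of the $P_i$ explicitly rather than hiding it in an inductive step, and you package uniqueness via M\"obius inversion where the paper computes the same iterated limits $\prod_{j\in J}P_j\psi=\sum_{I\supseteq J}\tilde h^I$ and then peels off the $\tilde h^I$ by downward induction on $|J|$.
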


\begin{proof}
Before proving the Proposition we will make an observation regarding the Riesz-decomposition of vectors, see e.g Theorem 5.6 in \cite{S}. Assume $\psi \in [0, \infty[^{S}$ satisfies $B_{1} \psi \leq \psi$ for some $B_{1} \in M_{S}(\mathbb{R}_{+})$. The Riesz decomposition then says that we can write $\psi=\psi_{1}+\psi_{2}$ with:
\begin{equation} \label{eriesz}
\psi_{1}:=\lim_{n \to \infty} B_{1}^{n}\psi \quad , \quad \psi_{2} :=\sum_{n=0}^{\infty} B_{1}^{n}(\psi -B_{1}\psi)
\end{equation}
If there exists a (possibly empty) family $B_{2}, \dots , B_{k} \in M_{S}(\mathbb{R}_{+})$ such that $\{B_{i} \}_{i=1}^{k}$ is a family of pairwise commuting matrices and $\psi$ is sub-invariant for $\{B_{i}\}_{i=1}^{k}$, then we claim that $\psi_{1}$ and $\psi_{2}$ are also sub-invariant for $\{B_{i}\}_{i=1}^{k}$. To prove this let $J \subseteq \{1,2, \dots , k\}$ be arbitrary. Since:
\begin{equation*}
\prod_{j\in J}(1_{S}-B_{j})  B_{1}^{n}\psi  =  B_{1}^{n} \prod_{j\in J}(1_{S}-B_{j}) \psi \geq 0
\end{equation*}
for each $n\in \mathbb{N}$ it follows that $\psi_{1}$ is sub-invariant. If $1\notin J$, then:
\begin{equation*}
\prod_{j\in J}(1_{S}-B_{j}) \sum_{n=0}^{\infty} B_{1}^{n}(\psi - B_{1}\psi)
 =  \sum_{n=0}^{\infty} B_{1}^{n}\prod_{j\in J\cup\{1\}}(1_{S}-B_{j})\psi .
\end{equation*}
Every vector in the sum is non-negative by assumption, so this is a non-negative vector. If $1 \in J$ then:
\begin{align*}
\prod_{j\in J}(1_{S}-B_{j}) \sum_{n=0}^{\infty} B_{1}^{n}(\psi - B_{1}\psi)
&= (1_{S}-B_{1}) \sum_{n=0}^{\infty} B_{1}^{n}\prod_{j \in J}(1_{S}- B_{j}) \psi \\
&= \prod_{j \in J}(1_{S}- B_{j}) \psi \geq 0 .
\end{align*}
Hence $\psi_{1}$ and $\psi_{2}$ are sub-invariant, finishing our observation regarding the Riesz-decomposition.

To prove that the family of vectors in the Proposition exists we will prove the stronger statement that when $B_{1}, \dots , B_{k} \in M_{S}(\mathbb{R}_{+})$ are pairwise commuting and $\psi$ is a sub-invariant vector for the family $\{B_{i}\}_{i=1}^{k}$, there exists a family of vectors $h^{I}$, $I\subseteq \{1,2, \dots , k\}$, that satisfy \ref{inv1}-\ref{inv3} and $(I)$:

\begin{itemize}
\item[(I)] \label{itemI} If $A_{1}, \dots , A_{p}\in M_{S}(\mathbb{R}_{+})$ is a family of pairwise commuting matrices with $p\geq k$ and $A_{i}=B_{i}$ for $i\leq k$ satisfying that $\psi$ is sub-invariant for $\{A_{i}\}_{i=1}^{p}$, then the vectors $h^{I}$,$I\subseteq \{1,2, \dots , k\}$, will also be sub-invariant for $\{A_{i}\}_{i=1}^{p}$.
\end{itemize}
We will prove this by induction over $k$.

If $k=1$ then $\psi$ is sub-invariant under $B_{1}$, and hence $B_{1} \psi \leq \psi$. Use the Riesz-decomposition to write $\psi=\psi_{1}+\psi_{2}$ with $\psi_{1}$ and $\psi_{2}$ as in \eqref{eriesz}. Clearly $B_{1} \psi_{1}=\psi_{1}$ and $\lim_{n \to \infty} B_{1}^{n} \psi_{2}=0$, so setting $h^{\{1\}}:=\psi_{1}$ and $h^{\emptyset}:=\psi_{2}$ we have constructed a family satisfying \ref{inv1}-\ref{inv3}. Our observation on the Riesz-decomposition implies that the family also satisfies $(I)$.

Assume now that the statement is true for a $k\in \mathbb{N}$. Let $B_{1}, \dots , B_{k}, B_{k+1} \in M_{S}(\mathbb{R}_{+})$ be pairwise commuting and let $\psi\in [0, \infty[^{S}$ be sub-invariant for the family $\{B_{i} \}_{i=1}^{k+1}$. In particular $\psi$ is sub-invariant for the family $\{B_{i} \}_{i=1}^{k}$, so our induction hypothesis implies there there exists a family $\tilde{h}^{I}$, $I\subseteq \{1, \dots , k\}$ satisfying \ref{inv1}- \ref{inv3} and condition $(I)$ for the family $\{B_{i} \}_{i=1}^{k}$. Condition $(I)$ implies that each $\tilde{h}^{I}$ is sub-invariant for $\{B_{i} \}_{i=1}^{k+1}$, so in particular $B_{k+1} \tilde{h}^{I} \leq \tilde{h}^{I}$, and we can use the Riesz-decomposition to write $\tilde{h}^{I}=h^{I\cup\{k+1\}}+h^{I}$ with:
\begin{equation*}
h^{I\cup\{k+1\}}:=\lim_{n \to \infty} B_{k+1}^{n}\tilde{h}^{I} \quad , \quad h^{I}:=\sum_{n=0}^{\infty} B_{k+1}^{n}(\tilde{h}^{I} -B_{k+1}\tilde{h}^{I})
\end{equation*}
where $B_{k+1}h^{I\cup\{k+1\}}=h^{I\cup\{k+1\}}$ and $\lim_{n\to \infty} B_{k+1}^{n}h^{I}=0$. For $i\in I$ we have:
\begin{equation*}
B_{i}h^{I\cup\{k+1\}} = \lim_{n \to \infty} B_{k+1}^{n}B_{i}\tilde{h}^{I}= h^{I\cup\{k+1\}}
\end{equation*}
and
\begin{equation*}
B_{i}h^{I}
 =\sum_{n=0}^{\infty} B_{k+1}^{n}(B_{i}\tilde{h}^{I} -B_{k+1} B_{i}\tilde{h}^{I})
= h^{I}.
\end{equation*}
For $j\in \{1, \dots , k+1\} \setminus (I\cup\{k+1\})$ the inequalities $0\leq B_{j}^{n} h^{I} \leq B_{j}^{n} \tilde{h}^{I}$ and $0\leq B_{j}^{n} h^{I\cup\{k+1\}} \leq B_{j}^{n} \tilde{h}^{I}$ imply that 
\begin{equation*}
\lim_{n\to \infty} B_{j}^{n} h^{I}=\lim_{n\to \infty} B_{j}^{n} h^{I\cup\{k+1\}}=0 
\end{equation*}
so our new family $h^{I}$, $I \subseteq \{1, \dots , k+1\}$, satisfies \ref{inv1}-\ref{inv3} for $\{B_{i}\}_{i=1}^{k+1}$. Assume $A_{1}, \dots , A_{p}\in M_{S}(\mathbb{R}_{+})$ is a family of pairwise commuting matrices with $p\geq k+1$ and $A_{i}=B_{i}$ for $i\leq k+1$, and assume that $\psi$ is sub-invariant for $\{A_{i}\}_{i=1}^{p}$. By our induction hypothesis each $\tilde{h}^{I}$ is sub-invariant for $\{A_{i}\}_{i=1}^{p}$, and using our observation on the Riesz-decomposition it follows that each $h^{I}$, $I\subseteq \{1, \dots ,k+1 \}$, is sub-invariant for $\{A_{i}\}_{i=1}^{p}$ as well. The existence statement in the Proposition now follows by induction.

To prove that the decomposition is unique, assume that there exists two families $h^{I}$ and $\tilde{h}^{I}$, $I \subseteq \{1, \dots , k\}$, that are sub-invariant and satisfies \ref{inv1}-\ref{inv3} for the family $\{B_{i}\}_{i=1}^{k}$. It then follows that the expression:
\begin{equation*}
\lim_{n_{1}\to \infty} B_{1}^{n_{1}} \lim_{n_{2}\to \infty} B_{2}^{n_{2}} \cdots \lim_{n_{k}\to \infty} B_{k}^{n_{k}} \psi 
\end{equation*}
is equal to both $h^{\{1,\dots , k\}}$ and $\tilde{h}^{\{1,\dots , k\}}$. Assume now that $h^{I}=\tilde{h}^{I}$ for all subsets $I \subseteq \{1, \dots , k\}$ with $\lvert I \rvert \geq n$ for some $1 \leq n \leq k$, and take a set $J\subseteq \{1,\dots , k\}$ with $\lvert J \rvert=n-1$. We write $J=\{j_{1}, \dots , j_{n-1} \}$. Taking the limits:
\begin{equation*}
\lim_{m_{1}\to \infty} B_{j_{1}}^{m_{1}} \lim_{m_{2}\to \infty} B_{j_{2}}^{m_{2}} \cdots \lim_{m_{n-1}\to \infty} B_{j_{n-1}}^{m_{n-1}} \psi 
\end{equation*}
we get:
\begin{equation*}
\sum_{I \supseteq J} \tilde{h}^{I} = \sum_{I \supseteq J} h^{I} .
\end{equation*}
By assumption $\tilde{h}^{I}=h^{I}$ for all $I \neq J$ in this sum, so we must have that $\tilde{h}^{J}=h^{J}$. It now follows from induction that the family $h^{I}$ is unique.
\end{proof}

\section{A description of the gauge-invariant KMS states} \label{gaugeinvariant}
The first step in our analysis of the KMS states on the Toeplitz algebra of a finite $k$-graph $\Lambda$ is to describe the ones that are gauge-invariant. In this section we will reduce the problem of finding gauge-invariant KMS states to the much simpler problem of finding certain invariant vectors over $\Lambda^{0}$. We remind the reader that for a finite set $S$ the $1$-norm for a vector $\psi\in \mathbb{R}^{S}$ is given by $\lVert\psi \rVert_{1}=\sum_{s \in S}\lvert \psi_{s} \rvert$.

\begin{lemma}\label{l41}
Let $\Lambda$ be a finite $k$-graph and let $r \in \mathbb{R}^{k}$ and $\beta \in \mathbb{R}$. Let $\omega$ be a $\beta$-KMS state for $\alpha^{r}$ and set $\psi_{v}:=\omega(p_{v})$ for each $v \in \Lambda^{0}$. Then $\psi\in [0, \infty[^{\Lambda^{0}}$ is a sub-invariant vector for the family $\{e^{-\beta r_{i}}A_{i} \}_{i=1}^{k}$ of unit $1$-norm. 
\end{lemma}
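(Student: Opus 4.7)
The plan has two easy pieces and one slightly calculational piece. Non-negativity of $\psi$ is immediate from $\psi_{v}=\omega(p_{v})\ge 0$. The $1$-norm is unit because $\Lambda^{0}$ is finite, so $\sum_{v\in \Lambda^{0}} p_{v}$ is the unit of $\mathcal{T}C^{*}(\Lambda)$, giving $\lVert \psi \rVert_{1}=\sum_{v}\omega(p_{v})=\omega(1)=1$.

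The heart of the proof is the sub-invariance inequality. For each $v\in \Lambda^{0}$ and $I\subseteq \{1,\dots,k\}$ I introduce
\begin{equation*}
P^{I}_{v}:=\prod_{i\in I}\Bigl(p_{v}-\sum_{\lambda\in v\Lambda^{e_{i}}} S_{\lambda}S_{\lambda}^{*}\Bigr).
\end{equation*}
I first argue that each factor is a projection: for distinct $\lambda,\mu\in v\Lambda^{e_{i}}$ relation (5) gives $S_{\mu}^{*}S_{\lambda}=0$ (the set $\Lambda^{\min}(\mu,\lambda)$ is empty because $d(\mu)\vee d(\lambda)=e_{i}$ forces $\mu=\lambda$), so the $S_{\lambda}S_{\lambda}^{*}$ are pairwise orthogonal subprojections of $p_{v}$, and relation (4) makes the factor positive, hence a projection. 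All of these factors live in the commutative algebra $\overline{\text{span}}\{S_{\lambda}S_{\lambda}^{*} : \lambda\in \Lambda\}\simeq C(\Lambda^{*})$, so they commute and $P^{I}_{v}$ is itself a projection. In particular $\omega(P^{I}_{v})\ge 0$.

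Next I expand the product. Writing $n_{J}:=\sum_{j\in J}e_{j}$ for $J\subseteq I$, the factorisation property together with passage to $C(\Lambda^{*})$ (both sides are the indicator of $\{x\in \Lambda^{*}:r(x)=v,\ d(x)\ge n_{J}\}$) gives
\begin{equation*}
\prod_{j\in J}\sum_{\lambda\in v\Lambda^{e_{j}}} S_{\lambda}S_{\lambda}^{*}=\sum_{\mu\in v\Lambda^{n_{J}}} S_{\mu}S_{\mu}^{*},
\end{equation*}
so that $P^{I}_{v}=\sum_{J\subseteq I}(-1)^{|J|}\sum_{\mu\in v\Lambda^{n_{J}}} S_{\mu}S_{\mu}^{*}$.

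Finally I use the KMS condition on each $S_{\mu}S_{\mu}^{*}$: since $S_{\mu}$ is analytic with $\alpha^{r}_{i\beta}(S_{\mu})=e^{-\beta r\cdot d(\mu)}S_{\mu}$ and $S_{\mu}^{*}S_{\mu}=p_{s(\mu)}$, one gets $\omega(S_{\mu}S_{\mu}^{*})=e^{-\beta r\cdot d(\mu)}\psi_{s(\mu)}$. Summing, $\sum_{\mu\in v\Lambda^{n_{J}}}\omega(S_{\mu}S_{\mu}^{*})=e^{-\beta r\cdot n_{J}}(A^{n_{J}}\psi)_{v}$, and therefore
\begin{equation*}
0\le \omega(P^{I}_{v})=\sum_{J\subseteq I}(-1)^{|J|}e^{-\beta r\cdot n_{J}}(A^{n_{J}}\psi)_{v}=\Bigl(\prod_{i\in I}(1_{\Lambda^{0}}-e^{-\beta r_{i}}A_{i})\psi\Bigr)_{v},
\end{equation*}
which is exactly the sub-invariance inequality \eqref{eaiv}. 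The only genuine obstacle is verifying that the candidate element $P^{I}_{v}$ really is a product of commuting projections; once that is in hand, the expansion via the factorisation property and the standard KMS computation on $\omega(S_{\mu}S_{\mu}^{*})$ finish the argument.
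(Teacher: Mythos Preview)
Your proof is correct. The paper itself does not write out an argument---it simply observes that the proof of Proposition~4.1(a) in \cite{aHLRS1} carries over verbatim once empty sums are read as zero---so you have in effect supplied the details that the paper omits. Your approach (showing that $P^{I}_{v}$ is a projection in the commutative diagonal $C(\Lambda^{*})$, expanding it via inclusion--exclusion and the factorisation property, and then applying the KMS relation to each $S_{\mu}S_{\mu}^{*}$) is precisely the standard argument from \cite{aHLRS1}, so there is no substantive difference in method.
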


\begin{proof}
When $\beta\geq 0$, $r\in ]0, \infty[^{k}$ and $\Lambda$ has no sources this statement is part $(a)$ of Proposition 4.1 in \cite{aHLRS1}. When interpreting empty sums as $0$ the proof given there works for general $\beta \in \mathbb{R}$, $r\in \mathbb{R}^{k}$ and finite $k$-graphs, so we will not give it here.
\end{proof}

Lemma \ref{l41} gives us an affine map from the set of gauge-invariant $\beta$-KMS states for $\alpha^{r}$ on $\mathcal{T}C^{*}(\Lambda)$ to the set of non-negative sub-invariant vectors for the family $\{e^{-\beta r_{i}}A_{i}\}_{i=1}^{k}$ of unit $1$-norm. Proposition \ref{p43} below implies that it is a bijection. To prove this we need the following description of $\Lambda^{\infty_{I}, m}$ when we have a partition $I\sqcup J = \{1, \dots , k\}$ with $I\neq \emptyset$. Set:
\begin{equation*}
\Lambda^{0}(I):=\{v\in \Lambda^{0} \ : \ v\Lambda^{(n,0)}\neq \emptyset \text{ for all } n\in \mathbb{N}^{I}\},
\end{equation*}
i.e. $\Lambda^{0}(I)$ are the vertexes that are not sources in $\Lambda_{I}$. For $(n,m) \in \mathbb{N}^{I}\oplus \mathbb{N}^{J}$ we set:
\begin{equation*}
\mathcal{U}_{I}^{(n,m)} =\{ \lambda\in \Lambda^{(n,m)} \ : \ s(\lambda(0,l))\in \Lambda^{0}(I) \text{ for each } 0\leq l \leq (n,m)    \} 
\end{equation*}
Giving $\mathcal{U}_{I}^{(n,m)}$ the discrete topology we can for each $n, l \in \mathbb{N}^{I}$ with $n \leq l$ define a continuous map $\pi_{l, n}: \mathcal{U}_{I}^{(l,m)} \to \mathcal{U}_{I}^{(n,m)}$ by $\pi_{l, n}(\lambda)=\lambda(0, (n,m))$.

\begin{lemma} \label{l1}
Assume $I \neq \emptyset$. Then $\Lambda^{0}(I)$ is closed in $\Lambda$, $\pi_{l,n}$ is surjective and $\lim_{\leftarrow n\in \mathbb{N}^{I}} \mathcal{U}_{I}^{(n,m)}$ is homeomorphic to 
$\Lambda^{\infty_{I}, m}$.
\end{lemma}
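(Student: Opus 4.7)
The plan is to establish the three claims in sequence. For the closedness of $\Lambda^{0}(I)$ (interpreted as $\overline{\Lambda^{0}(I)}=\Lambda^{0}(I)$ in the paper's closure terminology), I would assume $w\leq v$ with $v\in \Lambda^{0}(I)$ and pick $\mu\in w\Lambda v$ of degree $(p,q)$. Given any $n\in \mathbb{N}^{I}$, choose $\nu\in v\Lambda^{(n,0)}$; applying the factorization property to $\mu\nu\in w\Lambda^{(p+n,q)}$ yields a unique decomposition $\mu\nu=\alpha\beta$ with $d(\alpha)=(n,0)$, so $\alpha\in w\Lambda^{(n,0)}$. Since $n$ was arbitrary, $w\in \Lambda^{0}(I)$.

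The next two parts both hinge on the auxiliary fact: \emph{if $v\in \Lambda^{0}(I)$ then $v\Lambda^{\infty_{I},0}\neq \emptyset$.} Because $\Lambda$ is finite, $\Lambda^{*}$ is compact, and a sequence $\lambda_{k}\in v\Lambda^{(n^{(k)},0)}$ with $n^{(k)}\in \mathbb{N}^{I}$ tending coordinatewise to infinity admits a cluster point in $v\Lambda^{*}$, which must have degree $(\infty_{I},0)$. For the surjectivity of $\pi_{l,n}$, take $\lambda\in \mathcal{U}_{I}^{(n,m)}$; since $s(\lambda)\in \Lambda^{0}(I)$ the auxiliary fact supplies $y\in s(\lambda)\Lambda^{\infty_{I},0}$, and I set $\tilde{\lambda}:=\lambda\cdot y(0,l-n)\in\Lambda^{(l,m)}$. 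Any intermediate vertex $u=\tilde{\lambda}(l',l')$ satisfies $u\leq s(\tilde{\lambda})=s(y(0,l-n))$, and the shift $\sigma^{l-n}(y)$ witnesses $s(y(0,l-n))\in \Lambda^{0}(I)$; closedness then forces $u\in \Lambda^{0}(I)$, so $\tilde{\lambda}\in\mathcal{U}_{I}^{(l,m)}$ with $\pi_{l,n}(\tilde{\lambda})=\lambda$.

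For the homeomorphism, I define $\Phi:\Lambda^{\infty_{I},m}\to \lim_{\leftarrow n}\mathcal{U}_{I}^{(n,m)}$ by $\Phi(x)_{n}:=x(0,(n,m))$. That $\Phi(x)_{n}\in\mathcal{U}_{I}^{(n,m)}$ follows from the same shift argument: for $l'\leq (n,m)$ the shift $\sigma^{l'}(x)$ lies in $s(x(0,l'))\Lambda^{\infty_{I},m-l'_{J}}$, forcing $s(x(0,l'))\in \Lambda^{0}(I)$. Compatibility under the $\pi_{l,n}$ is immediate from the factorization property; injectivity holds because $x\in \Lambda^{\infty_{I},m}$ is determined by its finite prefixes; and surjectivity comes from gluing any compatible family $(\lambda_{n})_{n}$ into a functor $x:\Omega_{k,(\infty_{I},m)}\to \Lambda$ via $x(p,q):=\lambda_{n}(p,q)$ for any $n$ with $q\leq (n,m)$. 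For the topology, the preimages $\Phi^{-1}(\{\nu\})=Z(\nu)\cap\Lambda^{\infty_{I},m}$ for $\nu\in \mathcal{U}_{I}^{(n,m)}$ simultaneously form a basis for the inverse limit topology and a generating family of clopen sets for the subspace topology; since both spaces are compact Hausdorff, $\Phi$ is a homeomorphism.

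The main obstacle is the K\"{o}nig-type auxiliary fact at the start of the second paragraph: a direct greedy extension can fail because the next vertex along an $I$-path may be a source in some of the remaining directions, so the argument must genuinely exploit the compactness of $\Lambda^{*}$. A secondary technical point is to verify that the subspace topology on $\Lambda^{\infty_{I},m}$ inherited from the $Z(\lambda\setminus F)$ basis of $\Lambda^{*}$ matches the inverse limit topology, which requires noting that each $x\in \Lambda^{\infty_{I},m}$ has a unique extension of every prescribed prefix degree, so the exotic basic sets $Z(\lambda\setminus F)\cap\Lambda^{\infty_{I},m}$ reduce to finite boolean combinations of ordinary cylinders $Z(\nu)\cap\Lambda^{\infty_{I},m}$ with $\nu\in \mathcal{U}_{I}^{(n,m)}$.
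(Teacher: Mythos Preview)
Your proof is correct and follows essentially the same circle of ideas as the paper. The closedness argument is identical. For surjectivity of $\pi_{l,n}$ both proofs use a K\"onig-type compactness argument, but you package it differently: you first produce an infinite path $y\in s(\lambda)\Lambda^{\infty_I,0}$ via compactness of $\Lambda^*$ and then truncate, whereas the paper runs the pigeonhole directly on the finite sets $s(\mu)\Lambda^{l-n+\sum_{i\in I}se_i}$ to find a prefix $\lambda\in s(\mu)\Lambda^{l-n}$ that extends arbitrarily far in the $I$-directions. For the homeomorphism, the paper takes a shortcut: it factors through the full inverse limit $\lim_{\leftarrow n}\Lambda^{(n,m)}$, observes that the identity map to $\lim_{\leftarrow n}\mathcal{U}_I^{(n,m)}$ is a continuous bijection from a compact space to a Hausdorff space, and then cites Proposition~3.2 of \cite{aHKR} for the identification $\lim_{\leftarrow n}\Lambda^{(n,m)}\simeq \Lambda^{\infty_I,m}$. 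Your argument is self-contained, building $\Phi$ directly and checking the topologies match; this is a bit more work but avoids the external reference. One small point you use without comment is that $\Lambda^{\infty_I,m}$ is compact (needed for the ``continuous bijection from compact to Hausdorff'' step); this is true because it is closed in the compact space $\Lambda^*$, the degree conditions $d(x)_j=m_j$ and $d(x)_i=\infty$ each cutting out closed sets.
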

 
\begin{proof}
To see that $\Lambda^{0}(I)$ is closed, let $\lambda \in v \Lambda w $ with $ w\in \Lambda^{0}(I)$ and let $n\in \mathbb{N}^{I}$. For $\mu\in w\Lambda^{(n,0)}$ then $\lambda\mu \in v \Lambda^{(n,0)+d(\lambda)}$, and hence by the unique factorisation property there are paths $\lambda'\in v\Lambda^{(n,0)}$ and $\mu '\in \Lambda^{d(\lambda)}$ such that $\lambda \mu = \lambda '  \mu '$, so $v\Lambda^{(n,0)} \neq \emptyset$. It follows that $ \Lambda^{0}(I)$ is closed. When $\mu \in \mathcal{U}_{I}^{(n,m)}$ then $s(\mu) \in \Lambda^{0}(I)$, so for each $s\in \mathbb{N}$ we can choose $\lambda_{s} \in s(\mu)\Lambda$ with $d(\lambda_{s})=l-n+\sum_{i\in I} se_{i}$. Since $\Lambda^{l-n}$ is finite, there is a $\lambda \in s(\mu)\Lambda^{l-n}$ with $\lambda_{s}(0, l-n)=\lambda$ for infinitely many $s$, and it follows that $\mu\lambda\in \mathcal{U}_{I}^{(l,m)}$, proving that $\pi_{l,n}$ is surjective.

Denote by $\tilde{\pi}_{l,n}$ the map $\Lambda^{(l,m)}\to \Lambda^{(n,m)}$ given by $\tilde{\pi}_{l,n}(\lambda)=\lambda(0, (n,m))$, so $\pi$ is a restriction of $\tilde{\pi}$. The map:
\begin{equation} \label{ehom}
\lim_{\leftarrow n\in \mathbb{N}^{I}} \Lambda^{(n,m)}  \ni \{\lambda_{n}\}_{n\in \mathbb{N}^{I}} \to \{\lambda_{n}\}_{n\in \mathbb{N}^{I}} \in \lim_{\leftarrow n\in \mathbb{N}^{I}} \mathcal{U}_{I}^{(n,m)} 
\end{equation}
is well defined, because for each $n, n' \in \mathbb{N}^{I}$ the element $\lambda_{n} \in \Lambda^{(n,m)}$ satisfies that $\lambda_{n+n'}\in \Lambda^{(n+n',m)}$ can be decomposed $\lambda_{n+n'}=\lambda_{n} \mu$ with $\mu \in s(\lambda_{n})\Lambda^{(n',0)}$, so since $n'$ was arbitrary $s(\lambda_{n})\in \Lambda^{0}(I)$. Standard arguments imply that \eqref{ehom} is a continuous bijection, and so since $\lim_{\leftarrow n\in \mathbb{N}^{I}} \Lambda^{(n,m)}$ is compact it is also a homeomorphism. Since Proposition 3.2 in \cite{aHKR} implies that $\lim_{\leftarrow n\in \mathbb{N}^{I}} \Lambda^{(n,m)}$ is homeomorphic to $\Lambda^{\infty_{I}, m}$ this proves the Lemma.
\end{proof}

The construction of the KMS state in the proof of Proposition \ref{p43} has a predecessor in Theorem 5.1 in \cite{aHKR}. 

\begin{prop} \label{p43}
Let $\Lambda$ be a finite $k$-graph, $r\in \mathbb{R}^{k}$, $\beta \in \mathbb{R}$ and let $\psi \in [0, \infty[^{\Lambda^{0}}$ be a sub-invariant vector for the family $\{e^{-\beta r_{i}}A_{i}\}_{i=1}^{k}$ of unit $1$-norm. Then there exists a unique gauge-invariant $\beta$-KMS state $\omega_{\psi}$ for $\alpha^{r}$ on $\mathcal{T}C^{*}(\Lambda)$ such that $\omega_{\psi}(p_{v})=\psi_{v}$ for each $v\in \Lambda^{0}$.
\end{prop}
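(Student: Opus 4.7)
The plan is to construct a Borel probability measure $m$ on $\Lambda^*$ that is quasi-invariant with Radon-Nikodym cocycle $e^{-\beta c_r}$ and satisfies $m(Z(v)) = \psi_v$ for every $v \in \Lambda^0$; combined with Theorem 1.3 in \cite{N} this will furnish the desired gauge-invariant state $\omega_\psi$. Throughout set $B_i := e^{-\beta r_i} A_i$.

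The construction proceeds as follows. Apply Proposition 3.2 to write $\psi = \sum_{I \subseteq \{1,\dots,k\}} h^I$. Each $h^I$ is supported on $\Lambda^0(I)$: the identity $B^{(n,0)} h^I = h^I$ for $n \in \mathbb{N}^I$ evaluated at a vertex $v$ forces $h^I_v = 0$ whenever $v\Lambda^{(n,0)} = \emptyset$ for some $n \in \mathbb{N}^I$. Fix a partition $I \sqcup J = \{1,\dots,k\}$ and set $v^I := \prod_{j \in J}(1_{\Lambda^0} - B_j) h^I$, which is non-negative by sub-invariance of $h^I$ and, because $\Lambda^0(I)$ is closed by Lemma 4.2, still supported on $\Lambda^0(I)$. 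On each finite set $\mathcal{U}_I^{(n,m)}$ declare
\[
\mu^I_{(n,m)}(\{\lambda\}) := e^{-\beta r \cdot (n,m)} v^I_{s(\lambda)}.
\]
Using commutativity of the $B_i$'s together with $A_i h^I = e^{\beta r_i} h^I$ for $i \in I$, a short computation checks the consistency
\[
\sum_{\mu \in \mathcal{U}_I^{(l,m)}:\, \pi_{l,n}(\mu) = \lambda} \mu^I_{(l,m)}(\{\mu\}) = \mu^I_{(n,m)}(\{\lambda\}) \quad \text{for all } n \leq l \in \mathbb{N}^I,
\]
so by the Kolmogorov extension theorem and the homeomorphism in Lemma 4.2 these weights assemble into a Borel measure on each $\Lambda^{\infty_I, m}$; summing over $m \in \mathbb{N}^J$ gives $m^I$ on $\partial^I\Lambda$, and I set $m := \sum_I m^I$.

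The core of the verification is the formula $m^I(Z(\lambda)) = e^{-\beta r \cdot d(\lambda)} h^I_{s(\lambda)}$. Enumerating the extensions of $\lambda$ inside $\Lambda^{\infty_I, m'}$ for $m' \geq d(\lambda)_J$ and summing yields
\[
m^I(Z(\lambda)) = e^{-\beta r \cdot d(\lambda)} \sum_{m'' \in \mathbb{N}^J} \left( B^{(0,m'')} \prod_{j \in J}(1 - B_j) h^I \right)_{s(\lambda)}.
\]
Iterating the telescoping identity $\sum_{n \geq 0} B_j^n (1 - B_j) h^I = h^I - \lim_n B_j^n h^I = h^I$ for $j \in J$ in the commuting variables (monotone convergence is fine since every partial sum is a non-negative vector) collapses the inner sum to $h^I_{s(\lambda)}$. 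Summing over $I$ gives $m(Z(\lambda)) = e^{-\beta r \cdot d(\lambda)} \psi_{s(\lambda)}$, hence $m$ is a probability measure as $\|\psi\|_1 = 1$. Quasi-invariance with cocycle $e^{-\beta c_r}$ reduces to the identity $m(Z(\lambda)) = e^{-\beta r \cdot d(\lambda)} m(Z(s(\lambda)))$ on the bisections $U_\lambda = \{(\lambda x, d(\lambda), x) : x \in Z(s(\lambda))\}$ which generate the topology on $\mathcal{G}_\Lambda$, and this is immediate from the formula.

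For uniqueness, any gauge-invariant $\beta$-KMS state $\omega$ satisfies $\omega(S_\lambda S_\mu^*) = 0$ when $d(\lambda) \neq d(\mu)$; when $d(\lambda) = d(\mu)$ the KMS condition combined with the Toeplitz-Cuntz-Krieger relations and the observation that $\Lambda^{\min}(\mu, \lambda)$ is empty unless $\lambda = \mu$ forces $\omega(S_\lambda S_\mu^*) = \delta_{\lambda,\mu} e^{-\beta r \cdot d(\lambda)} \psi_{s(\lambda)}$; density of $\text{span}\{S_\lambda S_\mu^*\}$ in $\mathcal{T}C^*(\Lambda)$ then determines $\omega$. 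The main obstacle is arranging the Kolmogorov data so that the weights are simultaneously non-negative, consistent, and telescope to the correct cylinder values; once the sub-invariance from Proposition 3.2 and the closedness of $\Lambda^0(I)$ are in hand, the remaining verifications are routine.
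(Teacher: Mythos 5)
Your proposal is correct and follows essentially the same route as the paper: decompose $\psi$ via Proposition \ref{p32}, build inverse-limit measures on the sets $\mathcal{U}_{I}^{(n,m)}$ from the vector $\prod_{j\in J}(1_{\Lambda^{0}}-e^{-\beta r_{j}}A_{j})h^{I}$ using the eigenvector property on $I$ and the closedness of $\Lambda^{0}(I)$, recover $\psi_{v}=m(Z(v))$ by the telescoping sum over $\mathbb{N}^{J}$, and pin down uniqueness from $\omega(S_{\lambda}S_{\mu}^{*})=\delta_{\lambda,\mu}e^{-\beta r\cdot d(\lambda)}\psi_{s(\lambda)}$. The only (immaterial) differences are that you treat all $I$ simultaneously and sum the measures where the paper reduces to a single partition by convexity, and that you conclude the KMS property via quasi-invariance and \cite{N} where the paper composes with the conditional expectation and cites \cite{aHLRS1}.
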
 

\begin{proof}
Assume $\omega$ and $\omega'$ are gauge-invariant $\beta$-KMS states for $\alpha^{r}$ with $\omega(p_{v})=\omega'(p_{v})$ for all $v \in \Lambda^{0}$. Lemma 3.1 in \cite{C} implies that both $\omega$ and $\omega'$ are determined by their values on the elements $S_{\lambda} S_{\lambda}^{*}$, $\lambda \in \Lambda$. Since:
\begin{equation*}
\omega(S_{\lambda} S_{\lambda}^{*})
= e^{- \beta r \cdot d(\lambda)} \omega(p_{s(\lambda)} )
= e^{- \beta r \cdot d(\lambda)} \omega'(p_{s(\lambda)} )
=\omega'(S_{\lambda} S_{\lambda}^{*})
\end{equation*}
we must have $\omega = \omega'$, which proves that if the state $\omega_{\psi}$ exists it is unique. Proposition \ref{p32} implies that it is enough to prove that $\omega_{\psi}$ exists when there is a partition $I \sqcup J=\{1, 2, \dots , k\}$ with $e^{-\beta r_{i}}A_{i}\psi =  \psi$ for $i \in I$ and $\lim_{n \to \infty}(e^{-\beta r_{j}}A_{j})^{n} \psi=0$ for $j \in J$, so we assume this is the case. We now define a vector $\phi$ by:
\begin{equation*}
\phi:=\prod_{j\in J} (1_{\Lambda^{0}}-e^{-\beta r_{j}}A_{j})\psi.
\end{equation*}
When $J= \emptyset$ we interpret this as $\phi:=\psi$. Notice $\phi\in [0, \infty[^{\Lambda^{0}}$ since $\psi$ is sub-invariant, it is however not clear yet that $\phi \neq 0$. We will now define measures $\nu^{m}$ on $\Lambda^{\infty_{I}, m}$ for each $m\in\mathbb{N}^{J}$ using $\phi$. When $I=\emptyset$, we define $\nu^{m}$ on $\Lambda^{\infty_{I}, m}=\Lambda^{m}$ by $\nu^{m}(\{\lambda\})=e^{-\beta r \cdot m}\phi_{s(\lambda)}$. When $I\neq \emptyset$ give the finite set $\mathcal{U}_{I}^{(n,m)}$ the discrete topology for each $(n,m) \in \mathbb{N}^{I}\oplus \mathbb{N}^{J}$, and define a measure $\nu^{n,m}$ on $\mathcal{U}_{I}^{(n,m)}$ by:
\begin{equation}
\nu^{n,m}(\{ \lambda \})=e^{-\beta r\cdot (n,m)} \phi_{s(\lambda)} \qquad \text{ for } \lambda \in \mathcal{U}_{I}^{(n,m)}.
\end{equation}
Since the vertex matrices commute it follows from the definition of $\phi$ that $e^{-\beta r_{i}}A_{i} \phi=\phi$ for $i \in I$. For $v\in \Lambda^{0} \setminus \Lambda^{0}(I)$ there is a $n \in \mathbb{N}^{I}$ with $A^{(n,0)}(v,u)=0$ for all $u$, and hence $\phi_{v} = e^{-\beta r \cdot(n,0)}(A^{(n,0)} \phi)_{v} = 0$. Since $\Lambda^{0}(I)$ is closed by Lemma \ref{l1} we get for any $\lambda \in \mathcal{U}_{I}^{(n,m)}$:
\begin{align*}
&\nu^{l,m}(\pi_{l,n}^{-1}(\{\lambda \}))= \sum_{\mu \in \pi_{l,n}^{-1}(\{\lambda \})} e^{-\beta r\cdot (l,m)} \phi_{s(\mu)}
= \sum_{\eta \in s(\lambda)\mathcal{U}_{I}^{(l-n,0)}} e^{-\beta r\cdot (l,m)} \phi_{s(\eta)}\\
&= \sum_{w \in \Lambda^{0}}\sum_{\eta \in s(\lambda)\mathcal{U}_{I}^{(l-n,0)}w} e^{-\beta r\cdot (l,m)} \phi_{w}
=\sum_{w \in \Lambda^{0}}\sum_{\eta \in s(\lambda)\Lambda^{(l-n, 0)}w} e^{-\beta r\cdot (l,m)} \phi_{w} 
 \\
&= e^{-\beta r\cdot (l,m)} (A^{(l-n, 0)} \phi )_{s(\lambda)}
= e^{-\beta r\cdot (n,m)}  \phi_{s(\lambda)}
= \nu^{n,m}(\{\lambda\})
\end{align*}
Letting $\pi_{n} :\lim_{\leftarrow n\in \mathbb{N}^{I}}\mathcal{U}_{I}^{(n,m)} \to \mathcal{U}_{I}^{(n,m)}$ be the natural projection of the inverse limit for each $n\in \mathbb{N}^{I}$, a standard argument (using e.g. Lemma 5.2 in \cite{aHKR}) gives the existence of a Borel measure $\nu^{m}$ on $\lim_{\leftarrow n\in \mathbb{N}^{I}}\mathcal{U}_{I}^{(n,m)}$ satisfying $\nu^{m}(\pi_{n}^{-1}(\{\lambda\}))=\nu^{n,m}(\{\lambda\})$ for each $n\in \mathbb{N}^{I}$ and $\lambda \in \mathcal{U}_{I}^{(n,m)}$. We consider $\nu^{m}$ as a Borel measure on $\Lambda^{*}$ with $\nu^{m}(\Lambda^{\infty_{I}, m})=\nu^{m}(\Lambda^{*})$. By construction it satisfies:
\begin{equation} \label{eqnice}
\nu^{m}(Z(\lambda)\cap \Lambda^{\infty_{I}, m})=\nu^{n,m}(\{\lambda\}) = e^{- \beta r\cdot d(\lambda)} \phi_{s(\lambda)}
\end{equation}
for each $\lambda \in \mathcal{U}_{I}^{(n,m)}$. If $\lambda \in \Lambda^{(n,m)} \setminus \mathcal{U}_{I}^{(n,m)}$ then \eqref{eqnice} still holds true since both sides are $0$. The measure $\nu^{m}$ constructed when $I=\emptyset$ also satisfies \eqref{eqnice}. We will now construct a measure $\nu$ on $\partial^{I}\Lambda$ by summing all of the measures $\nu^{m}$, $m\in \mathbb{N}^{J}$. When $J=\emptyset$ we have only constructed a measure $\nu^{0}$ on $\Lambda^{\infty_{I}, 0}$, so we set $\nu=\nu^{0}$ and notice that by \eqref{eqnice} $\nu(Z(v))=\psi_{v}$ for each $v\in \Lambda^{0}$. When $J\neq \emptyset$ we can use \eqref{eqnice} for any $\mu \in \Lambda$ with $l:=d(\mu)_{J} \leq m$ to see that:
\begin{align} \label{ecirkel}
\nu^{m}(Z(\mu))&=\nu^{m}(Z(\mu)\cap \Lambda^{\infty_{I}, m})= \sum_{\lambda \in s(\mu)\Lambda^{(0,m-l)}} \nu^{m}(Z(\mu \lambda)\cap \Lambda^{\infty_{I}, m}) \\
\nonumber &= \sum_{\lambda \in s(\mu)\Lambda^{(0,m-l)}} e^{- \beta r \cdot (d(\mu)_{I},m)} \phi_{s(\lambda)}
= e^{- \beta r \cdot (d(\mu)_{I},m)} (A^{(0,m-l)}\phi)_{s(\mu)}
\end{align}
In particular, we have that $\nu^{m}(Z(v))=e^{- \beta r \cdot (0,m)} (A^{(0,m)}\phi)_{v}$ for all $v\in \Lambda^{0}$. Taking a $M \in \mathbb{N}^{J}$ we see that:
\begin{align*}
&\sum_{0\leq m \leq M}e^{-\beta r \cdot (0,m)} A^{(0,m)}\phi
= \sum_{0\leq m \leq M}\prod_{j \in J}\left( e^{-\beta r_{j}} A_{j} \right)^{m_{j}}\phi
=  \prod_{j \in J} \sum_{m_{j}=0}^{M_{j}}  \left(e^{-\beta r_{j}} A_{j} \right)^{m_{j}}\phi \\
&= \left[\prod_{j \in J} \sum_{m_{j}=0}^{M_{j}}  \left(e^{-\beta r_{j}} A_{j} \right)^{m_{j}}(1_{\Lambda^{0}}-e^{-\beta r_{j}}A_{j})\right]\psi
= \left[ \prod_{j \in J} (1_{\Lambda^{0}}-(e^{-\beta r_{j}} A_{j})^{M_{j}+1})\right]\psi \\
&=\left[ \sum_{L \subseteq J} (-1)^{\lvert L \rvert} \prod_{j \in L} (e^{-\beta r_{j}} A_{j})^{M_{j}+1}\right]\psi 
\end{align*}
By choice of $J$ we have that $\prod_{j \in L} (e^{-\beta r_{j}} A_{j})^{M_{j}+1}\psi \to 0$ for $M_{j}\to \infty$ for any $j \in L$, so when we consider the limit all terms in the sum except for the one where $L=\emptyset$ vanishes, so:
\begin{equation*}
\sum_{ m \in \mathbb{N}^{J}}e^{-\beta r \cdot (0,m)} A^{(0,m)}\phi = \psi .
\end{equation*}
This implies $\phi\neq 0$ and it implies that we can define a Borel probability measure $\nu$ on $\Lambda^{*}$ by $\nu=\sum_{m \in \mathbb{N}^{J}} \nu^{m}$ that as in the case where $J=\emptyset$ satisfies $\nu(Z(v))=\psi_{v}$ for each $v \in \Lambda^{0}$. Since $\nu$ is a Borel probability measure on the second countable locally compact Hausdorff space $\Lambda^{*}$ it is also a regular measure. We define a state $\omega_{\psi}$ by:
\begin{equation*}
\omega_{\psi}(a)=\int_{\Lambda^{*}} P(a) \ d\nu \qquad \forall a \in \mathcal{T}C^{*}(\Lambda)
\end{equation*}
where $P: \mathcal{T}C^{*}(\Lambda) \to C(\Lambda^{*})$ is the canonical conditional expectation. Since $P(S_{\lambda}S_{\mu}^{*})=0$ when $\mu \neq \lambda$ it follows that $\omega_{\psi}$ is gauge-invariant. For any path $\lambda \in \Lambda^{(n,l)}$ for some $n \in \mathbb{N}^{I}$ and $l \in \mathbb{N}^{J}$ we have by \eqref{ecirkel} when $J\neq \emptyset$:
\begin{align*}
&\nu(Z(\lambda))=\sum_{m \in \mathbb{N}^{J}} \nu^{m}(Z(\lambda))
=\sum_{m \geq l} \nu^{m}(Z(\lambda))
= \sum_{m \geq l} e^{- \beta r \cdot (n,m)} (A^{(0, m-l)} \phi)_{s(\lambda)} \\
&= \sum_{m \in \mathbb{N}^{J}} e^{- \beta r \cdot (n,m+l)} (A^{(0, m)} \phi)_{s(\lambda)}
= e^{- \beta r \cdot d(\lambda)}\sum_{m \in \mathbb{N}^{J}} e^{- \beta r \cdot (0,m)} (A^{(0, m)} \phi)_{s(\lambda)} \\
&= e^{- \beta r \cdot d(\lambda)} \nu(Z(s(\lambda)))
\end{align*}
When $J=\emptyset$ we also have $\nu(Z(\lambda))=e^{- \beta r \cdot d(\lambda)} \nu(Z(s(\lambda)))$, so in both cases this implies that:
\begin{equation*}
\omega_{\psi}(S_{\lambda}S_{\mu}^{*}) = \delta_{\lambda, \mu} \nu(Z(\lambda)) 
=\delta_{\lambda, \mu} e^{- \beta r \cdot d(\lambda)} \nu(Z(s(\lambda)))
=\delta_{\lambda, \mu} e^{- \beta r \cdot d(\lambda)} \psi_{s(\lambda)}  
\end{equation*}
It now follows, for example as in the proof of part $(b)$ of Proposition 3.1 in \cite{aHLRS1}, that $\omega_{\psi}$ is a $\beta$-KMS state for $\alpha^{r}$.
\end{proof}

The proof of Proposition \ref{p43} yields the following corollary. 

\begin{cor} \label{c43}
In the setting of Proposition \ref{p43} assume that there exist sets $I,J$ such that $I \sqcup J=\{1, 2, \dots , k\}$ and $e^{-\beta r_{i}}A_{i}\psi =  \psi$ for $i \in I$ and $\lim_{n \to \infty}(e^{-\beta r_{j}}A_{j})^{n} \psi=0$ for $j \in J$. Then the measure $m_{\psi}$ on $\Lambda^{*}$ associated to $\omega_{\psi}$ is concentrated on $\partial^{I}\Lambda$, i.e. $m_{\psi}(\partial^{I}\Lambda)=1$.
\end{cor}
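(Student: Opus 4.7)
The plan is to observe that the measure $m_\psi$ associated to $\omega_\psi$ coincides with the measure $\nu$ built in the proof of Proposition \ref{p43}, and that this $\nu$ was assembled from pieces each supported on $\partial^I\Lambda$ by construction.

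First I would reduce to the hypothesized decomposition: since both conditions on $\psi$ (the fixed-point condition on $I$ and the vanishing condition on $J$) are already imposed in the hypothesis of the corollary, this matches exactly the case to which the construction of $\omega_\psi$ in the proof of Proposition \ref{p43} applies. In that construction, for each $m \in \mathbb{N}^J$ one obtains a Borel measure $\nu^m$ on $\Lambda^*$ whose support lies in $\Lambda^{\infty_I, m}$, and $\nu$ is defined by $\nu = \sum_{m \in \mathbb{N}^J} \nu^m$ (with the convention $\nu = \nu^0$ when $J = \emptyset$). The state $\omega_\psi$ is then $\omega_\psi(a) = \int_{\Lambda^*} P(a)\, d\nu$.

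Next I would identify $m_\psi$ with $\nu$. Because $P$ restricts to the identity on $C(\Lambda^*)$, we have $\omega_\psi(f) = \int_{\Lambda^*} f\, d\nu$ for every $f \in C(\Lambda^*)$, so uniqueness in the Riesz Representation Theorem forces $m_\psi = \nu$.

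Finally I would estimate $m_\psi(\partial^I \Lambda)$. Since $\partial^I\Lambda = \bigsqcup_{m \in \mathbb{N}^J} \Lambda^{\infty_I, m}$ and each $\nu^m$ is concentrated on $\Lambda^{\infty_I, m}$, i.e.\ $\nu^m(\Lambda^{\infty_I, m}) = \nu^m(\Lambda^*)$, it follows that
\begin{equation*}
m_\psi(\partial^I\Lambda) = \nu(\partial^I\Lambda) = \sum_{m \in \mathbb{N}^J} \nu^m(\Lambda^{\infty_I, m}) = \sum_{m \in \mathbb{N}^J} \nu^m(\Lambda^*) = \nu(\Lambda^*) = 1,
\end{equation*}
where the last equality uses that $\omega_\psi$ is a state, so $m_\psi$ is a probability measure. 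There is no real obstacle here; the work was all done inside the proof of Proposition \ref{p43}, and the corollary merely reads off the support of the constructed measure. The only thing worth flagging explicitly is the identification $m_\psi = \nu$ via Riesz uniqueness, which justifies transferring the support property from $\nu$ to $m_\psi$.
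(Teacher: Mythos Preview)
Your proposal is correct and matches the paper's approach exactly: the paper states that the corollary follows directly from the proof of Proposition \ref{p43}, and what you have written is precisely the unpacking of that claim, including the identification $m_\psi=\nu$ via Riesz uniqueness and the observation that each $\nu^m$ is by construction concentrated on $\Lambda^{\infty_I,m}\subseteq\partial^I\Lambda$.
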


\section{Decomposition of gauge-invariant KMS states} \label{decompofKMS}
In this section we will investigate the gauge-invariant KMS states by analysing the sub-invariant vectors. The first step in this analysis is to construct sub-invariant vectors using components in different equivalences $\sim_{I}$ in the $k$-graph. The next step is to prove that all invariant vectors can be realised as convex combinations of the invariant vectors constructed.

First let us introduce some notation. For a set $S \subseteq \Lambda^{0}$ and $B\in M_{\Lambda^{0}}(\mathbb{R})$ we let $B^{S}\in M_{S}(\mathbb{R})$ denote the restriction of $B$ to $S \times S$ and for any matrix $B$ we write $\rho(B)$ for its spectral radius. Whenever we have a $k$-graph $\Lambda$ with vertex matrices $A_{1}, \dots , A_{k}$ and some $S\subseteq \Lambda^{0}$ we set:
\begin{equation*}
\rho(A^{S}):= (\rho(A_{1}^{S}), \rho(A_{2}^{S}), \dots , \rho(A_{k}^{S} )) \in \mathbb{R}^{k}
\end{equation*}

\begin{defn} \label{d51}
Let $\Lambda$ be a finite $k$-graph, $r\in \mathbb{R}^{k}$, $\beta \in \mathbb{R}$ and let $I \subseteq \{1, \dots , k\}$. A component $C$ in $\Lambda_{I}$ (i.e. an equivalence class for $\sim_{I}$) is called a $(I,\beta , r )$-subharmonic component, if it satisfies:
\begin{enumerate}
\item \label{subh1} All equivalence classes $D$ in $\sim_{I}$ with $D\neq C$ and $D\subseteq \overline{C}^{I}$ satisfies:
\begin{equation*}
\rho(A^{D})_{I} \lneq \rho(A^{C})_{I}
\end{equation*}
\item \label{subh2} $\rho(A_{i}^{C})=e^{\beta r_{i}}$ for $i\in I$.
\item \label{subh3}$\rho(A_{j}^{\overline{C}}) <e^{\beta r_{j}}$ for $j\in J:=\{1, \dots , k\}\setminus I$.
\end{enumerate}
\end{defn}
When $I= \emptyset$ then $\Lambda_{I} =\Lambda^{0}$ and the different equivalence classes are just the sets $\{v\}$, $v\in \Lambda^{0}$, so condition \ref{subh3} is the only one that is not trivially fulfilled. We will need some results from \cite{C} regarding the construction of vectors over $\Lambda^{0}$ which we will summarise in the following Lemma \ref{l502}.

\begin{lemma} \label{l502}
Let $\Lambda$ be a finite $k$-graph and let $r\in \mathbb{R}^{k}$ and $\beta \in \mathbb{R}$. For each $(\{1, \dots , k\}, \beta ,r)$-subharmonic component $C$ there exists a unique vector $z^{C}\in [0, \infty[^{\Lambda^{0}}$ of unit $1$-norm satisfying \ref{22222}. and \ref{22}.:
\begin{enumerate}
\item \label{22222}$z^{C}_{v} =0$ for $v \notin \overline{C}$.
\item \label{22}$A_{i} z^{C} = e^{\beta r_{i}} z^{C}$ for all $i\in \{1, \dots , k\}$.
\end{enumerate}
Furthermore $z^{C}_{v} >0$ for $v\in \overline{C}$. For any $x\in [0, \infty[ ^{\Lambda^{0}}$ of unit $1$-norm with $A_{i} x=e^{\beta r_{i}} x$ for all $i\in \{1, \dots , k\}$ there is a unique collection of $(\{1, \dots , k\}, \beta ,r)$-subharmonic components $\mathcal{C}$ in $\Lambda$ and numbers $t_{C}>0$, $C\in \mathcal{C}$, such that:
\begin{equation*}
x=\sum_{C\in \mathcal{C}} t_{C}z^{C} .
\end{equation*}
\end{lemma}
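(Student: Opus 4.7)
The plan is to invoke the joint Perron-Frobenius machinery for commuting non-negative matrices developed in \cite{C}. As the statement itself advertises, Lemma \ref{l502} is essentially a summary of those results adapted to our notation, so the work consists in setting up the correct restriction and in verifying the hypotheses of Definition \ref{d51} before quoting \cite{C}.

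For the existence, uniqueness, and positivity of $z^{C}$, I would first restrict the vertex matrices to $\overline{C}$. A preliminary observation is that the restrictions pairwise commute: if $v\notin\overline{C}$ and $w\in\overline{C}$ with $A_{i}(v,w)>0$, then $v\leq w\leq c$ for some $c\in C$ forces $v\in\overline{C}$, a contradiction; hence in $A_{i}A_{j}$ evaluated on $\overline{C}\times\overline{C}$ the intermediate vertices must themselves lie in $\overline{C}$, so the restrictions commute. Conditions \ref{subh1} and \ref{subh2} in Definition \ref{d51} then place us in the hypothesis of the joint Perron-Frobenius theorem in \cite{C}: the component $C$ realises the spectral radii $e^{\beta r_{i}}$ of each restricted matrix, while every other $\sim$-component inside $\overline{C}$ has strictly smaller spectral radius in at least one coordinate. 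That result supplies a unique unit $1$-norm common eigenvector on $\overline{C}$, strictly positive throughout $\overline{C}$, which extended by zero outside $\overline{C}$ yields $z^{C}$ and delivers uniqueness.

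For the decomposition statement, I would follow the analysis in \cite{C} where the analogous decomposition was carried out in the Cuntz-Krieger setting. First one checks that $\mathrm{supp}(x)$ is a union of $\sim$-components: if $v\sim w$ with $x_{v}>0$ and $x_{w}=0$, then picking $m\in\mathbb{N}^{k}$ with $A^{m}(w,v)>0$, the identity $(A^{m}x)_{w}=e^{\beta r\cdot m}x_{w}=0$ forces $x_{v}=0$, a contradiction. For a component $C$ maximal in $\mathrm{supp}(x)$ in the quotient of $\leq$, the eigenvalue equations restrict cleanly to $C$: any term with $w$ strictly above $C$ has $w\notin\mathrm{supp}(x)$ by maximality, and any term with $w\in\overline{C}\setminus C$ is ruled out by the argument of the previous paragraph. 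The positive eigenvector characterisation of the spectral radius for non-negative matrices then forces $\rho(A_{i}^{C})=e^{\beta r_{i}}$ for all $i$, and the analysis in \cite{C} completes the verification that $C$ is $(\{1,\dots,k\},\beta,r)$-subharmonic and produces the unique coefficient $t_{C}>0$ such that $x-t_{C}z^{C}\geq 0$ is a common eigenvector vanishing on $C$. Iterating on the smaller-support eigenvector — a finite process since $\Lambda^{0}$ is finite — yields the decomposition, and uniqueness follows because at every stage the maximal components of the support and the coefficients $t_{C}$ are pinned down by $x$ alone.

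The main obstacle is the joint Perron-Frobenius input from \cite{C}: promoting a Perron eigenvector supported on the single component $C$ to a vector strictly positive throughout $\overline{C}$ while keeping the relevant joint eigenspace one-dimensional is where the gap condition \ref{subh1} of Definition \ref{d51} is used essentially, and this is the heart of the construction.
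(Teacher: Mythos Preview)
Your approach coincides with the paper's in spirit: both defer the substance to the joint Perron--Frobenius results in \cite{C}. However, you skip a reduction that the paper handles explicitly. The constructions in \cite{C} are carried out for $k$-graphs \emph{without sources}, so they cannot be invoked directly for an arbitrary finite $\Lambda$. The paper first establishes the lemma assuming $\Lambda$ has no sources, quoting \cite{C} (Lemmas 7.6, 7.7, 7.11, Corollary 7.10, Proposition 7.9 there), and then reduces the general case: since $\rho(A_i^C)=e^{\beta r_i}>0$ for each $i$, one argues that $C\subseteq\widetilde{\Lambda^0}:=\Lambda^0(\{1,\dots,k\})$, and as $\widetilde{\Lambda^0}$ is closed (Lemma \ref{l1}) also $\overline{C}\subseteq\widetilde{\Lambda^0}$. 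The paper then checks that $\widetilde{\Lambda}:=\Lambda\widetilde{\Lambda^0}$ is a finite $k$-graph without sources in which $C$ remains $(\{1,\dots,k\},\beta,r)$-subharmonic, so \cite{C} applies legitimately. Likewise, any common eigenvector $x$ satisfies $x_v=e^{-\beta r\cdot n}(A^n x)_v=0$ whenever $v\Lambda^n=\emptyset$, so $x$ vanishes off $\widetilde{\Lambda^0}$ and the decomposition also reduces to $\widetilde{\Lambda}$. Your proposal restricts straight to $\overline{C}$ and appeals to \cite{C} without verifying that the no-sources hypothesis is met; this is where the argument as written is incomplete.

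Your peeling argument for the decomposition is fine in outline and is essentially what Proposition 7.9 of \cite{C} packages; the paper quotes that result rather than rerunning the induction.
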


\begin{proof}
Since the construction of the vectors in \cite{C} is for graphs with no sources, we will start by proving the Lemma when $\Lambda$ is without sources. Let $C$ be a $(\{1, \dots , k\}, \beta ,r)$-subharmonic component in $\Lambda$, then $C$ satisfies the criterion in Lemma 7.11 in \cite{C}. Choosing a finite set $F\subseteq \mathbb{N}^{k}\setminus \{0\} $ with the property that for all $v,w\in \Lambda^{0}$ then $\sum_{n\in F}A^{n}(v,w)>0$ if and only if $v\Lambda^{l}w \neq \emptyset$ for some $l\in \mathbb{N}^{k} \setminus \{0\}$ (such a set is called \emph{well chosen} in \cite{C}), Corollary 7.10 implies that $C$ in the terminology of \cite{C} is \emph{$F$-harmonic}. By Lemma 7.6 in \cite{C} a $F$-harmonic component gives rise to a unique vector $\chi^{C} \in [0, \infty[^{\Lambda^{0}}$ of unit $1$-norm, and by Lemma 7.6 and Lemma 7.7 $\chi^{C}$ satisfies \ref{22222} and \ref{22} and $\chi_{v}^{C}>0$ for $v\in \overline{C}$, proving existence of $z^{C}$. If $z'\in [0, \infty[^{\Lambda^{0}}$ is a vector of unit $1$-norm satisfying \ref{22222} and \ref{22}, then by Proposition 7.9 in \cite{C} there is a unique collection of $F$-harmonic components $\mathcal{C}$ such that $z'$ is a convex combination of the vectors $\chi^{D}$, $D\in \mathcal{C}$, and furthermore $\rho(A^{D})=e^{\beta r}$ for each $D\in \mathcal{C}$. Combining \ref{22222} and the fact that $\chi^{D}$ is positive on $\overline{D}$, we get that each $D\in \mathcal{C}$ satisfies $D\subseteq \overline{C}$, but then condition \ref{subh1} and \ref{subh2} in Definition \ref{d51} combined with $\rho(A^{D})=e^{\beta r}$ imply that $\mathcal{C}=\{C\}$, so $z'=\chi^{C}$, proving uniqueness. For the unique decomposition of $x$, notice that by Lemma 7.11 in \cite{C} a component $C$ is $(\{1, \dots , k\}, \beta ,r)$-subharmonic if and only if $\rho(A^{C})=e^{\beta r}$ and $C$ is $F$-harmonic. The statement therefore follows from Proposition 7.9 in \cite{C} and the construction of the vectors $z^{C}$.

Assume now that $\Lambda$ is a general finite $k$-graph, and let $C$ be a $(\{1, \dots , k\}, \beta ,r)$-subharmonic component in $\Lambda$. Then $A_{i}^{C} \neq 0$ for all $i$, so taking a $v\in C$ and a $i\in \{1, \dots , k\}$ there is a $\mu \in v\Lambda C$ with $d(\mu)_{i} > 0$. The factorisation property then implies that $v\Lambda^{e_{i}}C \neq \emptyset$. Since this is true for all $v\in C$, it follows that $C \subseteq \widetilde{\Lambda^{0}}:=\Lambda^{0}(\{1, \dots , k\})$, and hence by Lemma \ref{l1} it follows that $\overline{C} \subseteq \widetilde{\Lambda^{0}}$. Since $\widetilde{\Lambda^{0}}$ is closed we can consider the finite $k$-graph $\widetilde{\Lambda}:=\Lambda \widetilde{\Lambda^{0}}$, which has vertex matrices $A_{1}^{\widetilde{\Lambda^{0}}}, \dots , A_{k}^{\widetilde{\Lambda^{0}}}$. To see that $\widetilde{\Lambda}$ has no sources take $v\in \widetilde{\Lambda^{0}}$, $m\in \mathbb{N}^{k}$ and $\lambda_{l} \in v\Lambda^{le_{1}+\cdot +le_{k}+m}$ for each $l\in \mathbb{N}$. Since $\lambda_{l}(0,m)\in v\Lambda^{m}$ for each $l\in \mathbb{N}$, there is a $\lambda\in v\Lambda^{m}$ with $\lambda_{l}(0,m)=\lambda$ for infinitely many $l$, which implies that $s(\lambda)\in \widetilde{\Lambda^{0}}$ and hence $v\widetilde{\Lambda}^{m}\neq \emptyset$. Since components in $\widetilde{\Lambda}$ are exactly components in $\Lambda$ contained in $\widetilde{\Lambda^{0}}$, $C$ is a $(\{1, \dots , k\}, \beta ,r)$-subharmonic component in $\tilde{\Lambda}$, so there exists a unique vector $\tilde{z}^{C}\in [0, \infty[^{\widetilde{\Lambda^{0}}}$ of unit $1$-norm with $\tilde{z}^{C}_{v}=0$ when $v\in \widetilde{\Lambda^{0}} \setminus\overline{C}$ and $A_{i}^{\widetilde{\Lambda^{0}}}\tilde{z}^{C}=e^{\beta r_{i}}\tilde{z}^{C}$ for all $i$. Furthermore $\tilde{z}^{C}_{v} >0$ for $v\in \overline{C}$. It is now straightforward to check that defining $z^{C} \in [0, \infty[^{\Lambda^{0}}$ by $z^{C} |_{\widetilde{\Lambda^{0}}}=\tilde{z}^{C}$ and $z^{C}_{v}=0$ for $v\notin \widetilde{\Lambda^{0}}$ gives the desired vector.

Assume $z'\in [0, \infty[^{\Lambda^{0}}$ satisfies \ref{22222} and \ref{22} and is of unit $1$-norm, then  $z'|_{\widetilde{\Lambda^{0}}}\in [0, \infty[^{\widetilde{\Lambda^{0}}}$ also has unit $1$-norm. By \ref{22222} $(z'|_{\widetilde{\Lambda^{0}}})_{v}=0$ for $v\in \widetilde{\Lambda^{0}} \setminus\overline{C}$ and: 
\begin{equation*}
A_{i}^{\widetilde{\Lambda^{0}}}z'|_{\widetilde{\Lambda^{0}}}= (A_{i} z')|_{\widetilde{\Lambda^{0}}} =e^{\beta r_{i}}z'|_{\widetilde{\Lambda^{0}}} \qquad \text{ for all } i.
\end{equation*}
It follows that $z'|_{\widetilde{\Lambda^{0}}}=\tilde{z}^{C}$, which proves uniqueness.

For the last statement let $x\in [0, \infty[^{\Lambda^{0}}$ of unit $1$-norm satisfy $A_{i}x=e^{\beta r_{i}} x$ for all $i$. If $v\Lambda^{n} = \emptyset$ for some $n\in \mathbb{N}$ then $x_{v}=e^{-\beta r\cdot n} (A^{n}x)_{v}=0$, so $x_{v}=0$ for $v\notin\widetilde{\Lambda^{0}}$ and hence $A_{i}^{\widetilde{\Lambda^{0}}} x|_{\widetilde{\Lambda^{0}}}=e^{\beta r_{i}} x|_{\widetilde{\Lambda^{0}}}$ for all $i$. Using the Lemma on $x|_{\widetilde{\Lambda^{0}}}$ we get a unique collection $\mathcal{C}$ of $(\{1, \dots , k\}, \beta, r)$-subharmonic component in $\tilde{\Lambda}$ with corresponding unique vectors $\tilde{z}^{C}$, $C\in \mathcal{C}$, and numbers $t_{C}>0$, $C\in \mathcal{C}$, such that
\begin{equation*}
x|_{\widetilde{\Lambda^{0}}} = \sum_{C \in \mathcal{C}} t_{C} \tilde{z}^{C} .
\end{equation*}
Since $C$ is an $(\{1, \dots , k\}, \beta, r)$-subharmonic component in $\tilde{\Lambda}$ if and only if it is a $(\{1, \dots , k\}, \beta, r)$-subharmonic component in $\Lambda$, it follows from the definition of $z^{C}$ that we have a unique decomposition: 
\begin{equation*}
x = \sum_{C \in \mathcal{C}} t_{C} z^{C}
\end{equation*}
which proves the Lemma.
\end{proof}

\begin{lemma}\label{l52}
Let $\Lambda$ be a finite $k$-graph, $r\in \mathbb{R}^{k}$, $\beta \in \mathbb{R}$, $I \subseteq \{1, \dots , k\}$ and $C$ be a $(I,\beta , r )$-subharmonic component. There exists a unique vector $x^{C}\in [0, \infty[^{\Lambda^{0}}$ of unit $1$-norm satisfying \ref{ee1}. and \ref{ee2}.:
\begin{enumerate}
\item \label{ee1}$x^{C}_{v} =0$ for $v \notin \overline{C}^{I}$.
\item \label{ee2}$A_{i} x^{C} = e^{\beta r_{i}} x^{C}$ for all $i\in I$.
\end{enumerate}
Furthermore $x_{v}^{C}>0$ for $v\in \overline{C}^{I}$. 
\end{lemma}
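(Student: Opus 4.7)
The plan is to reduce the statement to Lemma \ref{l502} applied to the sub-$|I|$-graph $\Lambda_{I}$. Concretely, view $\Lambda_{I}$ as an $|I|$-graph whose degree functor takes values in $\mathbb{N}^{I}$ and whose vertex matrices are the $\{A_{i}\}_{i\in I}$, regarded as matrices over $(\Lambda_{I})^{0}=\Lambda^{0}$. Under this identification, the relation $\sim$ on $(\Lambda_{I})^{0}$ is exactly $\sim_{I}$ on $\Lambda^{0}$, so the $\sim$-equivalence class of $C$ in $\Lambda_{I}$ is $C$ and the closure of $C$ in $\Lambda_{I}$ coincides with $\overline{C}^{I}$. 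The reason for doing this is that condition \ref{ee2} in the Lemma only involves the matrices indexed by $i\in I$, so it is natural to strip away the remaining coordinates and treat $C$ as a fully-indexed subharmonic component in $\Lambda_{I}$.

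Next I verify that $C$ is a $(\{1,\dots,|I|\},\beta,r|_{I})$-subharmonic component of $\Lambda_{I}$ in the sense of Definition \ref{d51}. Condition \ref{subh1} for $\Lambda_{I}$ reads: for every $\sim_{I}$-class $D\neq C$ contained in $\overline{C}^{I}$, one has $\rho(A^{D})_{I}\lneq \rho(A^{C})_{I}$ in $\mathbb{R}^{I}$; this is precisely the hypothesis \ref{subh1} in the original setting. Condition \ref{subh2} becomes $\rho(A_{i}^{C})=e^{\beta r_{i}}$ for all $i\in I$, which is again hypothesis \ref{subh2}. Condition \ref{subh3} is vacuous for $\Lambda_{I}$ because there its complementary index set is empty.

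Having verified these, I apply Lemma \ref{l502} to $\Lambda_{I}$, obtaining a unique vector $x^{C}\in [0,\infty[^{\Lambda^{0}}$ of unit $1$-norm such that $x^{C}_{v}=0$ for $v\notin \overline{C}^{I}$, $A_{i}x^{C}=e^{\beta r_{i}}x^{C}$ for every $i\in I$, and $x^{C}_{v}>0$ for $v\in \overline{C}^{I}$. This is exactly the existence and positivity statement of the present Lemma, and the uniqueness assertion likewise transfers directly from Lemma \ref{l502}.

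The only mildly subtle point — and the one to handle with care — is the dictionary between objects defined in $\Lambda$ and the same objects computed inside $\Lambda_{I}$: that $\sim$ in $\Lambda_{I}$ coincides with $\sim_{I}$ in $\Lambda$, that the closure in $\Lambda_{I}$ equals $\overline{\,\cdot\,}^{I}$, and that $A_{i}^{\Lambda^{0}}=A_{i}^{(\Lambda_{I})^{0}}$ for $i\in I$. Once this dictionary is spelled out, all of the hypotheses match on the nose and no further work is required; the heavy lifting has already been done in Lemma \ref{l502}.
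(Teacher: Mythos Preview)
Your approach is essentially identical to the paper's: reduce to Lemma~\ref{l502} by passing to the $|I|$-graph $\Lambda_{I}$, whose $\sim$-relation is $\sim_{I}$ and whose closure is $\overline{\,\cdot\,}^{I}$, and observe that conditions \ref{subh1}--\ref{subh2} of Definition~\ref{d51} translate verbatim while \ref{subh3} becomes vacuous.

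There is one small gap. Your reduction requires $\Lambda_{I}$ to be an $|I|$-graph to which Lemma~\ref{l502} applies, but the paper's conventions demand $k\geq 1$ for a $k$-graph, so the argument does not cover $I=\emptyset$. That case is genuinely allowed in the statement (see the remark after Definition~\ref{d51}): the $\sim_{\emptyset}$-classes are singletons $\{v\}$, $\overline{C}^{\emptyset}=\{v\}$, condition~\ref{ee2} is empty, and the unique unit vector supported on $\{v\}$ is the indicator of $v$. The paper disposes of this case in one line before invoking Lemma~\ref{l502} for $I\neq\emptyset$; you should do the same.
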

\begin{proof}
If $I=\emptyset$ then $C=\{v\}$ for some $v\in \Lambda^{0}$, and $x^{C}$ is the vector with $x^{C}_{w}=0$ for $w\neq v$ and $x^{C}_{v}=1$. If $I\neq\emptyset$ consider the finite graph $\Lambda_{I}$ with vertex matrices $(A_{i})_{i\in I}$. Setting $r_{I}=(r_{i})_{i\in I}\in \mathbb{R}^{I}$, it follows from Definition \ref{d51} that $C$ is a $(\{i\}_{i\in I}, \beta , r_{I})$-subharmonic component in the $I$-graph $\Lambda_{I}$, and hence we get the unique vector from Lemma \ref{l502}.
\end{proof}

\begin{prop}\label{p53}
Let $\Lambda$ be a finite $k$-graph, $r\in \mathbb{R}^{k}$, $\beta \in \mathbb{R}$, $I \sqcup J = \{1, \dots , k\}$ be a partition and $C$ be a $(I,\beta , r )$-subharmonic component. Denote by $x^{C} \in [0, \infty[^{\Lambda^{0}}$ the unique vector given in Lemma \ref{l52} using $C$. Set:
\begin{equation}\label{etildex}
\tilde{x}^{C}\vert_{\overline{C}}:=\prod_{j\in J}(1_{\overline{C}}-e^{-\beta r_{j}} A_{j}^{\overline{C}})^{-1} x^{C}\vert_{\overline{C}}
\end{equation}
and $\tilde{x}^{C}\vert_{\Lambda^{0}\setminus \overline{C}}=0$. Then $\tilde{x}^{C}$ is sub-invariant for the family $\{e^{-\beta r_{i}}A_{i}\}_{i=1}^{k}$.
\end{prop}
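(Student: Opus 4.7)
The plan is to reduce the sub-invariance condition to a computation on $\overline{C}$ and then use the eigenvalue property of $x^{C}$ from Lemma \ref{l52}(\ref{ee2}) to kill the factors indexed by $I$. First I would verify that the formula \eqref{etildex} is well defined and produces a non-negative vector: by Definition \ref{d51}(\ref{subh3}) we have $\rho(e^{-\beta r_{j}} A_{j}^{\overline{C}}) < 1$ for $j \in J$, so each factor $(1_{\overline{C}} - e^{-\beta r_{j}} A_{j}^{\overline{C}})^{-1}$ exists as a convergent Neumann series $\sum_{n \geq 0}(e^{-\beta r_{j}} A_{j}^{\overline{C}})^{n}$, which is a non-negative matrix. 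The matrices $A_{j}^{\overline{C}}$ commute pairwise, being restrictions of the commuting vertex matrices $A_{j}$, so the product is unambiguous and $\tilde{x}^{C} \in [0, \infty[^{\Lambda^{0}}$.

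The crucial structural point, which I would isolate as a short preparatory observation, is that $\overline{C}$ is \emph{hereditary} in the sense that $A_{i}(w,v) > 0$ and $v \in \overline{C}$ imply $w \in \overline{C}$; this is immediate from transitivity of $\leq$. Consequently, for any vector $y \in [0,\infty[^{\Lambda^{0}}$ supported on $\overline{C}$, the vector $A_{i} y$ is also supported on $\overline{C}$ and $(A_{i} y)|_{\overline{C}} = A_{i}^{\overline{C}} (y|_{\overline{C}})$. Applied inductively to each $K \subseteq \{1, \dots, k\}$, this shows that $\prod_{i \in K}(1_{\Lambda^{0}} - e^{-\beta r_{i}} A_{i})\tilde{x}^{C}$ is supported on $\overline{C}$ with restriction equal to $\prod_{i \in K}(1_{\overline{C}} - e^{-\beta r_{i}} A_{i}^{\overline{C}}) \tilde{x}^{C}|_{\overline{C}}$. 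Thus the sub-invariance assertion reduces to proving that this restricted product is non-negative for every $K \subseteq \{1, \dots, k\}$.

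For the final step I would split $K = K_{1} \sqcup K_{2}$ with $K_{1} := K \cap I$ and $K_{2} := K \cap J$. From Lemma \ref{l52}(\ref{ee2}) combined with the hereditary property, $(1_{\overline{C}} - e^{-\beta r_{i}} A_{i}^{\overline{C}}) x^{C}|_{\overline{C}} = 0$ for each $i \in I$. Since all the $A_{j}^{\overline{C}}$ commute I can rearrange factors freely: if $K_{1} \neq \emptyset$, commuting one factor indexed by $K_{1}$ past the inverses and onto $x^{C}|_{\overline{C}}$ makes the whole expression vanish, giving $0 \geq 0$; if $K_{1} = \emptyset$, the factors indexed by $K_{2} = K$ cancel with $\lvert K \rvert$ of the inverses in \eqref{etildex} and leave $\prod_{j \in J \setminus K}(1_{\overline{C}} - e^{-\beta r_{j}} A_{j}^{\overline{C}})^{-1} x^{C}|_{\overline{C}}$, which is non-negative because each remaining inverse is a Neumann series of non-negative matrices and $x^{C}|_{\overline{C}} \geq 0$. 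I do not expect any single step to be genuinely hard; the main care point is the book-keeping of the interplay between the full operators $A_{i}$, their compressions $A_{i}^{\overline{C}}$, and the support properties of the vectors involved, all of which rest on the hereditary nature of $\overline{C}$.
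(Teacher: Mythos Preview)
Your proof is correct and follows essentially the same route as the paper's: well-definedness via Neumann series from Definition \ref{d51}(\ref{subh3}), the hereditary property of $\overline{C}$ to reduce the computation to $\overline{C}$, and the case split on whether the index set meets $I$. One tiny expository point: you state the support/restriction identity for $y \in [0,\infty[^{\Lambda^{0}}$, but when you apply it inductively to the factors $(1_{\Lambda^{0}} - e^{-\beta r_{i}} A_{i})$ the intermediate vectors need not be non-negative; the identity is linear and holds for any $y$ supported on $\overline{C}$, so just drop the non-negativity hypothesis there (the paper sidesteps this by expanding the product as an alternating sum over subsets and applying the identity termwise to non-negative vectors).
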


\begin{proof}
If $J=\emptyset$ then $\tilde{x}^{C}=x^{C}$ which is clearly invariant for $\{e^{-\beta r_{i}}A_{i}\}_{i=1}^{k}$, so assume $J\neq \emptyset$. Notice first that condition \ref{subh3} in Definition \ref{d51} implies that $(1_{\overline{C}}-e^{-\beta r_{j}} A_{j}^{\overline{C}})^{-1} $ exists for each $j\in J$, so \eqref{etildex} makes sense. To express $\tilde{x}^{C}$ differently, assume that $J_{0}\subseteq J$ is an arbitrary non-empty subset, then for any $N \in \mathbb{N}^{J_{0}}$ we have that:
\begin{equation*}
\sum_{0\leq n \leq N} \prod_{j \in J_{0}}e^{-\beta r_{j}n_{j}}(A_{j}^{\overline{C}})^{n_{j}}
= \prod_{j \in J_{0}}\left(  \sum_{ n_{j}=0}^{N_{j}}  e^{-\beta r_{j}n_{j}}(A_{j}^{\overline{C}})^{n_{j}}\right)
\end{equation*} 
Hence as $N \to \infty$ in $\mathbb{N}^{J_{0}}$ we get that:
\begin{equation} \label{esum}
\prod_{j\in J_{0}}(1_{\overline{C}}-e^{-\beta r_{j}} A_{j}^{\overline{C}})^{-1}
=\sum_{n \in \mathbb{N}^{J_{0}}} \prod_{j \in J_{0}}e^{-\beta r_{j}n_{j}}(A_{j}^{\overline{C}})^{n_{j}}
\end{equation} 
Let $L \subseteq \{1, \dots , k\}$, to prove that $\tilde{x}^{C}$ is sub-invariant we then want to verify \eqref{eaiv} for the set $L$ and the family $\{e^{-\beta r_{i}}A_{i}\}_{i=1}^{k}$. Since $x^{C}\in [0, \infty[^{\Lambda^{0}}$ it follows from \eqref{esum} with $J=J_{0}$ that $\tilde{x}^{C}\in [0, \infty[^{\Lambda^{0}}$, proving \eqref{eaiv} when $L=\emptyset$. Assume then that $L\neq \emptyset$. If $y\in [0, \infty[^{\Lambda^{0}}$ is a vector with $y\lvert_{\Lambda^{0}\setminus\overline{C}}=0$ and $B\in M_{\Lambda^{0}}(\mathbb{R}_{+})$ has the property that $B(v,w)>0$ implies $v\leq w$, then it follows that:
\begin{equation} \label{eBy}
(By)\lvert_{\Lambda^{0}\setminus\overline{C}}=0
\quad , \quad (By)\lvert_{\overline{C}} = B^{\overline{C}} (y\lvert_{\overline{C}} )
\end{equation}
This implies that $(A^{n} \tilde{x}^{C})\vert_{\Lambda^{0}\setminus \overline{C}}=0$ for all $n\in \mathbb{N}^{k}$, and hence for $v\in \Lambda^{0}\setminus \overline{C}$ we get
\begin{equation*}
\left[\prod_{l \in L} (1_{\Lambda^{0}} -e^{-\beta r_{l}}A_{l})\tilde{x}^{C}\right]_{v} 
= \left[\sum_{S \subseteq L}(-1)^{\lvert S \rvert}\prod_{l \in S} e^{-\beta r_{l}}A_{l}\tilde{x}^{C}\right]_{v}
= \tilde{x}^{C}_{v} \geq 0.
\end{equation*}
Using the second equality in \eqref{eBy} we obtain:
\begin{align} \label{a1}
\nonumber\left[\prod_{l \in L} (1_{\Lambda^{0}} -e^{-\beta r_{l}}A_{l})\tilde{x}^{C}\right]_{\overline{C}} 
&= \left[\sum_{S \subseteq L}(-1)^{\lvert S \rvert}\prod_{l \in S} e^{-\beta r_{l}}A_{l}\tilde{x}^{C}\right]_{\overline{C}} \\
\nonumber&= \sum_{S \subseteq L}(-1)^{\lvert S \rvert}\prod_{l \in S} e^{-\beta r_{l}}A_{l}^{\overline{C}}(\tilde{x}^{C}\vert_{\overline{C}})\\
&= \prod_{l \in L} (1_{\overline{C}} -e^{-\beta r_{l}}A_{l}^{\overline{C}})(\tilde{x}^{C}\vert_{\overline{C}})
\end{align}
It now follows from \eqref{etildex} that if there is a $i\in L\cap I$, then since $A_{i}^{\overline{C}}x^{C}\vert_{\overline{C}}=(A_{i}x^{C})\vert_{\overline{C}}=e^{\beta r_{i}} x^{C}\vert_{\overline{C}}$ we get that $(1_{\overline{C}} -e^{-\beta r_{i}}A_{i}^{\overline{C}}) \tilde{x}^{C}\vert_{\overline{C}}=0$, and hence the expression in \eqref{a1} is zero. If $L\cap I = \emptyset$ then $L\subseteq J$, and:
\begin{equation*}
\prod_{l \in L} (1_{\overline{C}} -e^{-\beta r_{l}}A_{l}^{\overline{C}})(\tilde{x}^{C}\vert_{\overline{C}})
=\prod_{j\in J\setminus L}(1_{\overline{C}}-e^{-\beta r_{j}} A_{j}^{\overline{C}})^{-1} x^{C}\vert_{\overline{C}}
\end{equation*}
It follows from \eqref{esum} with $J_{0}=J\setminus L$ that this is a non-negative vector, and combined with \eqref{a1} this implies that $\tilde{x}^{C}$ is sub-invariant.
\end{proof}

\begin{defn} \label{d54}
When $C$ is a $(I,\beta , r )$-subharmonic component we set $y^{C}:=\tilde{x}^{C} / \lVert \tilde{x}^{C} \rVert_{1}$.
\end{defn}
The notation in Definition \ref{d54} is not well defined since a set $C \subseteq \Lambda^{0}$ can both be a $(I,\beta , r )$-subharmonic component and a $(I',\beta' , r')$-subharmonic component with $(I,\beta , r )\neq (I',\beta' , r')$. If however $C$ is $(I,\beta , r )$-subharmonic and $i\in I$ then $A_{i}(v,w)=0$ for $v\in \overline{C}$ and $w\in \overline{C}^{I}$, so we get that $\rho(A_{i}^{\overline{C}}) \geq \rho(A_{i}^{\overline{C}^{I}})$, and since $A_{i}(v,w)=0$ for $v\in C$ and $w\in \overline{C}^{I}$ we furthermore get that $\rho(A_{i}^{\overline{C}^{I}}) \geq \rho(A_{i}^{C})$, so by Definition \ref{d51} $C$ can not be $(I',\beta , r )$-subharmonic for an $I'\neq I$. Since we will formulate our results for some fixed values of $r$ and $\beta$, we therefore abuse notation and simply write $y^{C}$.

Proposition \ref{p53} implies that a $(I,\beta ,r)$-subharmonic component gives rise to a gauge-invariant $\beta$-KMS state $\omega$ for $\alpha^{r}$. To prove that all gauge-invariant states are given by convex combinations of states arising from such components, it becomes essential that we can discover the vector $x^{C}$ from $\omega$. To do this we need the following technical result.

\begin{lemma} \label{l55}
Let $\Lambda$ be a finite $k$-graph and let $\omega$ be a $\beta$-KMS state for $\alpha^{r}$ for some $r\in \mathbb{R}^{k}$ and $\beta \in \mathbb{R}$. Let $m$ be the measure on $\Lambda^{*}$ associated to $\omega$ and $I\sqcup J\subseteq \{1, \dots , k\}$ be some partition. For each $\lambda \in \Lambda $ the set:
\begin{equation*}
\lambda\Lambda^{\infty_{I},0} =\{x \in \Lambda^{*} \ : \ x=\lambda x' \text{ for some } x'\in  \Lambda^{\infty_{I},0} \} 
\end{equation*}  
is Borel and
\begin{equation*}
m(\lambda\Lambda^{\infty_{I},0})=e^{-\beta r\cdot d(\lambda)}m(s(\lambda)\Lambda^{\infty_{I},0}) .
\end{equation*}
\end{lemma}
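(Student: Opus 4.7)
The plan is in two stages: first identify $\lambda\Lambda^{\infty_I,0}$ as the Borel set $Z(\lambda) \cap \Lambda^{\infty_I, d(\lambda)_J}$, and then use the shift $\sigma^{d(\lambda)}$ as a measure-theoretic homeomorphism to reduce the measure computation to the cylinder sets, where the KMS condition applies directly.

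For the identification, given $x \in Z(\lambda)$ the factorisation property produces a unique $\sigma^{d(\lambda)}(x) \in s(\lambda)\Lambda^{*}$ with $x = \lambda \cdot \sigma^{d(\lambda)}(x)$ and $d(\sigma^{d(\lambda)}(x)) = d(x) - d(\lambda)$. Therefore $\sigma^{d(\lambda)}(x) \in \Lambda^{\infty_I,0}$ precisely when $d(x) = (\infty_I, d(\lambda)_J)$, so $\lambda\Lambda^{\infty_I,0} = Z(\lambda) \cap \Lambda^{\infty_I, d(\lambda)_J}$. Since $Z(\lambda)$ is clopen and $\Lambda^{\infty_I, d(\lambda)_J}$ is Borel by Proposition 3.2 of \cite{aHKR}, this set is Borel. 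The same description makes $\sigma^{d(\lambda)}|_{Z(\lambda)}$ a homeomorphism onto $s(\lambda)\Lambda^{*} = Z(s(\lambda))$: its inverse is the concatenation $y \mapsto \lambda y$, and on basic opens $(\sigma^{d(\lambda)})^{-1}(Z(\mu \setminus F)) = Z(\lambda\mu \setminus F)$. Under this homeomorphism $\lambda\Lambda^{\infty_I, 0}$ is carried bijectively onto $s(\lambda)\Lambda^{\infty_I, 0}$.

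Then I would prove the measure identity by showing that the two finite Borel measures $\nu_1(B) := m\bigl((\sigma^{d(\lambda)}|_{Z(\lambda)})^{-1}(B)\bigr)$ and $\nu_2(B) := e^{-\beta r\cdot d(\lambda)} m(B)$ on $s(\lambda)\Lambda^{*}$ coincide. The family $\{Z(\mu) : \mu \in s(\lambda)\Lambda\}$ is a $\pi$-system generating the Borel $\sigma$-algebra (any intersection $Z(\mu) \cap Z(\mu')$ is either empty or a single cylinder $Z(\nu)$ corresponding to a common refinement at degree $d(\mu) \vee d(\mu')$, and every basic open $Z(\mu \setminus F)$ lies in the generated $\sigma$-algebra), and on this $\pi$-system the KMS condition yields
\begin{equation*}
\nu_1(Z(\mu)) = m(Z(\lambda\mu)) = \omega(S_{\lambda\mu}S_{\lambda\mu}^{*}) = e^{-\beta r\cdot(d(\lambda)+d(\mu))}\omega(p_{s(\mu)}) = e^{-\beta r\cdot d(\lambda)} m(Z(\mu)) = \nu_2(Z(\mu)).
\end{equation*}
Since the total masses also agree, via $\nu_1(s(\lambda)\Lambda^{*}) = m(Z(\lambda)) = e^{-\beta r \cdot d(\lambda)} m(Z(s(\lambda))) = \nu_2(s(\lambda)\Lambda^{*})$, Dynkin's $\pi$-$\lambda$ theorem gives $\nu_1 = \nu_2$ on all Borel subsets, and the lemma follows on taking $B = s(\lambda)\Lambda^{\infty_I,0}$. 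I do not anticipate a serious obstacle; the only mildly delicate point is tracking the $(\mathbb{N}\cup\{\infty\})^{k}$-valued degree through the factorisation in order to justify the identification in the first step.
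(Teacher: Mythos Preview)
Your approach is correct in spirit and genuinely different from the paper's. The paper proves the lemma by a direct computation: it writes $\lambda\Lambda^{\infty_I,0}$ explicitly as a countable intersection of unions of sets $Z(\lambda\mu)\setminus\bigcup_{j\in J}\bigcup_{e} Z(\lambda\mu e)$, then derives an inclusion--exclusion identity (equation~\eqref{ep512}) expressing the measure of each such set as $\sum_{L\subseteq J}(-1)^{|L|}e^{-\beta r\cdot d(\eta)}e^{-\beta r\cdot e_L}(A^{e_L}\psi)_{s(\eta)}$, and finally observes that the factor $e^{-\beta r\cdot d(\lambda)}$ pulls out of the resulting limit. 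Your argument is cleaner and more conceptual: you identify $\lambda\Lambda^{\infty_I,0}=Z(\lambda)\cap\Lambda^{\infty_I,d(\lambda)_J}$ (which immediately gives the Borel claim), and then you prove the general fact that the pushforward of $m$ under $\sigma^{d(\lambda)}|_{Z(\lambda)}$ equals $e^{-\beta r\cdot d(\lambda)}m$ on all Borel subsets of $Z(s(\lambda))$. This gives more than the lemma asks for, with less computation; the paper's explicit formula~\eqref{ep512} is, however, reused in Lemma~\ref{l56}, so the hands-on approach is not wasted there.

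One point needs repair. Your claim that $\{Z(\mu):\mu\in s(\lambda)\Lambda\}$ is a $\pi$-system because ``any intersection $Z(\mu)\cap Z(\mu')$ is either empty or a single cylinder'' is false for $k\geq 2$: the intersection is $\bigsqcup_{(\kappa,\eta)\in\Lambda^{\min}(\mu,\mu')}Z(\mu\kappa)$, and $\Lambda^{\min}(\mu,\mu')$ can have several elements. The fix is painless: since $C(\Lambda^*)=\overline{\mathrm{span}}\{1_{Z(\mu)}:\mu\in\Lambda\}$ (this is recorded in Section~\ref{background}), your equality $\nu_1(Z(\mu))=\nu_2(Z(\mu))$ for all $\mu\in s(\lambda)\Lambda$ already gives $\int f\,d\nu_1=\int f\,d\nu_2$ for all $f\in C(Z(s(\lambda)))$, hence $\nu_1=\nu_2$ by the Riesz representation theorem. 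Alternatively, enlarge to the $\pi$-system of finite disjoint unions of cylinder sets and run Dynkin there.
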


\begin{proof}
To see that $\lambda\Lambda^{\infty_{I},0}$ is Borel set $p:=1^{I}\in \mathbb{N}^{I}$ if $I\neq \emptyset$ and set $p=0$ if $I=\emptyset$, then:
\begin{equation*}
\lambda\Lambda^{\infty_{I},0} =\bigcap_{n \in \mathbb{N}} \bigcup_{\mu\in s(\lambda)\Lambda^{n\cdot p}} \left[
Z(\lambda \mu) \setminus\left(  \bigcup_{j \in J} \bigcup_{e\in s(\lambda)\Lambda^{e_{j}} }Z(\lambda e) \right) \right]
\end{equation*}
Since we take the union over decreasing sets we get that:
\begin{align} \label{ep511}
\nonumber m(\lambda\Lambda^{\infty_{I},0})&= \lim_{n \to \infty} \sum_{\mu\in s(\lambda)\Lambda^{n\cdot p}}m\left(Z(\lambda \mu) \setminus\left(  \bigcup_{j \in J} \bigcup_{e\in s(\lambda)\Lambda^{e_{j}} }Z(\lambda e) \right) \right) \\
&=\lim_{n \to \infty} \sum_{\mu\in s(\lambda)\Lambda^{n\cdot p}}m\left(Z(\lambda \mu) \setminus\left(  \bigcup_{j \in J} \bigcup_{e\in s(\mu)\Lambda^{e_{j}} }Z(\lambda \mu e) \right) \right) 
\end{align}
Set $e_{L}=\sum_{l\in L} e_{l}$ for any $L \subseteq J$ and $\psi_{v}=\omega(p_{v})$ for $v\in \Lambda^{0}$. We claim that for any path $\eta \in \Lambda$:
\begin{equation} \label{ep512}
m\left(Z(\eta) \setminus\left(  \bigcup_{j \in J} \bigcup_{e\in s(\eta)\Lambda^{e_{j}} }Z(\eta e) \right)\right)
 = \sum_{L \subseteq J} (-1)^{\lvert L \rvert} e^{-\beta r \cdot d(\eta)} e^{-\beta r\cdot e_{L}}
 (A^{e_{L}}\psi)_{s(\eta)}
\end{equation}
The Lemma follows from \eqref{ep512}, because using it twice on \eqref{ep511} yields:
\begin{align*}
m(\lambda\Lambda^{\infty_{I},0})&= \lim_{n \to \infty} \sum_{\mu\in s(\lambda)\Lambda^{n\cdot p}}\sum_{L \subseteq J} (-1)^{\lvert L \rvert} e^{-\beta r \cdot d(\lambda \mu)} e^{-\beta r\cdot e_{L}}
 (A^{e_{L}}\psi)_{s(\mu)} \\
&= e^{-\beta r \cdot d(\lambda)}\lim_{n \to \infty} \sum_{\mu\in s(\lambda)\Lambda^{n\cdot p}}\sum_{L \subseteq J} (-1)^{\lvert L \rvert} e^{-\beta r \cdot d( \mu)} e^{-\beta r\cdot e_{L}}
 (A^{e_{L}}\psi)_{s(\mu)}  \\
&= e^{-\beta r \cdot d(\lambda)} m(s(\lambda)\Lambda^{\infty_{I},0})
\end{align*}
To prove \eqref{ep512} set $\mathcal{M}(e_{j}):=\bigcup_{e\in s(\eta)\Lambda^{e_{j}} }Z(\eta e)$. We use that $Z(\eta e)\subseteq Z(\eta)$ for each $e\in s(\eta)\Lambda^{e_{j}}$ and $j\in J$ to get the equality:
\begin{equation} \label{ep513}
1_{Z(\eta) \setminus\left(  \bigcup_{j \in J} \mathcal{M}(e_{j}) \right)}
= \prod_{j \in J}(1_{Z(\eta)}-1_{\mathcal{M}(e_{j})})
=\sum_{L\subseteq J} (-1)^{\lvert L \rvert} \prod_{j \in L} 1_{\mathcal{M}(e_{j})}
\end{equation}
Since $\prod_{j \in L} 1_{\mathcal{M}(e_{j})}=1_{\bigcap_{j\in L}\mathcal{M}(e_{j})}$ we get \eqref{ep512} by combining \eqref{ep513} with:
\begin{align*}
m\left(\bigcap_{j\in L}\mathcal{M}(e_{j})\right)
&=m\left(\bigcup_{e\in s(\eta)\Lambda^{e_{L}} }Z(\eta e)\right)
= \sum_{e\in s(\eta)\Lambda^{e_{L}} } m(Z(\eta e))\\
&=e^{-\beta r \cdot (d(\eta)+e_{L})} (A^{e_{L}}\psi)_{s(\eta)}
\end{align*}
\end{proof}

\begin{lemma} \label{l56}
Let $\Lambda$ be a finite $k$-graph, $r\in \mathbb{R}^{k}$ and $\beta \in \mathbb{R}$. Let $I \subseteq \{1, \dots , k\}$ and $C$ be a $(I,\beta , r )$-subharmonic component. Let $\omega$ be the KMS state associated to the vector $y^{C}$, and let $m_{C}$ be the measure associated to $\omega$, then
\begin{equation*}
 x^{C}_{v} =\lVert\tilde{x}^{C} \rVert_{1} m_{C}(v\Lambda^{\infty_{I}, 0}) \qquad \text{ for all } v \in \Lambda^{0}.
\end{equation*}
\end{lemma}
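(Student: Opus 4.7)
The plan is to compute $m_C(v\Lambda^{\infty_I, 0})$ explicitly by tracing through the construction of $\omega_{y^C}$ given in the proof of Proposition \ref{p43}, and then to use the definition of $\tilde{x}^C$ in \eqref{etildex} to identify the answer with $\lVert \tilde{x}^C \rVert_1^{-1} x^C_v$.

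The first step is to verify that $y^C$ satisfies the hypotheses of Corollary \ref{c43} for the partition $I \sqcup J$. The key observation is that $A_i$ preserves the space of vectors supported on $\overline{C}$, since $A_i(v,w) > 0$ forces $v \leq w$ and so $w \in \overline{C}$ implies $v \in \overline{C}$; consequently $A_i$ acts on such vectors exactly as $A_i^{\overline{C}}$. Combined with $A_i x^C = e^{\beta r_i} x^C$ for $i \in I$ from Lemma \ref{l52} and the commutativity of the $A_i^{\overline{C}}$, the definition \eqref{etildex} immediately yields $A_i \tilde{x}^C = e^{\beta r_i} \tilde{x}^C$ for $i \in I$. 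For $j \in J$, condition \ref{subh3} of Definition \ref{d51} gives $\rho(A_j^{\overline{C}}) < e^{\beta r_j}$, so $(e^{-\beta r_j} A_j)^n \tilde{x}^C \to 0$. Normalising by $\lVert \tilde{x}^C \rVert_1$ transfers these properties to $y^C$.

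By Proposition \ref{p43} and Corollary \ref{c43}, the measure $m_C$ then coincides with the measure $\nu$ built in the proof of Proposition \ref{p43}, is concentrated on $\partial^I\Lambda = \bigsqcup_{m \in \mathbb{N}^J}\Lambda^{\infty_I, m}$, and when $J \neq \emptyset$ decomposes as $m_C = \sum_{m \in \mathbb{N}^J}\nu^m$ with $\nu^m$ supported on $\Lambda^{\infty_I, m}$ (if $J = \emptyset$ we simply have $m_C = \nu^0$). Since $v\Lambda^{\infty_I, 0} = Z(v) \cap \Lambda^{\infty_I, 0}$ is disjoint from $\Lambda^{\infty_I, m}$ for $m \neq 0$, only $\nu^0$ contributes, and specialising \eqref{ecirkel} to $\mu = v$ and $m = 0$ yields
\begin{equation*}
m_C(v\Lambda^{\infty_I, 0}) = \nu^0(Z(v)) = \phi_v,
\end{equation*}
where $\phi = \prod_{j \in J}(1_{\Lambda^0} - e^{-\beta r_j} A_j) y^C$ (with the convention $\phi = y^C$ when $J = \emptyset$).

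The remaining step is to identify $\phi$ with $\lVert \tilde{x}^C \rVert_1^{-1} x^C$. Invoking once more that $A_j$ preserves support on $\overline{C}$, the product $\prod_{j \in J}(1_{\Lambda^0} - e^{-\beta r_j} A_j)\tilde{x}^C$ vanishes outside $\overline{C}$ and on $\overline{C}$ coincides with $\prod_{j \in J}(1_{\overline{C}} - e^{-\beta r_j} A_j^{\overline{C}})\tilde{x}^C\vert_{\overline{C}}$, which equals $x^C\vert_{\overline{C}}$ by \eqref{etildex}. Since $x^C$ is supported on $\overline{C}^I \subseteq \overline{C}$, this extends to the global equality $\prod_{j \in J}(1_{\Lambda^0} - e^{-\beta r_j} A_j)\tilde{x}^C = x^C$, so $\phi = \lVert \tilde{x}^C \rVert_1^{-1} x^C$ and the desired identity $x^C_v = \lVert \tilde{x}^C \rVert_1 \, m_C(v\Lambda^{\infty_I, 0})$ follows. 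There is no serious obstacle in the argument; the only real subtlety is the consistent bookkeeping of supports needed to pass freely between $A_j$ and $A_j^{\overline{C}}$ at each step.
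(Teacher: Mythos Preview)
Your proof is correct, but it takes a somewhat different route from the paper. The paper's argument uses the inclusion--exclusion formula \eqref{ep512} from Lemma~\ref{l55}: since $m_C(\partial^I\Lambda)=1$, the set $v\Lambda^{\infty_I,0}$ has the same $m_C$-measure as $Z(v)\setminus\bigl(\bigcup_{j\in J}\bigcup_{e\in v\Lambda^{e_j}}Z(e)\bigr)$, and \eqref{ep512} expresses this as the alternating sum $\sum_{L\subseteq J}(-1)^{|L|}e^{-\beta r\cdot e_L}(A^{e_L}y^C)_v$, which is then recognised as $\|\tilde{x}^C\|_1^{-1}\bigl[\prod_{j\in J}(1_{\overline{C}}-e^{-\beta r_j}A_j^{\overline{C}})\tilde{x}^C|_{\overline{C}}\bigr]_v=\|\tilde{x}^C\|_1^{-1}x^C_v$. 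You instead re-open the construction in the proof of Proposition~\ref{p43}: after verifying that $y^C$ meets the hypotheses of Corollary~\ref{c43}, you use the decomposition $m_C=\sum_{m}\nu^m$ with $\nu^m$ supported on $\Lambda^{\infty_I,m}$ to read off $m_C(v\Lambda^{\infty_I,0})=\nu^0(Z(v)\cap\Lambda^{\infty_I,0})=\phi_v$ directly (equation \eqref{eqnice} would serve here just as well as \eqref{ecirkel}), and then identify $\phi$ with $\|\tilde{x}^C\|_1^{-1}x^C$. Your approach is slightly more elementary in that it bypasses the inclusion--exclusion of Lemma~\ref{l55} entirely; the paper's approach is more modular, treating Proposition~\ref{p43} as a black box and using only the externally visible formula \eqref{ep512}, which is stated for an arbitrary KMS measure. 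Both arrive at the same final identification via the support bookkeeping you describe.
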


\begin{proof}
Since $m_{C}(\partial^{I} \Lambda)=1$ by Corollary \ref{c43}, the formula \eqref{ep512} implies:
\begin{equation*}
m_{C}(v\Lambda^{\infty_{I}, 0})
=m_{C} \left( Z(v)\setminus \left( \bigcup_{j\in J} \bigcup_{e\in v\Lambda^{e_{j}}} Z(e) \right)    \right)
=\sum_{L \subseteq J} (-1)^{\lvert L \rvert} e^{-\beta r\cdot e_{L}} (A^{e_{L}} y^{C})_{v}
\end{equation*}
for each $v\in \Lambda^{0}$. When $v\notin \overline{C}$ then $(A^{e_{L}} y^{C})_{v}=0$ for each $L$, and hence $m_{C}(v\Lambda^{\infty_{I}, 0})=0$. When $v\in \overline{C}$ then $(A^{e_{L}} y^{C})_{v}=((A^{\overline{C}})^{e_{L}} y^{C}|_{\overline{C}})_{v}$ and hence:
\begin{align*}
m_{C}(v\Lambda^{\infty_{I}, 0}) 
&=\sum_{L \subseteq J} (-1)^{\lvert L \rvert}
e^{-\beta r\cdot e_{L}} ((A^{\overline{C}})^{e_{L}} y^{C}|_{\overline{C}})_{v} \\
&=\lVert \tilde{x}^{C} \rVert_{1}^{-1}\left[\prod_{j \in J} (1_{\overline{C}}-e^{-\beta r_{j}}A_{j}^{\overline{C}}) \tilde{x}^{C}\vert_{\overline{C}}\right]_{v} 
=\lVert \tilde{x}^{C} \rVert_{1}^{-1} x^{C}_{v}
\end{align*}
Since $x^{C}_{v}=0$ for $v\notin \overline{C}$ this proves the Lemma.
\end{proof}

By Proposition \ref{p32} we already have a decomposition of a general sub-invariant vector, so we can focus on decomposing the vectors appearing in Proposition \ref{p32}.

\begin{prop} \label{p57}
Let $\Lambda$ be a finite $k$-graph, $r\in \mathbb{R}^{k}$ and $\beta \in \mathbb{R}$. Let $\psi \in [0, \infty[^{\Lambda^{0}}$ be a sub-invariant vector for the family $\{e^{-\beta r_{i}} A_{i}\}_{i=1}^{k}$ and assume that there is a partition $I \sqcup J= \{1, \dots , k\}$ such that $e^{-\beta r_{i}}A_{i}\psi=\psi$ for $i \in I$ and $\lim_{n \to \infty}(e^{-\beta r_{j}}A_{j})^{n}\psi=0$ for $j \in J$. There exists a unique collection of $(I, \beta, r)$-subharmonic components $\mathcal{C}$ and numbers $t_{C}>0$, $C \in \mathcal{C}$, such that
\begin{equation*}
\psi = \sum_{C \in \mathcal{C}} t_{C} y^{C} .
\end{equation*}
\end{prop}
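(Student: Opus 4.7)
The plan is to apply $\prod_{j \in J}(1_{\Lambda^{0}} - e^{-\beta r_{j}} A_{j})$ to $\psi$ to produce an auxiliary vector $\phi$ that is invariant under $\{e^{-\beta r_{i}} A_{i}\}_{i\in I}$, decompose $\phi$ via Lemma \ref{l502}, and then recover a decomposition of $\psi$ using the telescoping identity from the proof of Proposition \ref{p43}. First I would verify that $\phi \geq 0$ (sub-invariance of $\psi$), that $A_{i}\phi = e^{\beta r_{i}}\phi$ for $i \in I$ (commutativity of the vertex matrices and $I$-invariance of $\psi$), and that $(e^{-\beta r_{j}}A_{j})^{n}\phi \to 0$ for $j \in J$ (by expanding $\phi$ and using $(e^{-\beta r_{j}}A_{j})^{n}\psi \to 0$). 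The telescoping argument then yields
\[
\psi \;=\; \sum_{m \in \mathbb{N}^{J}} e^{-\beta r \cdot (0, m)}\, A^{(0, m)}\phi
\]
as a componentwise-non-negative convergent series.

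Next I would normalise $\phi$ and apply Lemma \ref{l502} inside the $|I|$-graph $\Lambda_{I}$ (whose vertex matrices are $(A_{i})_{i \in I}$) to obtain a unique collection $\mathcal{C}$ of $\sim_{I}$-classes and coefficients $s_{C} > 0$ with $\phi = \sum_{C \in \mathcal{C}} s_{C} x^{C}$, each $C$ satisfying conditions \ref{subh1} and \ref{subh2} of Definition \ref{d51} (interpreted inside $\Lambda_{I}$, with $\overline{C}$ there corresponding to $\overline{C}^{I}$ in $\Lambda$) and $x^{C}$ the vector supplied by Lemma \ref{l52}. Setting $\psi_{C} := s_{C}\sum_{m \in \mathbb{N}^{J}} e^{-\beta r \cdot (0, m)} A^{(0, m)} x^{C}$, a telescoping computation analogous to the one used for $\phi \mapsto \psi$ shows that each $\psi_{C}$ is sub-invariant, that $\prod_{j \in J}(1 - e^{-\beta r_{j}} A_{j})\psi_{C} = s_{C} x^{C}$, and that $\psi = \sum_{C}\psi_{C}$.

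The hard part will be showing that every $C \in \mathcal{C}$ also satisfies condition \ref{subh3} of Definition \ref{d51}. The key observation is that $\psi_{C}$ is strictly positive on all of $\overline{C}$: any $w \in \overline{C}$ is reached from $C$ via some path, and factoring that path as an $I$-part followed by a $J$-part produces $v \in \overline{C}^{I}$ and $m \in \mathbb{N}^{J}$ with $A^{(0, m)}(w, v) \geq 1$, so the corresponding term in the series bounds $\psi_{C}(w)$ below by $s_{C} e^{-\beta r \cdot (0, m)} x^{C}_{v} > 0$. On the other hand, $(e^{-\beta r_{j}} A_{j})^{n}\psi_{C}$ is a tail of the defining series and so tends to $0$. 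Suppose, for contradiction, that $\rho(A_{j}^{\overline{C}}) \geq e^{\beta r_{j}}$ for some $j \in J$, and pick a $j$-strongly connected component $D \subseteq \overline{C}$ with $\rho(A_{j}^{D}) = \rho(A_{j}^{\overline{C}})$. Topologically ordering the $j$-SCCs of $\overline{C}$ makes $e^{-\beta r_{j}} A_{j}^{\overline{C}}$ block lower triangular, the diagonal block of $(e^{-\beta r_{j}} A_{j}^{\overline{C}})^{n}$ at $D$ is $(e^{-\beta r_{j}} A_{j}^{D})^{n}$, and the off-diagonal contributions are non-negative, yielding
\[
\bigl((e^{-\beta r_{j}} A_{j}^{\overline{C}})^{n}\psi_{C}|_{\overline{C}}\bigr)\big|_{D} \;\geq\; (e^{-\beta r_{j}} A_{j}^{D})^{n}\,\psi_{C}|_{D}.
\]
Since $A_{j}^{D}$ is irreducible with spectral radius $\geq e^{\beta r_{j}}$ and $\psi_{C}|_{D}$ is strictly positive, Perron--Frobenius forces the right-hand side to stay bounded away from $0$, contradicting the decay of $(e^{-\beta r_{j}} A_{j})^{n}\psi_{C}$.

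Once condition \ref{subh3} is established, each $C$ is genuinely $(I, \beta, r)$-subharmonic, and the series defining $\psi_{C}$ then agrees on $\overline{C}$ with $\prod_{j \in J}(1_{\overline{C}} - e^{-\beta r_{j}} A_{j}^{\overline{C}})^{-1} x^{C}|_{\overline{C}} = \tilde{x}^{C}|_{\overline{C}}$, so $\psi_{C} = s_{C}\tilde{x}^{C}$ and $\psi = \sum_{C} t_{C} y^{C}$ with $t_{C} := s_{C}\lVert\tilde{x}^{C}\rVert_{1}$. For uniqueness, I would apply $\prod_{j \in J}(1 - e^{-\beta r_{j}} A_{j})$ to any purported decomposition $\psi = \sum_{C \in \mathcal{C}'} t'_{C} y^{C}$ of the same type; using $\prod_{j \in J}(1_{\overline{C}} - e^{-\beta r_{j}} A_{j}^{\overline{C}})\tilde{x}^{C}|_{\overline{C}} = x^{C}|_{\overline{C}}$ this produces $\phi = \sum_{C \in \mathcal{C}'} (t'_{C} / \lVert \tilde{x}^{C} \rVert_{1}) x^{C}$, and the uniqueness clause of Lemma \ref{l502} then identifies $\mathcal{C}' = \mathcal{C}$ and $t'_{C} = t_{C}$.
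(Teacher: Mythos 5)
Your proof is correct, and it reaches the conclusion by a genuinely different route than the paper in the two technically substantive steps. The paper produces the $I$-invariant vector by passing to the associated measure $m_{\psi}$ on path space and setting $\psi'_{v}=m_{\psi}(v\Lambda^{\infty_{I},0})$, which requires the measure-theoretic Lemmas \ref{l55} and \ref{l56}; you instead define $\phi=\prod_{j\in J}(1_{\Lambda^{0}}-e^{-\beta r_{j}}A_{j})\psi$ directly and verify its properties by pure linear algebra (these two vectors in fact coincide, as one can read off from the proof of Proposition \ref{p43}). More significantly, the paper establishes condition \ref{subh3} of Definition \ref{d51} by showing that the series $\sum_{n\in\mathbb{N}^{J}}e^{-\beta r\cdot(0,n)}A^{(0,n)}(v,u)$ converges for all $v,u\in\overline{C}$, whence the Neumann series for $(1_{\overline{C}}-e^{-\beta r_{j}}A_{j}^{\overline{C}})^{-1}$ converges and $\rho(A_{j}^{\overline{C}})<e^{\beta r_{j}}$; you instead argue by contradiction via Perron--Frobenius on a communication class $D\subseteq\overline{C}$ of $A_{j}^{\overline{C}}$ realising the spectral radius, using a positive left eigenvector to show $(e^{-\beta r_{j}}A_{j}^{D})^{n}\psi_{C}|_{D}$ cannot decay. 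Both arguments ultimately rest on the same two ingredients --- strict positivity of the candidate vector on $\overline{C}$ and decay of $(e^{-\beta r_{j}}A_{j})^{n}\psi$ --- but yours stays entirely inside finite-dimensional linear algebra, which makes this part of the argument self-contained and arguably cleaner, while the paper's measure-theoretic detour has the advantage of identifying $\psi'$ intrinsically from the KMS state (which the paper reuses elsewhere, e.g.\ in the proof of Theorem \ref{t64}). Your uniqueness argument (apply $\prod_{j\in J}(1_{\Lambda^{0}}-e^{-\beta r_{j}}A_{j})$ to a competing decomposition and invoke the uniqueness clause of Lemma \ref{l502}) is the algebraic shadow of the paper's measure-theoretic one and is equally valid. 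Two small points you should make explicit in a final write-up: the degenerate cases $I=\emptyset$ (where Lemma \ref{l502} cannot be applied literally inside $\Lambda_{I}$ and the decomposition of $\phi$ into point masses is trivial) and $\psi=0$, and the justification that the series $\sum_{m}e^{-\beta r\cdot(0,m)}A^{(0,m)}x^{C}$ converges before condition \ref{subh3} is known (it is dominated termwise by the series for $\psi$ since $s_{C}x^{C}\leq\phi$).
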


\begin{proof}
Let $\omega_{\psi}$ be the gauge-invariant $\beta$-KMS state for $\alpha^{r}$ given by $\psi$, and let $m_{\psi}$ be the associated measure on $\Lambda^{*}$. Define a vector $\psi'\in [0, \infty[^{\Lambda^{0}}$ by:
\begin{equation*}
\psi'_{v} = m_{\psi}(v\Lambda^{\infty_{I}, 0}) \qquad \text{for } v\in \Lambda^{0}.
\end{equation*}
If $I=\emptyset$ then we can uniquely write $\psi'$ as in \eqref{edeo} below where each $z^{C}$ is the indicator function for the $v$ with $C=\{v\}$. When $I\neq \emptyset$ it follows from Lemma \ref{l55} that for each $i\in I$ and $v\in \Lambda^{0}$:
\begin{equation*}
\psi'_{v} 
= m_{\psi}\left(\bigcup_{\mu \in v\Lambda^{e_{i}}} \mu\Lambda^{\infty_{I}, 0}\right)
=e^{-\beta r_{i}} \sum_{w \in \Lambda^{0}} A_{i}(v,w) m_{\psi}(w\Lambda^{\infty_{I}, 0})
=e^{-\beta r_{i}} (A_{i}\psi')_{v}
\end{equation*}
So $A_{i}\psi'=e^{\beta r_{i}}\psi'$ for each $i\in I$, and if $\psi'\neq 0$ considering the graph $\Lambda_{I}$ and the action given by $r_{I}=(r_{i})_{i\in I}$, Lemma \ref{l502} gives us a unique collection $\mathcal{C}$ of $(\{ i\}_{i\in I}, \beta, r_{I})$-subharmonic components in $\Lambda_{I}$ and numbers $t_{C}'>0$ for $C\in \mathcal{C}$ such that:
\begin{equation} \label{edeo}
\psi'= \sum_{C \in \mathcal{C}} t'_{C} z^{C}.
\end{equation}
If $\psi'=0$, we set $\mathcal{C}=\emptyset$ and \eqref{edeo} holds true.

We now want to prove that $e^{\beta r_{j}}> \rho(A_{j}^{\overline{C}})$ for all $j \in J$ and $C \in \mathcal{C}$, since that would imply that each $C \in \mathcal{C}$ was $(I, \beta , r)$-subharmonic and that each vector $z^{C}$ from \eqref{edeo} was equal to $x^{C}$ from Lemma \ref{l52}. When $J=\emptyset$ this is trivial, so assume this is not the case. Since $m_{\psi}(\partial^{I}\Lambda)=1$, we get for any $v \in \Lambda^{0}$ that:
\begin{equation*}
\psi_{v}=m_{\psi}(Z(v))= m_{\psi}\left(\bigcup_{n \in \mathbb{N}^{J}}v\Lambda^{\infty_{I}, n}\right)
=\sum_{n \in \mathbb{N}^{J}}m_{\psi}\left( v\Lambda^{\infty_{I}, n}\right)
\end{equation*}
Looking at just one of the terms in the sum and using Lemma \ref{l55} we get:
\begin{align*}
&m_{\psi}\left(v\Lambda^{\infty_{I}, n}\right) 
=\sum_{w \in \Lambda^{0}}m_{\psi}\left(\bigcup_{\mu\in v  \Lambda^{(0,n)}w} \mu \Lambda^{\infty_{I}, 0}  \right)
=\sum_{w \in \Lambda^{0}} \sum_{\mu \in v  \Lambda^{(0,n)}w} m_{\psi}\left( \mu \Lambda^{\infty_{I}, 0}  \right)  \\
&=\sum_{w \in \Lambda^{0}}  e^{-\beta r \cdot (0,n)} A^{(0,n)}(v,w)\psi'_{w}
= e^{-\beta r \cdot (0,n)} \left(A^{(0,n)}\psi' \right)_{v}
\end{align*}
Let $v \in \overline{C}$ and $w\in \overline{C}^{I}$ for a $C\in \mathcal{C}$, then $\psi'_{w}> 0$, and by the above calculations:
\begin{equation*}
\psi_{v}=\sum_{n \in \mathbb{N}^{J}}\sum_{u \in \Lambda^{0}}  e^{-\beta r \cdot (0,n)} A^{(0,n)}(v,u)\psi'_{u}
\end{equation*}
This implies that $\sum_{n \in \mathbb{N}^{J}} e^{-\beta r\cdot (0,n) } A^{(0,n)}(v,w)<\infty$ for such $w$ and $v$. Now let $u\in \overline{C}$, then there exists a $m\in \mathbb{N}^{J}$ and $w \in \overline{C}^{I}$ such that $A^{(0,m)}(u,w)>0$. For each $n \in \mathbb{N}^{J}$ and $v \in \overline{C}$ we have:
\begin{equation*}
e^{-\beta r\cdot (0,n)}A^{(0,n)}(v,u) A^{(0,m)}(u,w) \leq e^{\beta r \cdot (0,m)}\left(e^{-\beta r\cdot (0,n+m)}A^{(0,n+m)}(v,w) \right)
\end{equation*}
So $\sum_{n \in \mathbb{N}^{J}} e^{-\beta r\cdot (0,n) } A^{(0,n)}(v,u)<\infty$ for all $v,u\in \overline{C}$, and hence the sum $\sum_{l=0}^{\infty} (e^{-\beta r_{j}} A_{j}^{\overline{C}})^{l}$ converges for each $j\in J$, proving $\rho( A_{j}^{\overline{C}})<e^{\beta r_{j} }$.

We now know that each $C\in \mathcal{C}$ is a $(I, \beta ,r)$-subharmonic component, so the vectors $y^{C}$ exist for each such $C$. For any $v\in \Lambda^{0}$:
\begin{align*}
\psi_{v}&= \sum_{n \in \mathbb{N}^{J}}e^{-\beta r \cdot (0,n)} \left(A^{(0,n)}\psi' \right)_{v}
=\sum_{C \in \mathcal{C}} t_{C}' \sum_{n \in \mathbb{N}^{J}}e^{-\beta r \cdot (0,n)} \left(A^{(0,n)}x^{C} \right)_{v} \\
&= \sum_{C \in \mathcal{C}, v\in \overline{C}} t'_{C} \left[\left(\sum_{n \in \mathbb{N}^{J}} \prod_{j\in J} \left(e^{-\beta r_{j} }A_{j}^{\overline{C}}\right)^{n_{j}} \right)x^{C}\vert_{\overline{C}} \right]_{v}
=\sum_{C \in \mathcal{C}} t'_{C}\tilde{x}^{C}_{v}
=\sum_{C \in \mathcal{C}} t_{C}y^{C}_{v}
\end{align*}
where $t_{C}=t'_{C} \lVert \tilde{x}^{C} \rVert_{1}>0$. We have now proved that the decomposition exists.

\bigskip

To prove the uniqueness statement assume that $\mathcal{D}$ is a collection of $(I, \beta, r)$-subharmonic components and that there exists $s_{D}>0$ for each $D \in \mathcal{D}$ such that:
\begin{equation*}
\psi=\sum_{D\in \mathcal{D}} s_{D} y^{D} .
\end{equation*}
Let $m_{D}$ be the measure on $\Lambda^{*}$ associated to $y^{D}$ for each $D\in \mathcal{D}$, since $m_{\psi}=\sum_{D\in \mathcal{D}} s_{D} m_{D}$ it follows by Lemma \ref{l56} that considering these measures on $\Lambda^{\infty_{I}, 0}$ give:
\begin{equation*}
\psi'= \sum_{D\in \mathcal{D}} s_{D} \lVert \tilde{x}^{D} \rVert_{1}^{-1} x^{D}.
\end{equation*}
When $I\neq \emptyset$ then $\mathcal{D}$ can be considered a collection of $(\{i\}_{i\in I}, \beta, r_{I})$-subharmonic components in $\Lambda_{I}$ and $x^{D}$ are then by construction the unique vectors from Lemma \ref{l502}. For all $I$ uniqueness of the decomposition in \eqref{edeo} of $\psi'$ then gives $\mathcal{D} =\mathcal{C}$ and $s_{C} \lVert \tilde{x}^{C} \rVert_{1}^{-1} = t'_{C}$, and hence $t_{C}=s_{C}$, for each $C \in \mathcal{C}$.
\end{proof}

Combining Proposition \ref{p57}, Proposition \ref{p43} and Proposition \ref{p32} we get the following

\begin{thm} \label{t58}
Let $\Lambda$ be a finite $k$-graph, $r\in \mathbb{R}^{k}$ and $\beta \in \mathbb{R}$. For $I\subseteq \{1, \dots , k\}$ let $\mathcal{C}_{r}^{I}(\beta)$ be the $(I, \beta, r)$-subharmonic components and set:
\begin{equation*}
\mathcal{C}_{r}(\beta):=\bigsqcup_{I\subseteq \{1, \dots ,k\}} \mathcal{C}_{r}^{I}(\beta).
\end{equation*}
There is an affine bijective correspondence between functions $f: \mathcal{C}_{r}(\beta)\to [0,1]$ with $\sum_{C\in \mathcal{C}_{r}(\beta)} f(C)=1$ and the gauge-invariant $\beta$-KMS states for $\alpha^{r}$ on $\mathcal{T}C^{*}(\Lambda)$. A KMS state $\omega$ corresponding to a function $f$ is given by:
\begin{equation*}
\omega(S_{\lambda} S_{\mu}^{*}) = \delta_{\lambda, \mu} e^{-\beta r\cdot d(\lambda)} \psi_{s(\lambda)}
\end{equation*}
where:
\begin{equation*}
\psi = \sum_{C\in \mathcal{C}_{r}(\beta)} f(C) y^{C} .
\end{equation*}
\end{thm}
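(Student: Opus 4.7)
The plan is to assemble the theorem directly from the three main results already established: Proposition \ref{p32} (decomposition of a sub-invariant vector according to the Riesz partition into $I$-invariant/$J$-vanishing pieces), Proposition \ref{p57} (decomposition of each piece into a sum over $(I,\beta,r)$-subharmonic components), and Proposition \ref{p43} (bijection between sub-invariant vectors of unit $1$-norm and gauge-invariant $\beta$-KMS states). The affine bijection of the theorem will then be obtained as a composition of affine bijections.

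First I would handle the direction ``state $\mapsto$ function''. Starting from a gauge-invariant $\beta$-KMS state $\omega$, Lemma \ref{l41} produces a sub-invariant vector $\psi\in[0,\infty[^{\Lambda^{0}}$ of unit $1$-norm. Proposition \ref{p32} gives a unique decomposition $\psi=\sum_{I\subseteq\{1,\dots,k\}}h^{I}$ such that each $h^{I}$ is sub-invariant, invariant for $\{e^{-\beta r_{i}}A_{i}\}_{i\in I}$, and satisfies $\lim_{n\to\infty}(e^{-\beta r_{j}}A_{j})^{n}h^{I}=0$ for $j\notin I$. Applying Proposition \ref{p57} to each $h^{I}$ yields unique $(I,\beta,r)$-subharmonic components $\mathcal{C}^{I}\subseteq \mathcal{C}_{r}^{I}(\beta)$ and positive scalars $t_{C}$, $C\in\mathcal{C}^{I}$, with $h^{I}=\sum_{C\in\mathcal{C}^{I}}t_{C}y^{C}$. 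Combining these, and defining $f(C):=t_{C}$ if $C$ appears in some $\mathcal{C}^{I}$ and $f(C):=0$ otherwise, I obtain $\psi=\sum_{C\in\mathcal{C}_{r}(\beta)}f(C)y^{C}$. Since each $y^{C}$ has unit $1$-norm, the condition $\lVert\psi\rVert_{1}=1$ translates into $\sum_{C}f(C)=1$.

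Next I would construct the inverse direction. Given $f:\mathcal{C}_{r}(\beta)\to[0,1]$ with $\sum_{C}f(C)=1$, Proposition \ref{p53} guarantees that every $y^{C}$ is sub-invariant for $\{e^{-\beta r_{i}}A_{i}\}_{i=1}^{k}$. The set of sub-invariant vectors is a convex cone (the defining inequalities \eqref{eaiv} are preserved under non-negative linear combinations), so $\psi:=\sum_{C}f(C)y^{C}$ is sub-invariant of unit $1$-norm. Proposition \ref{p43} then produces the unique gauge-invariant $\beta$-KMS state $\omega_{\psi}$ with $\omega_{\psi}(p_{v})=\psi_{v}$, and the explicit formula
\begin{equation*}
\omega_{\psi}(S_{\lambda}S_{\mu}^{*})=\delta_{\lambda,\mu}e^{-\beta r\cdot d(\lambda)}\psi_{s(\lambda)}
\end{equation*}
is read off from the measure-theoretic construction in the proof of Proposition \ref{p43} (where $\nu(Z(\lambda))=e^{-\beta r\cdot d(\lambda)}\psi_{s(\lambda)}$ was shown).

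To conclude, I would verify that these two assignments are mutually inverse and both affine. Affinity is immediate since each step is a linear or affine operation. To see the two maps invert each other, the composition ``$f\mapsto\psi\mapsto\omega\mapsto\psi\mapsto f$'' collapses because the expansion $\psi=\sum_{C}f(C)y^{C}$ is, by the uniqueness parts of Propositions \ref{p32} and \ref{p57}, the only valid one; the other composition is equally immediate. The only subtle point, and the step I expect to be most delicate, is that the disjoint union $\mathcal{C}_{r}(\beta)=\bigsqcup_{I}\mathcal{C}_{r}^{I}(\beta)$ is genuinely disjoint, so that $f$ is unambiguously defined on $\mathcal{C}_{r}(\beta)$ and the expansion of $\psi$ does not mix contributions from different $I$. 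This is exactly the observation made following Definition \ref{d54}: a single component $C$ cannot be $(I,\beta,r)$-subharmonic for two different sets $I$, because condition \ref{subh2} of Definition \ref{d51} fixes $\rho(A_{i}^{C})=e^{\beta r_{i}}$ for $i\in I$, while condition \ref{subh3} forces $\rho(A_{j}^{\overline{C}})<e^{\beta r_{j}}$ and hence $\rho(A_{j}^{C})<e^{\beta r_{j}}$ for $j\notin I$. Once this disjointness is invoked, the theorem follows.
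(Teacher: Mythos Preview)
Your proposal is correct and follows exactly the paper's approach: the paper's proof is literally the one sentence ``Combining Proposition \ref{p57}, Proposition \ref{p43} and Proposition \ref{p32} we get the following,'' and your argument is a faithful unpacking of that combination. The one detail you leave slightly implicit---that for $C\in\mathcal{C}_{r}^{I}(\beta)$ the vector $y^{C}$ itself satisfies $e^{-\beta r_{i}}A_{i}y^{C}=y^{C}$ for $i\in I$ and $\lim_{n}(e^{-\beta r_{j}}A_{j})^{n}y^{C}=0$ for $j\notin I$, so that $\sum_{C\in\mathcal{C}_{r}^{I}(\beta)}f(C)y^{C}$ really is the $h^{I}$-piece in the Proposition \ref{p32} decomposition---is established in the proof of Proposition \ref{p53} (see the computation around \eqref{a1} and the support condition on $\overline{C}$ together with $\rho(A_{j}^{\overline{C}})<e^{\beta r_{j}}$), so no gap arises.
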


\begin{remark}
Notice that the face of the simplex of gauge-invariant KMS states given by components in $\mathcal{C}_{r}^{I}(\beta)$ corresponds to the face in the simplex of sup-invariant vectors of unit $1$-norm satisfying $A_{i}x=e^{\beta r_{i}}x$ for $i\in I$ and $(e^{-\beta r_{j}}A_{j})^{l}x\to 0$ for $l\to \infty$ for $j\notin I$, which again corresponds to the face in the simplex of quasi-invariant Borel probability measures $m$ with Radon Nikodym derivative $e^{-\beta c_{r}}$ satisfying $m(\partial^{I} \Lambda)=1$.
\end{remark}

\begin{remark}
Theorem \ref{t58} is already an improvement of the results obtained in \cite{FaHR}. To see this, notice that if $r$ and $\beta$ satisfies condition $2$ mentioned in the introduction and $\omega$ is a $\beta$-KMS state for $\alpha^{r}$, then for any $\lambda, \mu \in \Lambda$ and $t\in \mathbb{R}$ we get:
\begin{equation*}
\omega(S_{\lambda}S_{\mu}^{*})= \omega(\alpha_{t}^{r}(S_{\lambda}S_{\mu}^{*}))
=e^{i t r\cdot(d(\lambda)-d(\mu))}\omega(S_{\lambda}S_{\mu}^{*})
\end{equation*}
If $\omega(S_{\lambda}S_{\mu}^{*})\neq 0$ then this is only possible if $r\cdot(d(\lambda)-d(\mu)) =0$, implying that $d(\lambda)=d(\mu)$. So $S_{\lambda}S_{\lambda}^{*}$ and $S_{\mu}S_{\mu}^{*}$ are mutually orthogonal when $\lambda \neq \mu$, and since $\omega(S_{\lambda}S_{\mu}^{*})= e^{-\beta r\cdot d(\lambda)}\omega(S_{\mu}^{*}S_{\lambda})$, we get that $\omega(S_{\lambda}S_{\mu}^{*})=0$ when $\lambda \neq \mu$. This implies that $\omega$ is gauge-invariant, so Theorem \ref{t58} gives a complete description of the $\beta$-KMS states for finite $k$-graphs satisfying condition $2$ from the introduction.
\end{remark}

\section{Including the non gauge-invariant KMS states} \label{nongauge}

We are now interested in determining the KMS states that are not gauge-invariant. To do this we will use the ideas developed in \cite{C}. Theorem \ref{t58} gives us a complete description of the gauge-invariant KMS states, but by Lemma 3.1 in \cite{C} this is exactly the KMS states $\omega$ satisfying $\omega \circ P = \omega$. So in the terminology of \cite{N} we can consider Theorem \ref{t58} as a description of the quasi-invariant Borel probability measures with Radon-Nikodym cocycle $e^{-\beta c_{r}}$, where $c_{r}$ is the $1$-cocycle $c_{r}(x,n,y):=r \cdot n$. Hence we can use Theorem 5.2 in \cite{C} to obtain a description of all KMS states. We follow the outline and ideas in \cite{C} to do this, and start by analysing the relationship between the paths in $\Lambda^{*}$ and the measures associated to extremal KMS states.

\begin{defn}
We say that a path $x \in \Lambda^{*}$ eventually lies in $S$ for some set $S \subseteq \Lambda^{0}$, if there exists a $n \in \mathbb{N}^{k}$ with $n\leq d(x)$ such that $r(\sigma^{m}(x))\in S$ for all $m \in \mathbb{N}^{k}$ with $n \leq m \leq d(x)$.
\end{defn}

\begin{lemma}
Let $\Lambda$ be a finite $k$-graph, $r\in \mathbb{R}^{k}$, $\beta \in \mathbb{R}$ and let $I \subseteq \{1,2, \cdots , k\}$. If $D$ is an equivalence class in the relation $\sim_{I}$, then the set:
\begin{equation*}
N^{I}_{D}=\{ x \in \partial^{I}\Lambda \ : \ x \text{ eventually lies in }D \}
\end{equation*}
is a Borel set. If $D$ is a $(I,\beta ,r)$-subharmonic component, then the measure $m_{D}$ associated to the corresponding $\beta$-KMS state for $\alpha^{r}$ satisfies $m_{D}(N^{I}_{D})=1$.
\end{lemma}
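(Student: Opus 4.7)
My plan splits into Borel-measurability of $N^I_D$ (a cylinder-set computation) and the measure statement, which reduces to a geometric-decay argument driven by condition \ref{subh1} of Definition \ref{d51}.

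For measurability I would write
\begin{equation*}
N^I_D=\bigcup_{l\in \mathbb{N}^J}\ \bigcup_{n\in \mathbb{N}^k,\, n\le(\infty_I,l)}\ \bigcap_{m\in \mathbb{N}^k,\, n\le m\le(\infty_I,l)}\{x\in \Lambda^{\infty_I,l}:r(\sigma^m(x))\in D\}
\end{equation*}
and note that the innermost set equals the Borel set $\bigcup_{v\in D}\bigcup_{\mu\in \Lambda^m v}Z(\mu)\cap \Lambda^{\infty_I,l}$ (a finite union since $D$ is finite), while the outer index sets are countable, so $N^I_D$ is Borel.

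For the probability claim, Corollary \ref{c43} concentrates $m_D$ on $\partial^I\Lambda$, so I only need to handle each $\Lambda^{\infty_I,l}$ separately. Inspecting the proof of Proposition \ref{p43}, the auxiliary vector $\phi=\prod_{j\in J}(1_{\Lambda^0}-e^{-\beta r_j}A_j)y^D$ used there is, via the support calculation underlying equation \eqref{a1}, a positive scalar multiple of $x^D$; in particular $\phi$ is supported precisely on $\overline{D}^I$, and the cylinder formula from Proposition \ref{p43} then forces $x(m,m)\in \overline{D}^I$ for $m_D$-a.e.\ $x$ and every $m\le(\infty_I,l)$. Now decompose $\overline{D}^I$ into its finitely many $\sim_I$-components; for each component $E\subsetneq\overline{D}^I$ with $E\ne D$, condition \ref{subh1} supplies an index $i_0\in I$ with $\rho(A_{i_0}^E)<e^{\beta r_{i_0}}$. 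I would then exploit the block-triangular structure of $A_{i_0}$ on $\overline{D}^I$ to establish an estimate of the form
\begin{equation*}
m_D\bigl(\{x\in \Lambda^{\infty_I,l}:x((n,q),(n,q))\in E\}\bigr)\le C_{E,l}\,\rho_E^{\,n_{i_0}},
\end{equation*}
uniform in $q\le l$ and $n\in \mathbb{N}^I$, with some $\rho_E<1$. Borel--Cantelli then forces $x((n,q),(n,q))\notin E$ for $n_{i_0}$ sufficiently large, $m_D$-a.s., and taking the union over the finitely many $E$ and intersecting over the finitely many $q\le l$ delivers a threshold $n_0$ such that $x(m,m)\in D$ for every $m\ge n_0$ with $m\le(\infty_I,l)$, i.e.\ $x\in N^I_D$.

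The main obstacle is the geometric decay estimate above. Condition \ref{subh1} only grants strict inequality in at least one $I$-coordinate, so one has to pick the distinguished direction $i_0$ carefully per component $E$ and control the off-diagonal contributions from $D$ into $E$ using the eigenvector identity $A_{i_0}^{\overline{D}^I}x^D|_{\overline{D}^I}=e^{\beta r_{i_0}}x^D|_{\overline{D}^I}$. I expect standard Perron--Frobenius/Jordan-block theory applied to the non-negative matrix $A_{i_0}^E$ to deliver the bound, but tracking the multi-index structure and the finite $J$-direction parameters $q\le l$ throughout is the genuine technical heart of the argument.
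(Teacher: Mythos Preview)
Your Borel-measurability argument is fine and essentially equivalent to the paper's; both are cylinder-set bookkeeping, just organised differently. The divergence is in the measure statement, where the paper takes a completely different, much shorter route.

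The paper does not attempt any quantitative decay. It observes that the sets $N^{I}_{C}$, as $C$ ranges over the $\sim_{I}$-classes, form an \emph{invariant} Borel partition of $\partial^{I}\Lambda$. Since $y^{D}$ is extremal among sub-invariant unit vectors (this is the content of Theorem~\ref{t58}), the measure $m_{D}$ is extremal among quasi-invariant probability measures with cocycle $e^{-\beta c_{r}}$, hence sends invariant Borel sets to $\{0,1\}$. So exactly one $N^{I}_{C}$ has full measure, and the paper identifies $C=D$ by a short double-inclusion argument using Lemma~\ref{l55} and Lemma~\ref{l56}: from $m_{D}(v\Lambda^{\infty_{I},0})>0\iff v\in\overline{D}^{I}$ one reads off $D\subseteq\overline{C}^{I}$ and $C\subseteq\overline{D}^{I}$. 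No spectral estimates, no Borel--Cantelli.

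Your approach, by contrast, has a genuine gap at the Borel--Cantelli step. The bound you propose, $m_{D}(\{x:x((n,q),(n,q))\in E\})\le C_{E,l}\,\rho_{E}^{\,n_{i_{0}}}$, decays only in the single coordinate $i_{0}=i_{0}(E)$; summing over the full multi-index $n\in\mathbb{N}^{I}$ diverges, so Borel--Cantelli does not yield a threshold $n_{0}\in\mathbb{N}^{k}$ valid for \emph{all} $m\ge n_{0}$. One can repair this by first noting that $n\mapsto(\text{$\sim_{I}$-class of }x((n,l),(n,l)))$ is monotone for $\leq_{I}$ (since increments in $I$-directions give $\Lambda_{I}$-paths), hence every $x\in\Lambda^{\infty_{I},l}$ eventually lies in \emph{some} component $C_{x}$; then one only needs $m_{D}(\{C_{x}=E\})=0$, which follows from a \emph{limit} along the $i_{0}(E)$-axis rather than a summable series. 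But even then the decay estimate itself is delicate: condition~\ref{subh1} gives $\rho(A_{i_{0}}^{E})<e^{\beta r_{i_{0}}}$ only for $E$, not for the other blocks of $A_{i_{0}}^{\overline{E}^{I}}$ feeding into $E$, so the block-triangular bound you sketch need not produce geometric decay with base $\rho_{E}<1$ uniformly in the remaining $I$-coordinates. The paper's extremality argument sidesteps all of this.
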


\begin{proof}
If $I= \emptyset$ then $\partial^{I}\Lambda =\Lambda$, $D=\{v \}$ for some vertex $v \in \Lambda^{0}$ and the set $N^{I}_{D}$ is the countable set of paths $\lambda \in \Lambda$ with $s(\lambda)=v$, hence in particular it is a Borel set. Assume that $I \neq \emptyset$, and set:
\begin{equation} \label{ejohan}
N_{\overline{D}^{I}}^{I}:=\{ x \in  \partial^{I}\Lambda \ : x \text{ eventually  lies in } \overline{D}^{I} \} 
\end{equation} 
Take $x \in  \partial^{I}\Lambda$ with $x \notin N^{I}_{\overline{D}^{I}}$. In particular there is a $m \in \mathbb{N}^{k}$ with $(0, d(x)_{J})\leq m \leq d(x)$ such that $r(\sigma^{m}(x))\notin \overline{D}^{I}$. Setting $\lambda = x(0, m) \in \Lambda^{m}$ and letting $E:=\{e \in s(\lambda)\Lambda \ : \ d(e)=e_{j} \text{ for some } j \notin I\}$, we see that:
\begin{equation*}
\left[ Z(\lambda) \setminus \left(\bigcup_{e \in E} Z(\lambda e) \right) \right] \cap N_{\overline{D}^{I}}^{I} = \emptyset
\end{equation*}
however $x$ is contained in the set we intersect with $N_{\overline{D}^{I}}^{I}$, so $\left[ \Lambda^{*} \setminus N_{\overline{D}^{I}}^{I} \right] \cap \partial^{I}\Lambda$ is an open set in $\partial^{I}\Lambda$, which implies that $N_{\overline{D}^{I}}^{I}$ is a Borel set. Letting $\mathcal{M}$ be the set of equivalence classes in $\sim_{I}$ contained in $\overline{D}^{I} \setminus D$, then:
\begin{equation*}
N_{D}^{I}= N_{\overline{D}^{I}}^{I} \setminus \left( \bigcup_{C \in \mathcal{M}} N_{\overline{C}^{I}}^{I}  \right)
\end{equation*}
and hence $N_{D}^{I}$ is Borel. The sets $N_{D}^{I}$, where $D$ is an equivalence set in $\sim_{I}$, is a disjoint Borel partition of $\partial^{I}\Lambda$, also when $I= \emptyset$. It follows from Theorem \ref{t58} that $m_{D}$ is extremal in the set of quasi-invariant Borel probability measures with Radon-Nikodym cocycle $e^{-\beta c_{r}}$, and hence $m_{D}$ maps invariant Borel sets to $\{0,1\}$. Since $m_{D}(\partial^{I}\Lambda)=1$ there must therefore exist exactly one equivalence class $C$ in $\sim_{I}$ such that $m_{D}(N_{C}^{I})=1$. We know from Lemma \ref{l56} that $m_{D}(v\Lambda^{\infty_{I}, 0})>0$ if and only if $v \in \overline{D}^{I}$. However this must imply that for each $v \in \overline{D}^{I}$ we have $m_{D}(v\Lambda^{\infty_{I}, 0} \cap N_{C}^{I})>0$, which implies that $v \in \overline{C}^{I}$, so that in particular $D \subseteq \overline{C}^{I}$. Considering a $v\in D$, it follows since $m_{D}(v\Lambda^{\infty_{I}, 0} \cap N_{C}^{I})>0$ that there is an $\alpha \in v\Lambda C$ such that $m_{D}(\alpha \Lambda^{\infty_{I}, 0})>0$, and hence using Lemma \ref{l55} we get that $m_{D}(s(\alpha)\Lambda^{\infty_{I}, 0})>0$. This implies that $s(\alpha) \in \overline{D}^{I}$, so we also get $C \supseteq \overline{D}^{I}$, and hence $C=D$.
\end{proof}

To describe the non gauge-invariant KMS states, fix a $\beta \in \mathbb{R}$ and $r\in \mathbb{R}^{k}$, and let $C$ be a $(I, \beta, r)$-subharmonic component for some $I\subseteq \{1, \dots , k\}$. When $I\neq \emptyset$ we define the Periodicity group $\text{Per}_{I}(C)$ as:
\begin{equation*}
\{  (m,0)-(n,0) \ : \ m,n \in \mathbb{N}^{I} \ , \  \sigma^{(m,0)}(x)=\sigma^{(n,0)}(x) \text{ for all } x\in C\Lambda^{\infty_{I}, 0}\cap N_{C}^{I} \}
\end{equation*}
Since $C$ is a component in the $|I|$-graph $\Lambda_{I}$, $C\Lambda_{I}C$ is a strongly connected $|I|$-graph without sources and sinks, and hence it has an infinite path space $(C\Lambda_{I}C)^{\infty}$ consisting of functors from $\Omega_{|I|}$ to $C\Lambda_{I}C$. By identifying $\Omega_{|I|}$ with $\Omega_{k, (\infty_{I}, 0)}$ we get a homeomorphism from $(C\Lambda_{I}C)^{\infty}$ to $C\Lambda^{\infty_{I}, 0}\cap N_{C}^{I}$ that sends the shift map of degree $n\in \mathbb{N}^{I}$ on $(C\Lambda_{I}C)^{\infty}$ to the shift $\sigma^{(n,0)}$ on $C\Lambda^{\infty_{I}, 0}\cap N_{C}^{I}$, and that sends a cylinder set in $(C\Lambda_{I}C)^{\infty}$ given by $\lambda \in C\Lambda_{I}C$ to the relatively open set $Z(\lambda) \cap \left( C\Lambda^{\infty_{I}, 0}\cap N_{C}^{I} \right)$. It follows from this that our periodicity group $\text{Per}_{I}(C)$ is isomorphic to the periodicity group $\text{Per}(C\Lambda_{I}C)$ for the $I$-graph $C\Lambda_{I}C$ introduced in Section 5 in \cite{aHLRS}, and so by Proposition 5.2 in \cite{aHLRS} it is in fact a group. When $I=\emptyset$, we let $\text{Per}_{I}(C)=\{0\}$ with $0\in \mathbb{Z}^{k}$. Using the continuous map $\Phi :\mathcal{G}_{\Lambda} \to \mathbb{Z}^{k}$ defined in section \ref{background} we can now describe the non gauge-invariant KMS states.

\begin{thm}\label{t64}
Let $\Lambda$ be a finite $k$-graph and fix  $r \in \mathbb{R}^{k}$ and $\beta \in \mathbb{R}\setminus\{0\}$. There is a bijection between pairs $(C , \xi)$, where $C \in \mathcal{C}_{r}^{I}(\beta)$ for some $I\subseteq \{1, \dots , k\}$ and $\xi$ lies in the dual $\widehat{\text{Per}_{I}(C)}$ of $\text{Per}_{I}(C)$, to the set of extremal $\beta$-KMS states for $\alpha^{r}$ on $\mathcal{T}C^{*}(\Lambda)$:
\begin{equation*}
(C, \xi) \to \omega_{C, \xi}
\end{equation*}
where:
\begin{equation*}
\omega_{C, \xi}(f) = \int_{\Lambda^{*}} \sum_{g \in \mathcal{G}_{x}^{x}} f(g)\xi(\Phi(g)) \ d m_{C}(x) \quad \text{ for all } f\in C_{c}(\mathcal{G}).
\end{equation*} 
\end{thm}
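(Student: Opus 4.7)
The plan is to apply Theorem 5.2 of \cite{C} to the étale groupoid $\mathcal{G}_{\Lambda}$ with the one-cocycle $c_{r}$. That theorem classifies the extremal $\beta$-KMS states on a groupoid $C^{*}$-algebra by pairs $(m, \xi)$, where $m$ is an extremal quasi-invariant Borel probability measure on the unit space with Radon-Nikodym cocycle $e^{-\beta c_{r}}$, and $\xi$ is a character of a group built from the isotropy of the groupoid at $m$-generic points; the resulting state is precisely the integral formula displayed in Theorem \ref{t64}. Hence once the two parameters have been identified with $C$ and with an element of $\widehat{\text{Per}_{I}(C)}$, the form of $\omega_{C, \xi}$ comes for free.

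First I would identify the possible measures. By Theorem \ref{t58} combined with Theorem 1.3 of \cite{N}, the extremal quasi-invariant Borel probability measures with Radon-Nikodym cocycle $e^{-\beta c_{r}}$ correspond bijectively to the extremal gauge-invariant $\beta$-KMS states, and therefore to the elements of $\mathcal{C}_{r}(\beta)$; the measure attached to $C$ is $m_{C}$. This pins down the $C$-coordinate of the parametrization in Theorem 5.2 of \cite{C}.

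Second I would match the group in Theorem 5.2 of \cite{C} with $\text{Per}_{I}(C)$ for a component $C \in \mathcal{C}_{r}^{I}(\beta)$. By the preceding lemma $m_{C}(N_{C}^{I}) = 1$, and by Lemmas \ref{l55} and \ref{l56} the measure $m_{C}$ is concentrated on paths $x$ whose shift $\sigma^{(0, d(x)_{J})}(x)$ is an element of $(C\Lambda_{I}C)^{\infty}$. Under the homeomorphism between $C\Lambda^{\infty_{I}, 0} \cap N_{C}^{I}$ and $(C\Lambda_{I}C)^{\infty}$ described just before the theorem statement, the isotropy $\mathcal{G}_{x}^{x}$ for such a generic $x$ transfers to the isotropy of the infinite path groupoid of $C\Lambda_{I}C$. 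Crucially, since $d(x)_{J}$ is finite, any $(x, n, x) \in \mathcal{G}_{\Lambda}$ must have $n_{J} = 0$, so $\Phi(\mathcal{G}_{x}^{x})$ receives no $J$-contribution. Proposition 5.2 of \cite{aHLRS}, applied to the strongly connected $|I|$-graph $C\Lambda_{I}C$, then identifies this common isotropy image with $\text{Per}(C\Lambda_{I}C)$, which is $\text{Per}_{I}(C)$ by construction.

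The main obstacle will lie in this second step: showing that for $m_{C}$-almost every $x$ the group $\Phi(\mathcal{G}_{x}^{x})$ is constant and equal to $\text{Per}_{I}(C)$, rather than being larger due to accidental finite-time coincidences for $x$ near the boundary of $C$, or smaller due to degenerate choices of $x$ where the shift has not yet entered $C$. The hypothesis $\beta \neq 0$ is what allows the argument from \cite{C} to detect the periodicity group through the Radon-Nikodym cocycle and thus gives a true bijection. Once this isotropy identification is in place, bijectivity of $(C, \xi) \mapsto \omega_{C, \xi}$, extremality of each $\omega_{C, \xi}$, and the integral formula all transport directly from Theorem 5.2 of \cite{C} through the two identifications above.
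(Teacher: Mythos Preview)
Your overall architecture is correct and matches the paper's: reduce to Theorem~5.2 of \cite{C}, identify the extremal quasi-invariant measures with the $m_{C}$ via Theorem~\ref{t58}, and then show that the isotropy subgroup $A$ guaranteed by \cite{C} coincides with $\text{Per}_{I}(C)$. You also correctly isolate the non-trivial point, namely that $\Phi(\mathcal{G}_{x}^{x})=\text{Per}_{I}(C)$ for $m_{C}$-almost every $x$, and your observation that $d(x)_{J}<\infty$ forces the $J$-coordinates of any isotropy element to vanish is exactly how the paper handles that direction.

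The gap is in the step you flag as the ``main obstacle'' but then dispatch with a single citation. Proposition~5.2 of \cite{aHLRS} only tells you that $\text{Per}(C\Lambda_{I}C)$ is a group; it does \emph{not} assert that the isotropy of a generic infinite path equals this group. The inclusion $\text{Per}_{I}(C)\subseteq A$ is indeed easy (any global periodicity is visible at a generic $x$), but the reverse inclusion requires real work: one must rule out that a positive-$m_{C}$-measure set of paths exhibits an ``accidental'' periodicity $a\notin\text{Per}_{I}(C)$. The paper does this by invoking Corollary~4.2(b) together with Propositions~8.1 and~8.2 of \cite{aHLRS}, which produce a Borel probability measure $M$ on $(C\Lambda_{I}C)^{\infty}$ that gives the set $\{x:\sigma^{(n_{1},0)}(x)=\sigma^{(n_{2},0)}(x)\}$ measure zero whenever $(n_{1}-n_{2},0)\notin\text{Per}_{I}(C)$. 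One then compares $M$ and $m_{C}$ on cylinder sets (they agree up to a fixed scalar on $C\Lambda_{I}C$-cylinders) to transfer this null-set statement to $m_{C}$, contradicting $m_{C}(X(A))=1$. Without this measure-comparison argument, nothing in your outline excludes $A\supsetneq\text{Per}_{I}(C)$.

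A minor point: your explanation of the role of $\beta\neq 0$ is off. The restriction is not used in the computation of $A$; the paper notes separately (in a remark) that the $\beta=0$ case is recovered as the $1$-KMS states for $\alpha^{0}$.
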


\begin{remark}
The observation made after Definition \ref{d54} is also true here; the notations $\omega_{C, \xi}$ and $m_{C}$ are only well defined because we have fixed $\beta$ and $r$.
\end{remark}

\begin{remark}
Theorem \ref{t64} gives also a complete description of the $0$-KMS states for $\alpha^{r}$. The $0$-KMS states are the tracial states on $\mathcal{T}C^{*}(\Lambda)$, but choosing $0\in \mathbb{R}^{k}$ this is the same as the $1$-KMS states for $\alpha^{0}$. 
\end{remark}

\begin{proof}[Proof of Theorem \ref{t64}]
Let $C$ be $(I, \beta ,r)$-subharmonic. Let $A$ denote the subgroup of $\mathbb{Z}^{k}$ ensured by Theorem 5.2 in \cite{C} that satisfies:
\begin{equation*}
m_{C}(\{ x\in \Lambda^{*} \ : \ \Phi(\mathcal{G}_{x}^{x})=A  \})=1
\end{equation*}
and denote this Borel set by  $X(A)$. Using Lemma 4.1 and Theorem 5.2 in \cite{C} it is enough to prove that $A=\text{Per}_{I}(C)$ to prove the Theorem. Since $m_{C}(X(A) \cap N_{C}^{I})=1$, we can pick a $ x \in X(A) \cap N_{C}^{I}$, then setting $J=\{1, \dots , k\} \setminus I$ there is a $m \in \mathbb{N}^{k}$ such that $(0,d(x)_{J}) \leq m \leq d(x)$ and  $r(\sigma^{l}(x)) \in C$ for all $m \leq l \leq d(x)$, i.e. $\sigma^{m}(x)\in C\Lambda^{\infty_{I}, 0}\cap N_{C}^{I}$. For $l \in \text{Per}_{I}(C)$ we can write $l=(s,0)-(p,0)$ for some $s,p\in \mathbb{N}^{I}$ such that $\sigma^{(s,0)}(\sigma^{m}(x))=\sigma^{(p,0)}(\sigma^{m}(x))$, so $l\in \Phi(\mathcal{G}^{x}_{x})$. Since $x\in X(A)$ this implies that $\text{Per}_{I}(C) \subseteq A$.

For the other inclusion, let $x\in X(A)\cap N_{C}^{I}$ and assume that there is a $a \in A$ with $a_{j} > 0$ for a $j \in J$. Since $a\in A$ there exists $n,m$ such that $a=n-m$, $n_{j}>m_{j}$ and $\sigma^{n}(x)=\sigma^{m}(x)$. This implies that $d(\sigma^{n}(x))=d(\sigma^{m}(x))$, so $d(x)_{j}=\infty$. Since $j \in J$ and $x \in N^{I}_{C} \subseteq \partial^{I}\Lambda$ this is a contradiction. So $a_{j}=0$ for $j \in J$, which in particular proves the Theorem when $I = \emptyset$. So assume $I \neq \emptyset$ and that there exists $a\in A\setminus \text{Per}_{I}(C)$. Now fix $v \in C$, then:
\begin{equation*}
v\Lambda^{\infty_{I}, 0} \cap X(A)
\subseteq \bigcup_{n,m \in \mathbb{N}^{k}, n-m=a} \{ x\in v\Lambda^{\infty_{I}, 0} \ : \ \sigma^{n}(x)=\sigma^{m}(x)\}
\end{equation*}
so since $m_{C}(v\Lambda^{\infty_{I} , 0}\cap X(A))>0$ we can find a $n_{1}, n_{2} \in \mathbb{N}^{I}$ with $(n_{1}-n_{2},0)=a$ and 
\begin{equation} \label{erty}
m_{C}(\{ x\in v\Lambda^{\infty_{I}, 0} \ : \ \sigma^{(n_{1},0)}(x)=\sigma^{(n_{2},0)}(x)\})>0 .
\end{equation}
For the vector $y^{C}$ corresponding to $m_{C}$ we have $A_{i}y^{C}=e^{\beta r_{i}}y^{C}=\rho(A_{i}^{C})y^{C}$ for $i\in I$, which implies that 
\begin{equation*}
A_{i}^{C}y^{C}|_{C}=e^{\beta r_{i}}y^{C}|_{C}=\rho(A_{i}^{C})y^{C}|_{C}
\end{equation*}
Since $(A_{i}^{C})_{i\in I}$ are the vertex matrices for $C\Lambda_{I}C$, it follows from $(b)$ in Corollary 4.2, Proposition 8.1 and Proposition 8.2 in \cite{aHLRS} that there is a Borel probability measure $M$ on $(C\Lambda_{I}C)^{\infty}\simeq C\Lambda^{\infty_{I}, 0}\cap N_{C}^{I}$, with 
\begin{equation*}
M(\{x\in C\Lambda^{\infty_{I}, 0}\cap N_{C}^{I} \ : \ \sigma^{(n_{1},0)}(x)= \sigma^{(n_{2},0)}(x)  \})=0
\end{equation*}
and that for each $\lambda\in C\Lambda_{I} C$ satisfies:
\begin{equation*}
M(\ C\Lambda^{\infty_{I}, 0}\cap N_{C}^{I} \cap Z(\lambda) \} =e^{-\beta r\cdot d(\lambda)} y^{C}_{s(\lambda)} \lVert y^{C}|_{C}\rVert_{1}^{-1} .
\end{equation*}
Let $\varepsilon >0$. By compactness there are paths $\delta_{1}, \dots , \delta_{q} \in C\Lambda_{I}C$ of the same degree such that:
\begin{equation*}
\{x\in C\Lambda^{\infty_{I}, 0}\cap N_{C}^{I} \ : \ \sigma^{(n_{1},0)}(x)= \sigma^{(n_{2},0)}(x)  \}
 \subseteq \bigsqcup_{i=1}^{q} Z(\delta_{i})\cap C\Lambda^{\infty_{I}, 0}\cap N_{C}^{I}
\end{equation*}
and such that $\sum_{i=1}^{q}M(Z(\delta_{i})\cap C\Lambda^{\infty_{I}, 0}\cap N_{C}^{I})< \varepsilon$. Since $\varepsilon$ was arbitrary the calculation:
\begin{align*}
\sum_{i=1}^{n} m_{C}(Z(\delta_{i}))
&=  \sum_{i=1}^{n} e^{- \beta r \cdot d(\delta_{i})}m_{C}(Z(s(\delta_{i})))
=  \sum_{i=1}^{n} e^{- \beta r \cdot d(\delta_{i})}y_{s(\delta_{i})}^{C}\\
&=\lVert y^{C}|_{C}\rVert_{1}  \sum_{i=1}^{n}M(Z(\delta_{i})\cap C\Lambda^{\infty_{I}, 0}\cap N_{C}^{I})< \varepsilon
\end{align*}
implies that:
\begin{equation*}
m_{C}(\{x\in C\Lambda^{\infty_{I}, 0}\cap N_{C}^{I} \ : \ \sigma^{(n_{1},0)}(x)= \sigma^{(n_{2},0)}(x)  \}) =0 .
\end{equation*}
Combining this with \eqref{erty} gives us a contradiction since any $x\in v\Lambda^{\infty_{I},0}$ with $x\notin N_{C}^{I}$ satisfies $x\in Z(\alpha)$ for some $\alpha$ with $m_{C}(Z(\alpha))=0$. In conclusion $A=\text{Per}_{I}(C)$.
\end{proof}

We will now conclude from Theorem \ref{t64} which KMS states factors through a KMS state of the Cuntz-Krieger algebra $C^{*}(\Lambda)$. To introduce the Cuntz-Krieger algebra we follow \cite{Cynthia, yeend}. Let $\Lambda$ be a finite $k$-graph. For $v\in \Lambda^{0}$ a subset $E \subseteq v\Lambda$ is called a \emph{finite exhaustive set} if it is finite and for every $\lambda \in v\Lambda$ there exists $\mu \in E$ with $\Lambda^{\text{min}}(\lambda, \mu)\neq \emptyset$. For each $v\in \Lambda^{0}$ we let $v\mathcal{F}\mathcal{E}(\Lambda)$ denote the set of finite exhaustive sets $E$ with $E \subseteq v\Lambda$, and we call a path $x\in \Lambda^{*}$ a \emph{boundary path} if for all $n\in \mathbb{N}^{k}$ with $n\leq d(x)$ and all $E\in r(\sigma^{n}(x)) \mathcal{F}\mathcal{E}(\Lambda)$ there exists a $\lambda \in E$ with $x(n,n+d(\lambda))=\lambda$. By Proposition 6.10 in \cite{Cynthia}, the set $\mathcal{B}$ of boundary paths in $\Lambda^{*}$ is a non-empty invariant closed set, so the reduction $\mathcal{G}_{\Lambda}|_{\mathcal{B}}$ of $\mathcal{G}_{\Lambda}$ to $\mathcal{B}$ is a locally compact second countable Hausdorff \'etale groupoid, and its full groupoid $C^{*}$-algebra $C^{*}(\mathcal{G}_{\Lambda}|_{\mathcal{B}})$ is the Cuntz-Krieger algebra of $C^{*}(\Lambda)$. To conclude which KMS states factor through a state of $C^{*}(\Lambda)$ we need the following:

\begin{lemma} \label{lslut}
Let $\Lambda$ be a finite $k$-graph and $I\sqcup J=\{1, \dots , k\}$ be a partition. A path $x\in \partial^{I}\Lambda$ is a boundary path if and only if $x\in N_{\overline{C}^{I}}^{I}$ for a component $C$ in $\sim_{I}$ satisfying:
\begin{equation} \label{eslut}
C\Lambda^{e_{j}} = \emptyset \qquad \forall j\in J
\end{equation}
where $N_{\overline{C}^{I}}^{I}$ is defined in \eqref{ejohan}.
\end{lemma}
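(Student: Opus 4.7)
The plan hinges on a structural observation: for $x\in\partial^{I}\Lambda$ with $d(x)=(\infty_{I},m)$, the shift $\sigma^{(0,m)}(x)\in\Lambda^{\infty_{I},0}$ is an infinite $I$-path in the finite subgraph $\Lambda_{I}$, and by a standard pigeonhole-type argument its vertex sequence eventually stabilizes in a unique $\sim_{I}$-component $C_{x}$. Hence $x\in N_{C_{x}}^{I}\subseteq N_{\overline{C_{x}}^{I}}^{I}$, so the existence of \emph{some} component $C$ with $x\in N_{\overline{C}^{I}}^{I}$ is automatic; the substance of the lemma is the equivalence with the condition $C\Lambda^{e_{j}}=\emptyset$ for $j\in J$. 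A useful preliminary is that the property ``$v\Lambda^{e_{j}}\neq\emptyset$'' is constant on $\sim_{I}$-classes: given $v\sim_{I}w$ via a path $\mu\in v\Lambda_{I}w$ and an edge $e\in w\Lambda^{e_{j}}$, the composite $\mu e$ lies in $v\Lambda^{d(\mu)+e_{j}}$ and the $k$-graph factorization property (for degrees with disjoint supports) extracts a degree-$e_{j}$ initial segment at $v$. Thus $C\Lambda^{e_{j}}=\emptyset$ is equivalent to ``every $v\in C$ has $v\Lambda^{e_{j}}=\emptyset$''.

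For the $(\Leftarrow)$ direction, take $n\leq d(x)$ and a finite exhaustive set $E$ at $r(\sigma^{n}(x))$. Pick $q=(q_{I},m-n_{J})$ with $q_{I}\geq d(\eta)_{I}$ for every $\eta\in E$ and with $s(\mu)\in\overline{C}^{I}$, where $\mu:=x(n,n+q)$. Exhaustiveness provides $\eta\in E$ with $\Lambda^{\min}(\mu,\eta)\neq\emptyset$, giving $(\kappa,\gamma)$ with $\mu\kappa=\eta\gamma$ and $d(\mu\kappa)=d(\mu)\vee d(\eta)$. If $d(\eta)\leq q$, then $\kappa$ is a vertex and $\eta$ is a prefix of $\mu$, giving the boundary condition. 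Otherwise the choice of $q_{I}$ forces the violation at some $j\in J$, so $d(\kappa)_{j}>0$, and factoring $\kappa$ produces an edge in $s(\mu)\Lambda^{e_{j}}$. Combining this edge with an $I$-path from $s(\mu)\in\overline{C}^{I}$ to some $w\in C$ via the disjoint-support factorization produces an element of $w\Lambda^{e_{j}}\subseteq C\Lambda^{e_{j}}$, contradicting the hypothesis.

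For the $(\Rightarrow)$ direction, take $C=C_{x}$ so that $x\in N_{C}^{I}\subseteq N_{\overline{C}^{I}}^{I}$ holds automatically, and suppose toward contradiction $C\Lambda^{e_{j}}\neq\emptyset$ for some $j\in J$. By the preliminary fact every $v\in C$ has outgoing $e_{j}$-edges. Fix $p=(p_{I},m)$ with $v=x(p,p)\in C$. Using the strong $\sim_{I}$-connectedness of $C$ and the saturated supply of $e_{j}$-edges at vertices of $C$, construct a finite exhaustive set at $v$ in which every element has $d(\lambda)_{j}>0$; since $p_{j}=m=d(x)_{j}$, no such $\lambda$ can equal $x(p,p+d(\lambda))$, contradicting the boundary path condition at $p$.

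The main obstacle is making the propagation step in the $(\Leftarrow)$ direction fully rigorous: pulling an $e_{j}$-edge at $s(\mu)\in\overline{C}^{I}$ back to an $e_{j}$-edge with range in $C$ itself. The key tool is the $k$-graph factorization property applied to compositions of $I$-paths with $e_{j}$-edges, where the disjoint-support nature of the degrees guarantees that the two factorizations of the composite path coexist; one needs to check that the choice of $q_{I}$ (and the path $x$ itself, which travels coherently from $s(\mu)$ toward $C$) provides a concrete enough $I$-path to execute the factorization. A subsidiary but related delicacy in $(\Rightarrow)$ is the explicit combinatorial construction of a finite exhaustive set at $v\in C$ all of whose elements have positive $j$-component.
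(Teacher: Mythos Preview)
Your plan has a genuine gap in each direction, and in fact the very steps you flag as ``obstacles'' cannot be carried out as stated.

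\textbf{The $(\Leftarrow)$ direction.} Your propagation step is in the wrong direction. Your preliminary fact (correctly) says: if $v\leq_{I}w$ and $w\Lambda^{e_{j}}\neq\emptyset$, then $v\Lambda^{e_{j}}\neq\emptyset$. But in your argument you have an edge in $s(\mu)\Lambda^{e_{j}}$ with $s(\mu)\in\overline{C}^{I}$, i.e.\ $s(\mu)\leq_{I}w$ for some $w\in C$, and you want to conclude $w\Lambda^{e_{j}}\neq\emptyset$. Having an $I$-path $\alpha\in s(\mu)\Lambda_{I}w$ and an edge $e\in s(\mu)\Lambda^{e_{j}}$ gives two paths with the same \emph{range}, not composable paths; to extract something in $w\Lambda^{e_{j}}$ you would need $\Lambda^{\min}(\alpha,e)\neq\emptyset$, which is not guaranteed in a general $k$-graph. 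The paper avoids this entirely: instead of stopping the segment of $x$ at a vertex of $\overline{C}^{I}$, it appends an additional $I$-path $\mu$ from that vertex into $C$ itself, and then applies exhaustiveness to the extended path $x(n,n+m)\mu$. Since the source now lies in $C$, any common extension forcing a $J$-edge directly contradicts $C\Lambda^{e_{j}}=\emptyset$.

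\textbf{The $(\Rightarrow)$ direction.} Taking $C=C_{x}$ is too restrictive: the lemma only asserts the existence of \emph{some} $C$ with $x\in N_{\overline{C}^{I}}^{I}$, and it can happen that $C_{x}$ fails the condition while a component $C$ with $C_{x}\subseteq\overline{C}^{I}$ satisfies it. (Concretely: take $k=2$, $I=\{1\}$, vertices $v,w,u$, with $e_{1}$-loops at each vertex, an $e_{1}$-edge $v\to w$, and a single $e_{2}$-edge $v\to u$; the matrices commute. The infinite $e_{1}$-loop at $v$ is a boundary path via $C=\{w\}$, yet $C_{x}=\{v\}$ has $\{v\}\Lambda^{e_{2}}\neq\emptyset$.) Correspondingly, fixing a single $j$ and trying to build a finite exhaustive set at $v\in C_{x}$ with all elements of positive $j$-degree will fail: a path $\lambda\in v\Lambda$ with $d(\lambda)_{j}=0$ and $s(\lambda)\Lambda^{e_{j}}=\emptyset$ cannot be matched. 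The paper instead assumes for contradiction that \emph{every} component $C$ with $D\subseteq\overline{C}^{I}$ fails the condition, and uses the set $E=\bigcup_{j\in J}v\Lambda^{e_{j}}$; for $\lambda\in v\Lambda_{I}$ one lands in some component $C$ with $D\subseteq\overline{C}^{I}$, and the full hypothesis then supplies the needed $e_{j}$-edge (for a $j$ depending on $\lambda$).
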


\begin{proof}
Assume $x\in \partial^{I}\Lambda$ is a boundary path. Since $x\in N_{\overline{C}^{I}}^{I}$ is equivalent to $\sigma^{n}(x)\in N_{\overline{C}^{I}}^{I}$ for any $n\in \mathbb{N}^{k}$ with $n\leq d(x)$, we can assume by Lemma 5.13 in \cite{Cynthia} that $x\in \Lambda^{\infty_{I}, 0}$ and that there exists a component $D$ in $\sim_{I}$ with $r(\sigma^{n}(x))\in D$ for all $n\leq d(x)$. Set $v=r(x)$ and assume for contradiction that $x\notin N_{\overline{C}^{I}}^{I}$ for any $C$ satisfying \eqref{eslut} and set:
\begin{equation*}
E := \bigcup_{j\in J} v\Lambda^{e_{j}}  .
\end{equation*}
For $\lambda \in v\Lambda$ with $d(\lambda)_{j} >0$ for some $j\in J$ it follows from the factorisation property that there exists a $e\in E$ with $\Lambda^{\text{min}}(e,\lambda)\neq \emptyset$. If $d(\lambda)\in \mathbb{N}^{I}$ then there is some component $C$ in $\sim_{I}$ with $s(\lambda)\in C$. This implies that $D\subseteq \overline{C}^{I}$, so by assumption $C\Lambda^{e_{j}}\neq \emptyset$ for some $j\in J$. Since $C$ is a component there is a $\mu\in s(\lambda)\Lambda$ with $d(\mu)_{j}>0$, and using the factorisation property on $\mu$ this implies $s(\lambda)\Lambda^{e_{j}} \neq \emptyset$. Composing $\lambda$ with an element from this set, and using the factorisation property, gives a $e\in E$ with $\Lambda^{\text{min}}(e, \lambda)\neq \emptyset$. In conclusion $E\in v\mathcal{F}\mathcal{E}(\Lambda)$, contradicting that $x \in \Lambda^{\infty_{I}, 0}$ is a boundary path. This proves one implication.

Assume now that $x\in N_{\overline{C}^{I}}^{I}$ for a component $C$ in $\sim_{I}$ with $C\Lambda^{e_{j}}=\emptyset$ for all $j\in J$. We can again assume that $x\in \Lambda^{\infty_{I}, 0}$ and that there exists a component $D$ in $\sim_{I}$ with $r(\sigma^{n}(x))\in D$ for all $n\leq d(x)$. Since $x\in N_{\overline{C}^{I}}^{I}$, this implies that $D \subseteq \overline{C}^{I}$. Now let $n\in \mathbb{N}^{k}$ with $n\leq d(x)$ and $E\in r(\sigma^{n}(x))\mathcal{F}\mathcal{E}(\Lambda)$. Choose $m\in \mathbb{N}^{I}$ such that $m \geq d(\lambda)_{I}$ for all $\lambda \in E$, since $r(\sigma^{n+m}(x))\in D\subseteq \overline{C}^{I}$ there is a path $\mu \in r(\sigma^{n+m}(x))\Lambda_{I} C$. Now $x(n, n+m)\mu \in r(\sigma^{n}(x))\Lambda$ so there is a $e\in E$ with $\Lambda^{\text{min}}(x(n, n+m)\mu, e)\neq \emptyset$, but since $s(x(n, n+m)\mu)=s(\mu)\in C$ we must have $d(e)\in \mathbb{N}^{I}$, and by choice of $m$ we have $x(n, n+d(e))=e$, proving the other implication.
\end{proof}

\begin{remark}
When $J=\emptyset$ all components $C$ in $\sim$ satisfies \eqref{eslut}, so $\Lambda^{\infty} \subseteq \mathcal{B}$. When $I=\emptyset$ we see that $x\in \Lambda$ satisfies $x\in \mathcal{B}$ exactly when $s(x)$ is an \emph{absolute source}, i.e. $s(x)\Lambda=\{s(x)\}$.
\end{remark}

\begin{cor}[Corollary to Theorem \ref{t64}]
In the setting of Theorem \ref{t64} a state $\omega_{D, \xi}$ for a $D\in \mathcal{C}_{r}^{I}(\beta)$ and $\xi\in \widehat{\text{Per}_{I}(D)}$ factors through a state of $C^{*}(\Lambda)$ if and only if $D\subseteq \overline{C}^{I}$ for a component $C$ in $\sim_{I}$ satisfying:
\begin{equation} \label{eslut2}
C\Lambda^{e_{j}}=\emptyset \qquad \forall j\in \{1, \dots , k\} \setminus I
\end{equation}
\end{cor}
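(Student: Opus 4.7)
The plan is to reduce the statement to a purely measure-theoretic question about $m_D$ and then combine the characterisation of boundary paths in Lemma \ref{lslut} with the support properties of $m_D$ already extracted in Section \ref{nongauge}.

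First I would observe that $\mathcal{B}\subseteq \Lambda^{*}$ is closed and invariant, so its complement is open and invariant and determines an ideal $J_{\mathcal{B}}\triangleleft \mathcal{T}C^{*}(\Lambda)=C^{*}(\mathcal{G}_{\Lambda})$ whose quotient is $C^{*}(\mathcal{G}_{\Lambda}|_{\mathcal{B}})=C^{*}(\Lambda)$. A state of $\mathcal{T}C^{*}(\Lambda)$ factors through this quotient exactly when it annihilates $J_{\mathcal{B}}$, and by positivity (using an approximate unit in $C_{0}(\Lambda^{*}\setminus \mathcal{B})$) this happens precisely when the associated measure on the unit space gives full mass to $\mathcal{B}$. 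Next, when $f\in C(\Lambda^{*})$ the formula for $\omega_{D,\xi}$ in Theorem \ref{t64} collapses: $f$ is supported on units of $\mathcal{G}_{\Lambda}$, so $\sum_{g\in \mathcal{G}_{x}^{x}} f(g)\xi(\Phi(g))=f(x)\xi(0)=f(x)$, and therefore the measure associated to $\omega_{D,\xi}$ is $m_{D}$ independently of $\xi$. The corollary thus reduces to the claim that $m_{D}(\mathcal{B})=1$ if and only if $D\subseteq \overline{C}^{I}$ for some component $C$ in $\sim_{I}$ satisfying \eqref{eslut2}.

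To prove this equivalence I would combine $m_{D}(N_{D}^{I})=1$, established in the lemma preceding Theorem \ref{t64}, with the identity $\mathcal{B}\cap \partial^{I}\Lambda=\bigcup_{C} N_{\overline{C}^{I}}^{I}$ from Lemma \ref{lslut}, the union being taken over components $C$ in $\sim_{I}$ satisfying \eqref{eslut2}. If such a $C$ with $D\subseteq \overline{C}^{I}$ exists, then a path eventually in $D$ is eventually in $\overline{C}^{I}$, so $N_{D}^{I}\subseteq N_{\overline{C}^{I}}^{I}\subseteq \mathcal{B}$ and hence $m_{D}(\mathcal{B})\geq m_{D}(N_{D}^{I})=1$. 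For the converse I would argue contrapositively: any $x\in N_{D}^{I}\cap \mathcal{B}$ lies in $N_{\overline{C}^{I}}^{I}$ for some $C$ satisfying \eqref{eslut2} by Lemma \ref{lslut}, and since $x$ is eventually in both $D$ and $\overline{C}^{I}$ we get $D\cap \overline{C}^{I}\neq \emptyset$. Because $\overline{C}^{I}$ is saturated under $\sim_{I}$ directly from its definition, this forces $D\subseteq \overline{C}^{I}$. Hence if no such $C$ exists then $N_{D}^{I}\cap \mathcal{B}=\emptyset$ and $m_{D}(\mathcal{B})=0$, completing the equivalence.

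The only point requiring genuine care is the observation that the associated measure is independent of $\xi$, which makes the condition a property of $D$ alone; once this is in place the remainder is a bookkeeping exercise combining Lemma \ref{lslut}, Corollary \ref{c43} and the concentration of $m_{D}$ on $N_{D}^{I}$, and I do not anticipate a substantive obstacle.
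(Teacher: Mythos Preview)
Your proposal is correct and follows essentially the same route as the paper: reduce to whether $m_{D}(\mathcal{B})=1$, then combine $m_{D}(N_{D}^{I})=1$ with Lemma~\ref{lslut} to match the combinatorial condition on $D$. The only difference is cosmetic: where the paper invokes Lemma~2.10 of \cite{MRW} for the reduction step and uses the $\{0,1\}$-dichotomy for $m_{D}$ on invariant sets, you supply a direct ideal/approximate-unit argument and explicitly verify that the measure associated to $\omega_{D,\xi}$ is $m_{D}$ independently of $\xi$.
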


\begin{proof}
Since $\mathcal{B}$ is invariant we either have $m_{D}(\mathcal{B})=1$ or $m_{D}(\mathcal{B})=0$, and so Lemma 2.10 in \cite{MRW} implies that $\omega_{D, \xi}$ factors through a state of $C^{*}(\Lambda)$ precisely when $m_{D}(\mathcal{B})=1$. If $m_{D}(\mathcal{B})=1$ then $m_{D}(\mathcal{B}\cap N_{D}^{I})=1$ and there must be a $x\in \partial^{I}\Lambda\cap \mathcal{B}$ that eventually lies in $D$. Lemma \ref{lslut} now implies that $x$ also eventually lies in $\overline{C}^{I}$ for a $C$ satisfying \eqref{eslut2}, so we must have $D\subseteq \overline{C}^{I}$. If $m_{D}(\mathcal{B})=0$ and there exists a $C$ as in \eqref{eslut2} with $D\subseteq \overline{C}^{I}$ then $N_{D}^{I}\subseteq N_{\overline{C}^{I}}^{I}$, but this is not possible since $m_{D}(N_{D}^{I})=1$ and $m_{D}(N_{\overline{C}^{I}}^{I}) \leq m_{D}(\mathcal{B})=0$. This proves the corollary.
\end{proof}

\section{Examples and comparison with the literature} \label{ex}

\subsection{Examples}
To illustrate how to describe the KMS states for a given graph we will use our machinery on a few examples. The first graph we consider is from Example 9.1 in \cite{FaHR} where the KMS states for the action given by $r=(\ln(5), \ln(4))$ were calculated. We have included this example to illustrate that our results give the same KMS states as the ones in \cite{FaHR}, but also to show the strength of our approach when it comes to concrete calculations.

\begin{example}
Consider a $2$-graph given by the graph below, where normal edges have degree $e_{1}$ and dashed edges have degree $e_{2}$, and the number at each edge denote the number of edges:
\begin{center}
\tikzstyle{input} = [draw, circle, fill, inner sep=1.5pt]
\begin{tikzpicture}[->,>=stealth',shorten >=1pt,auto,node distance=2cm,
                    thick,main node/.style={circle,draw,font=\sffamily\small\bfseries} ,align=center]

  \node[input](1) [label=above left:$u$]{} ;
  \node[input] (2) [above right=5cm of 1, label=above left:$v$] {};
    \node[input] (3) [right=5cm of 1, label=below:$w$] {};

  \path[every node/.style={font=\sffamily}]
    (2)  edge [dashed, bend left, out=10, in=170] node {$1$} (1)
       (2)  edge [bend right, out=-25, in=205] node[above] {$2$} (1)
     (3)  edge [dashed, bend left, out=25, in=155] node {$2$} (1)      
           (3)  edge [bend right, out=-10, in=190] node[above] {$3$} (1)    
    (1)  edge [dashed, loop, min distance=26mm,in=170,out=100,looseness=10] node[left] {$2$} (1)
        (1)  edge [loop, min distance=26mm,in=-140,out=-70,looseness=10] node[above] {$2$} (1)
        (2)  edge [dashed, loop, min distance=26mm,in=0,out=70,looseness=10] node[right] {$3$} (2)
                (2)  edge [loop, min distance=26mm,in=-95,out=-25,looseness=10] node[right] {$4$} (2)
                (3)  edge [loop, min distance=26mm,in=20,out=-50,looseness=10] node[right] {$5$} (3)
                (3)  edge [dashed, loop, min distance=26mm,in=120,out=50,looseness=10] node[below] {$4$} (3);
\end{tikzpicture}
\end{center}
No matter which $I\subseteq \{1,2\}$ we choose there are three components $\{u\}$, $\{v\}$ and $\{w\}$ for $\sim_{I}$, so we will analyse for which $\beta$ and $r$ each is $(I, \beta ,r)$-subharmonic. For this, notice that $\overline{\{v\}}=\{u,v\}$, $\overline{\{u\}}=\{u\}$ and $\overline{\{w\}}=\{u,w\}$. Considering the graph it follows that:
\begin{align*}
&\rho(A^{\overline{\{v\}}}_{1})=\rho(A^{\{v\}}_{1})=4 \quad , \quad \rho(A^{\overline{\{v\}}}_{2})=\rho(A^{\{v\}}_{2})=3 \\
& \rho(A^{\overline{\{u\}}}_{1})=\rho(A^{\{u\}}_{1})=2 \quad , \quad \rho(A^{\overline{\{u\}}}_{2})=\rho(A^{\{u\}}_{2})=2 \\
& \rho(A^{\overline{\{w\}}}_{1})=\rho(A^{\{w\}}_{1})=5 \quad , \quad \rho(A^{\overline{\{w\}}}_{2})=\rho(A^{\{w\}}_{2})=4
\end{align*}
Hence by Definition 5.1 the different components give KMS states for $\beta r$ in the sets as indicated in the table below. 

\begin{center}
  \begin{tabular}{  l | c | c | c |}
    
    I$\diagdown$C & $\{v\}$ & $\{u\}$ & $\{w\}$ \\ 
    \hline 
    $\emptyset$ & $]\ln(4), \infty[\times ]\ln(3), \infty[$ & $]\ln(2), \infty[\times ]\ln(2), \infty[$ & $]\ln(5), \infty[\times ]\ln(4), \infty[$ \\ \hline
    $\{1\}$ & $\{\ln(4)\} \times ]\ln(3), \infty[$ & $\{\ln(2)\}\times ]\ln(2), \infty[$ & $\{\ln(5)\}\times ]\ln(4), \infty[$  \\ \hline
    $\{2\}$ & $]\ln(4), \infty[\times \{\ln(3)\}$ & $]\ln(2), \infty[\times \{\ln(2)\}$ & $]\ln(5), \infty[\times \{\ln(4)\}$ \\ \hline
     $\{1,2\}$ & $\{\ln(4)\}\times \{\ln(3)\}$ & $\{\ln(2)\}\times \{\ln(2)\}$ & $\{\ln(5)\}\times \{\ln(4)\}$ \\
    \hline
  \end{tabular}
\end{center}
As in \cite{FaHR} we now consider the action given by $r=(\ln(5), \ln(4))$ which has rationally independent coordinates. Theorem \ref{t58} then implies that we get a complete description of the $\beta$-KMS states for $\alpha^{r}$ by describing the $(I, \beta, r)$-subharmonic components for different $I\subseteq \{1, 2\}$. So we go through each entry of the table and consider for which value of $\beta$ that $\beta r$ lies in the set at that entry. This gives the following result (notice $\ln(2)/\ln(4)=1/2$):
\begin{center}
  \begin{tabular}{  l | c | c | c |}
    
    I$\diagdown$C & $\{v\}$ & $\{u\}$ & $\{w\}$ \\ 
    \hline 
    $\emptyset$ & $]\ln(4)/\ln(5), \infty[$ & $]1/2, \infty[$ & $]1, \infty[$ \\ \hline
    $\{1\}$ & $\{\ln(4)/\ln(5)\}$ & $\emptyset$ & $\emptyset$  \\ \hline
    $\{2\}$ & $\emptyset$ & $\{1/2\}$ & $\emptyset$ \\ \hline
     $\{1,2\}$ & $\emptyset$ & $\emptyset$ & $\{1\}$ \\
    \hline
  \end{tabular}
\end{center}
This is exactly the same as obtained in Example 9.1 in \cite{FaHR}.
\end{example}

\begin{example}
Using Theorem \ref{t58} we will give an example of a strongly connected graph without sources and sinks and a one-parameter group $\alpha^{r}$ with two different gauge-invariant $\beta$-KMS states for $\alpha^{r}$ for the critical temperature $\beta$. To do this consider the following skeleton:
\begin{center}
\tikzstyle{input} = [draw, circle, fill, inner sep=1.5pt]
\begin{tikzpicture}[->,>=stealth',shorten >=1pt,auto,node distance=2cm,
                    thick,main node/.style={circle,draw,font=\sffamily\small\bfseries} ,align=center, scale=0.7]

  \node[input](1) [label=above:$v$]{} ;
  \node[input] (2) [right=4cm of 1, label=above:$w$] {};

  \path[every node/.style={font=\sffamily\large}]
    (2)  edge [dashed,bend left, out=25, in=155] node {$p$} (1)
    (1)  edge [dashed, bend right, out=25, in=155]node {$q$} (2)
    (1)  edge [loop, min distance=36mm,in=225,out=135,looseness=10] node[left] {$l$} (1)
    (2)  edge [ loop, min distance=36mm,out=45, in=-45, looseness=10 ] node {$l$} (2);
\end{tikzpicture}
\end{center}
The full edges have degree $e_{1}$ and the dashed edges have degree $e_{2}$, and the numbers $l,p,q \geq 1$ denote the number of edges . Since $A_{1}=l 1_{\{v,w\}}$ then $A_{1}$ and $A_{2}$ must commute, and hence there exists a $2$-graph with this skeleton, c.f. Section 6 in \cite{KP}. In the equivalence relation $\sim_{\{1\}}$ both $\{v\}$ and $\{w\}$ are components, and choosing $r=(\ln(l),\ln(2\sqrt{p\cdot q}))$ we see that $e^{1 r_{1} }=\rho(A^{\{v\}}_{1})=\rho(A^{\{w\}}_{1})$. Since $\overline{\{v\}}=\overline{\{w\}}=\{v,w\}$, and since $\rho(A_{2})=\sqrt{p\cdot q}$, both $\{v\}$ and $\{w\}$ are $(\{1\}, 1,r)$-subharmonic components, and since there are no $(\emptyset, 1,r)$- and $(\{1,2\}, 1,r)$-subharmonic components, Theorem \ref{t58} implies that they give rise to the only extremal gauge-invariant $1$ -KMS states for $\alpha^{r}$. Ordering the set of vertexes by $\{v,w\}$ then the vectors given in Proposition \ref{p53} are:
\begin{equation*}
\tilde{x}^{\{v\}} = \frac{2}{3}
\begin{pmatrix}
&2 \\
&\sqrt{q}/ \sqrt{p}
\end{pmatrix}
\quad , \quad 
\tilde{x}^{\{w\}} = \frac{2}{3}
\begin{pmatrix}
&\sqrt{p}/ \sqrt{q} \\
&2
\end{pmatrix}.
\end{equation*}
Both vectors are, as predicted, sub-invariant for the family $\{l^{-1} A_{1}, (2 \sqrt{p\cdot q})^{-1} A_{2}\}$. Hence their normalizations $y^{\{v\}}$ and $y^{\{w\}}$ give rise to two different gauge-invariant $1$-KMS states for $\alpha^{r}$. When $l>1$ then $\text{Per}_{\{1\}}(\{v\})=\text{Per}_{\{1\}}(\{w\})=\{0\}$, and the $1$-KMS states for $\alpha^{r}$ are given by convex combinations of the two states $\omega_{y^{\{v\}}}$ and $\omega_{y^{\{w\}}}$, with:
\begin{equation*}
\omega_{y^{\{v\}}}(S_{\lambda}S_{\mu}^{*})=\delta_{\lambda, \mu} e^{-\beta r\cdot d(\lambda)}y^{\{v\}} \quad , \quad  \omega_{y^{\{w\}}}(S_{\lambda}S_{\mu}^{*})=\delta_{\lambda, \mu} e^{-\beta r\cdot d(\lambda)}y^{\{w\}} .
\end{equation*}
If $l=1$ then $\text{Per}_{\{1\}}(\{v\})=\text{Per}_{\{1\}}(\{w\})=\mathbb{Z} \times \{0\}$, so letting $m_{v}$  and $m_{w}$ be the measures corresponding to respectively $\{v\}$ and $\{w\}$, then $\Phi(\mathcal{G}_{x}^{x})=\mathbb{Z} \times \{0\}$ for almost all $x\in \Lambda^{*}$ and the extremal $1$-KMS states for $\alpha^{r}$ are:
\begin{equation*}
\omega_{\{u\}, \lambda}(f) = \int_{\Lambda^{*}} \sum_{(x,(n,m), x)\in \mathcal{G}_{x}^{x}} f(x,(n,m), x) \lambda^{n} \ dm_{u}(x) 
\end{equation*}
for all $\lambda \in \mathbb{T}$ and $u=v,w$.
\end{example}

\subsection{Comparison with the literature}
We will now compare our results to the ones in \cite{FaHR}. To do this we will need the following Lemma:

\begin{lemma}\label{lap}
Let $\Lambda$ be a finite $k$-graph without sources with the property, that when $v,w\in \Lambda^{0}$ satisfies $v\Lambda^{m} w\neq \emptyset$ for some $m\in \mathbb{N}^{k}\setminus \{0\}$, then they also satisfy $v\Lambda^{m} w\neq \emptyset$ for some $m\in \mathbb{N}^{\{i\}}\setminus \{0\}$ for each $i\in \{1, \dots , k\}$. Assume $C$ is a component in $\sim$ with $\rho(A_{j}^{C})>0$ for each $j\in \{1, \dots , k\}$, then:
\begin{enumerate}
\item \label{l731}If $\rho(A^{D})\lneq \rho(A^{C})$ for each $D\subseteq \overline{C} \setminus C$, then for all $i\in \{1, \dots , k\}$:
\begin{equation} \label{eap}
\rho(A_{i}^{D}) < \rho(A_{i}^{C}) \quad \text{ for all components } D  \subseteq \overline{C} \setminus C .
\end{equation} 
\item \label{l732} If $C$ satisfies \eqref{eap} for some $i$ then it satisfies it for all $i\in \{1, \dots , k\}$. 
\end{enumerate}
\end{lemma}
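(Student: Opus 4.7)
The plan is to prove both parts at once by producing a strictly positive vector $z$ on $\overline{C}$ that is a common eigenvector of every $A_j^{\overline{C}}$ with eigenvalue $\rho(A_j^C)$, and then reading each desired inequality off from a coordinate-wise strict sub-invariance. First I would record the structural consequences of the connectivity hypothesis: it forces $v\leq w$ and $v\leq_{\{j\}} w$ to coincide for every $j$, so $\sim$ and $\sim_{\{j\}}$ coincide and every $\sim$-component $D\subseteq \overline{C}$ is also a $\sim_{\{j\}}$-component, making each $A_j^D$ an irreducible nonnegative matrix. Moreover no color edge can enter $C$ from $\overline{C}\setminus C$ (its source would be forced into $C$), so writing the components of $\overline{C}$ as $C_1=C, C_2,\dots,C_n$ in an order compatible with the component DAG, every $A_j^{\overline{C}}$ is block triangular with $A_j^C$ as the $C$-block and zero blocks from $\overline{C}\setminus C$ back into $C$. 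The commuting irreducible family $\{A_j^C\}_j$ then has a common strictly positive Perron eigenvector $z^C$ with $A_j^C z^C = \rho(A_j^C)z^C$ for each $j$.

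For the construction of $z$ I would introduce an auxiliary test matrix $B$ commuting with every $A_j^{\overline{C}}$ and satisfying $\rho(B^{C_p}) < \rho(B^C) =: c$ for every $p>1$. In Part 2 the hypothesis supplies this with $B = A_i^{\overline{C}}$. In Part 1 I would take $B = \sum_{j=1}^k A_j^{\overline{C}}$: the common Perron eigenvector on each $C_p$ yields $\rho(B^{C_p}) = \sum_j \rho(A_j^{C_p})$, and the coordinatewise $\rho(A^{C_p}) \lneq \rho(A^C)$ upgrades to the scalar strict inequality $\rho(B^{C_p}) < \rho(B^C)$. Either way $c$ is a simple eigenvalue of $B^{\overline{C}}$ (it is simple in the irreducible $B^C$ and strictly exceeds every $\rho(B^{C_p})$ for $p>1$), so I can extend $z^C$ outward by solving
\begin{equation*}
(c\, 1_{C_p} - B^{C_p})\, z|_{C_p} \;=\; \sum_{q<p} B^{pq}\, z|_{C_q}
\end{equation*}
inductively for $p=2,\dots,n$; the resolvent $(c\, 1_{C_p} - B^{C_p})^{-1}$ is entrywise strictly positive by the Neumann series together with irreducibility of $B^{C_p}$, so $z \geq 0$ and $B^{\overline{C}} z = cz$. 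Simplicity of $c$ combined with commutation forces $A_j^{\overline{C}} z = \lambda_j z$ for scalars $\lambda_j$, and evaluating on the $C$-block gives $\lambda_j = \rho(A_j^C)$ for every $j$. An induction on DAG-distance from $C$ upgrades $z$ to $z>0$: once some $C_q$ with $q<p$ and a direct multi-color edge $C_q\to C_p$ has $z|_{C_q}>0$, the strictly positive resolvent applied to the nonzero vector $B^{pq}\, z|_{C_q}$ makes $z|_{C_p}>0$.

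Finally, fixing $i\in\{1,\dots,k\}$ and $p>1$, the block equation for $A_i^{\overline{C}} z = \rho(A_i^C) z$ reads
\begin{equation*}
A_i^{C_p} z|_{C_p} \;=\; \rho(A_i^C)\, z|_{C_p} \;-\; \sum_{q<p} A_i^{pq}\, z|_{C_q} \;\leq\; \rho(A_i^C)\, z|_{C_p}.
\end{equation*}
The connectivity hypothesis produces a color-$i$ path from $C$ to a chosen vertex of $C_p$; isolating the first edge of that path which enters $C_p$ exhibits some $q<p$ with $A_i^{pq}\neq 0$, and since $z|_{C_q}>0$ the subtracted term is nonzero, making the displayed inequality strict in at least one entry. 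The standard Perron--Frobenius fact that an irreducible nonnegative matrix admitting a positive, strictly sub-invariant eigenvector has spectral radius strictly below the scalar then yields $\rho(A_i^{C_p}) < \rho(A_i^C)$, which is the conclusion of both parts. The main obstacle is Part 1, whose hypothesis is only coordinatewise $\lneq$ and therefore does not by itself make any individual $A_i^{\overline{C}}$ carry a simple Perron structure; the trick of passing to the test matrix $B = \sum_j A_j^{\overline{C}}$ (which commutes with every $A_j$ and whose scalar strict inequality comes directly from the common Perron eigenvectors on each component) is what restores simplicity and lets the common-eigenvector construction run uniformly in both parts.
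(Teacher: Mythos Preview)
Your argument is correct and takes a genuinely different route from the paper. The paper fixes an $i$, uses the connectivity hypothesis to produce a \emph{well-chosen} finite set $F\subseteq\mathbb{N}^{\{i\}}\setminus\{0\}$ in the sense of \cite{C}, and then invokes Lemmas~7.8 and~7.11 and Definition~7.5 of that reference to obtain $\rho(A_F^{D})<\rho(A_F^{C})$ for each component $D\subseteq\overline{C}\setminus C$; since $\rho(A_F^{D})=\sum_{n\in F}\rho(A_i^{D})^{n_i}$ by equation~(7.2) of \cite{C}, the monotonicity of $x\mapsto\sum_{n\in F}x^{n_i}$ transfers this back to $\rho(A_i^{D})<\rho(A_i^{C})$; Part~\ref{l732} is then run in reverse through the same machinery. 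Your approach instead builds a single strictly positive vector $z$ on all of $\overline{C}$ that is simultaneously an eigenvector of every $A_j^{\overline{C}}$ with eigenvalue $\rho(A_j^{C})$, and reads off each strict inequality from strict sub-invariance on the block $C_p$ via the left Perron eigenvector. The key device---choosing an auxiliary commuting matrix $B$ (either $A_i^{\overline{C}}$ or $\sum_j A_j^{\overline{C}}$) whose Perron eigenvalue is simple on $\overline{C}$, so that commutation forces $z$ to be an eigenvector of every $A_j^{\overline{C}}$---is a neat way to unify both parts. Your proof is more self-contained (it uses only classical Perron--Frobenius facts and avoids the external results from \cite{C}); the paper's proof is considerably shorter precisely because it outsources the spectral comparison to that reference. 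One small remark: your strict-positivity induction is phrased as ``induction on DAG-distance from $C$'' with a ``multi-color edge,'' but in Part~\ref{l732}, where $B=A_i^{\overline{C}}$, you need specifically a colour-$i$ edge into $C_p$; you supply exactly this in the next paragraph via the connectivity hypothesis, so the argument goes through---it would just read more cleanly as a straight induction on the index $p$, invoking the colour-$i$ path from $C$ to $C_p$ at that point already.
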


\begin{proof}
Fix an $i$. The condition on the graph implies that we can find a finite set of numbers $F\subseteq \mathbb{N}^{\{i\}}\setminus \{0\}$ such that $A_{F}(v,w):=\sum_{n\in F} A^{n}(v,w)> 0$ if and only if $v\Lambda^{m} w \neq \emptyset$ for some $m\in \mathbb{N}^{k}$. Such a set is called well-chosen in the terminology of \cite{C}, and it follows from Lemma 7.11 and Definition 7.5 in that article that $\rho(A_{F}^{\overline{C}\setminus C}) <\rho(A_{F}^{C})$. Using Lemma 7.8 in \cite{C} on the graph $\Lambda\overline{C}\setminus C$ we get $\rho(A_{F}^{D})<\rho(A_{F}^{C}) $ for each $D \subseteq \overline{C}\setminus C$. Since $\rho(A_{F}^{D})=\sum_{n\in F} \rho(A_{i}^{D})^{n_{i}}$ by equation $(7.2)$ in \cite{C} \eqref{eap} follows.

To prove \ref{l732} assume $C$ satisfies \eqref{eap} for a $i$ and choose $F$ as above for this $i$. Equation $(7.2)$ in \cite{C} gives $\rho(A_{F}^{D})<\rho(A_{F}^{C})$ for all components $D\subseteq \overline{C}\setminus C$, and hence combining Lemma 7.8, Definition 7.5 and Lemma 7.11 in \cite{C} imply that $C$ satisfies the criterion in \ref{l731}.
\end{proof}

To follow the set-up in \cite{FaHR} we consider a finite $k$-graph $\Lambda$ and a $r\in \mathbb{R}^{k}$ satisfying condition $1-5$ from the introduction, and we assume that $K\sqcup L=\{1, \dots, k\}$ is a partition with $r_{i}=\ln(\rho(A_{i}))$ for $i\in K\neq \emptyset$ and $r_{l} > \ln(\rho(A_{l}))$ for $l\in L$. We let $\mathcal{C}_{crit}$ be the components $C$ in $\sim$ with $\ln(\rho(A^{C}_{j}))=r_{j}$ for some $j$, and $\mathcal{C}_{mincrit}$ be the minimal elements in $\mathcal{C}_{crit}$ for the order $\leq$. Notice that condition $4$ and $5$ imply that the condition imposed on $\Lambda$ in Lemma \ref{lap} is satisfied, and that the relations $\leq_{I}$, $I\neq \emptyset$ are all equal.

Let $C\in \mathcal{C}_{mincrit}$ and set $I=\{i: \ln(\rho(A_{i}^{C}))=r_{i} \}$ then $I\neq \emptyset$. For $i\in I$ $C$ satisfies \ref{l732} in Lemma \ref{lap}, so $\rho(A^{D})_{I} \lneq \rho(A^{C})_{I}$ for all $D\subseteq \overline{C} \setminus C$, and by \eqref{eap} then $\rho(A_{j}^{\overline{C}})=\rho(A_{j}^{C}) <e^{r_{j}}$ for $j\notin I$, so $C$ is $(I, 1, r)$-subharmonic. Assume on the other hand that $C$ is a $(I, 1, r)$-subharmonic component for a $I\neq \emptyset$, then for $i\in I$ we have $\ln(\rho(A_{i}^{C}))=r_{i}$, so $C \in \mathcal{C}_{crit}$. If $C\notin \mathcal{C}_{mincrit}$ then there is a $(J, 1, r)$-subharmonic component $D$ with $D\subseteq \overline{C}\setminus C$ and $J \subseteq \{1, \dots, k\}$ not empty. If $l \notin I$ then:
\begin{equation*}
\rho(A_{l}^{D}) \leq \rho(A_{l}^{\overline{C}}) <e^{r_{l}}
\end{equation*}
so then $l \notin J$, giving $J\subseteq I$. Since $\Lambda_{I}$ as an $I$-graph satisfies the criterion of Lemma \ref{lap}, $C$ satisfies the criterion in \ref{l731} for the graph $\Lambda_{I}$ and $D\subseteq \overline{C}^{I}\setminus C$ we get that $\rho(A_{i}^{D})< \rho(A_{i}^{C})$ for all $i\in I$. For $i\in J$ this implies $e^{r_{i}}=\rho(A_{i}^{D})<\rho(A_{i}^{C}) =e^{r_{i}}$, a contradiction. So $\mathcal{C}_{mincrit}$ is the set of $(I, 1, r)$-subharmonic components with $I \neq \emptyset$.

If $\{v\}$ is a $(\emptyset, 1, r)$-subharmonic component, then $\rho(A_{j}^{\overline{\{v\}}})<e^{r_{j}}$ for each $j\in \{1, \dots , k\}$, so $\overline{\{v\}}$ contains no components from $\mathcal{C}_{crit}$, and hence $v\notin \widehat{\mathcal{C}_{crit}}$. If on the other hand $v\notin \widehat{\mathcal{C}_{crit}}$ then $\overline{\{v\}}$ contains no critical components, so $\rho(A_{j}^{\overline{\{v\}}})<e^{r_{j}}$ for each $j\in \{1, \dots , k\}$, implying that $\{v\}$ is $(\emptyset, 1, r)$-subharmonic. 

Comparing Theorem \ref{t58} with Section 7 in \cite{FaHR} we now see that the vertexes and components giving rise to extremal $1$-KMS states for $\alpha^{r}$ are the same in the two expositions. To see that the states also agree it suffices to argue that the corresponding vectors over $\Lambda^{0}$ agree. For $v\notin \widehat{\mathcal{C}_{crit}}$ this follows from comparing the vector defined in Proposition \ref{p53} when considerind $\{v\}$ a $(\emptyset, 1, r)$-subharmonic component with the one constructed in Theorem 6.1 in \cite{aHLRS1}. For $C\in \mathcal{C}_{mincrit}$ define $H\subseteq \Lambda^{0}$ as in Proposition 3.4 in \cite{FaHR}, then the vector $z$ of unit $1$-norm constructed in \cite{FaHR} corresponding to $C$ satisfies $z_{v}=0$ for $v\notin \overline{C}$ and for all $i\in \{1, \dots , k\}$ that:
\begin{equation*}
A_{i}^{\Lambda^{0} \setminus H}z|_{\Lambda^{0} \setminus H}=\rho(A_{i}^{C}) z|_{\Lambda^{0} \setminus H} .
\end{equation*}
For $I:=\{i : r_{i}=\ln(\rho(A_{i}^{C}))\}$ we get $A_{i}z=e^{r_{i}}z$ for $i\in I$, so $z$ is the unique vector $x^{C}$ from Lemma \ref{l52}. Since $\tilde{x}^{C}$ is supported on $\overline{C}=\overline{C}^{I}$ and satisfies $A_{i}\tilde{x}^{C}=e^{r_{i}} \tilde{x}^{C}$, it has to be a scalar of $x^{C}$, so $y^{C}=z$. This proves that the states obtained in \cite{FaHR} are the same as the ones obtained in Theorem \ref{t58}.

\begin{ack}
The author thanks Klaus Thomsen for supervision and discussion. This work was supported by the DFF-Research Project 2 `Automorphisms and Invariants of Operator Algebras', no. 7014-00145B.
\end{ack}

\section*{References}

\bigskip

E-mail address: \textit{johannes@math.au.dk}

\end{document}